\newtheorem{theorem}{Theorem}[section]
\newtheorem{lemma}[theorem]{Lemma}
\newtheorem{claim}[theorem]{Claim}
\theoremstyle{definition}
\newtheorem{definition}[theorem]{Definition}
\theoremstyle{remark}
\newtheorem*{remark}{Remark}
\newenvironment{poc}{\begin{proof}[Proof of claim]}{\end{proof}}
\DeclareMathOperator{\vol}{vol}
\newcommand{\E}{\mathbb{E}}
\newcommand{\I}{\mathbb{I}}
\newcommand{\N}{\mathbb{N}}
\newcommand{\PP}{\mathbb{P}}
\newcommand{\R}{\mathbb{R}}
\newcommand{\wt}{\mathrm{wt}}
\newcommand{\x}{\bm{x}}
\newcommand{\y}{\bm{y}}
\newcommand{\z}{\bm{z}}
\newcommand{\cB}{B}
\newcommand{\cC}{\mathcal{C}}
\newcommand{\cL}{\mathcal{L}}
\newcommand{\cI}{\mathcal{I}}
\newcommand{\cE}{\mathcal{E}}
\newcommand{\eps}{\varepsilon}
\def\Var{\mathop{\rm Var}}
\tikzstyle{p}+=[fill=black, circle, minimum width = 1pt, inner sep =
\tikzstyle{w}+=[fill=white, draw, circle, minimum width = 1pt, inner sep =
\author{	  
Jaehoon Kim\thanks{Department of Mathematical Sciences, KAIST, South Korea. E-mail: {\tt jaehoon.kim@kaist.ac.kr}. Supported by the POSCO Science Fellowship of POSCO TJ Park Foundation, and by the KAIX Challenge program of KAIST Advanced Institute for Science-X.}
\and
	Hong Liu\thanks{Mathematics Institute, University of Warwick, UK. E-mail: {\tt h.liu.9@warwick.ac.uk}. Supported by the UK Research and Innovation Future Leaders Fellowship MR/S016325/1.}
	\and
	Tuan Tran\thanks{
		Discrete Mathematics Group, Institute for Basic Science (IBS), South Korea.
		E-mail: {\tt tuantran@ibs.re.kr}. Supported by the Institute for Basic Science (IBS-R029-Y1).}  
 }
\begin{document}

\date{}

\title{Exponential decay of intersection volume with applications on list--decodability and Gilbert--Varshamov type bound}
\maketitle

\begin{abstract}
	We give some natural sufficient conditions for balls in a metric space to have small intersection. Roughly speaking, this happens when the metric space is (i) expanding and (ii) well-spread, and (iii) a certain random variable on the boundary of a ball has a small tail. As applications, we show that the volume of intersection of balls in Hamming, Johnson spaces and symmetric groups decay exponentially as their centers drift apart. To verify condition (iii), we prove some large deviation inequalities `on a slice' for functions with Lipschitz conditions.
	
	We then use these estimates on intersection volumes to
	\begin{itemize}
	    \item obtain a sharp lower bound on list-decodability of random $q$-ary codes, confirming a conjecture of Li and Wootters; and
	    
	    \item improve the classical bound of Levenshtein from 1971 on constant weight codes by a factor linear in dimension, resolving a problem raised by Jiang and Vardy.
	\end{itemize}
	
	Our probabilistic point of view also offers a unified framework to obtain improvements on other Gilbert--Varshamov type bounds, giving conceptually simple and calculation-free proofs for $q$-ary codes, permutation codes, and spherical codes. Another consequence is a counting result on the number of codes, showing ampleness of large codes. 
\end{abstract}


\section{Introduction}

A well-known fact in convex geometry states that the volume of the intersection of two Euclidean balls of the same radius in $\R^n$ is exponentially (in $n$) smaller than the two given balls. 
It can be proved by observing that the intersection is contained in a ball of smaller radius centered at the mid-point of the centers of the two original balls. This simple proof, however, does not extend to some discrete settings, as the intersection might no longer be enclosed by a ball of smaller radius. One such example is that of the Hamming space over a finite alphabet, one of the most studied space in theoretical computer science and information theory. Indeed, take the 
discrete cube $\{0,1\}^n$ endowed with the Hamming metric and let $k,r\in\N$ with $2k\le r$. Consider the two radius-$r$ Hamming balls $A$ and $B$ centered at $a=0^n$ and $b=1^{2k}0^{n-2k}$ respectively. Take a mid-point $c$ of $a$ and $b$, say by symmetry $c=1^k0^{n-k}$. Then the point $x=0^k1^r0^{n-r-k}$ lies in the intersection $A\cap B$, but it is of Hamming distance $r+k$ from the chosen mid-point $c$.

The expression of the intersection volume in such discrete metric spaces can usually be written out explicitly. The problem is that such expression is often cumbersome and it is a grueling task to estimate. To illustrate, let us consider the $q$-ary Hamming space $\{0,1,\ldots,q-1\}^n$. Denote by $\vol_q(n,r)$ the volume of a radius-$r$ $q$-ary Hamming ball, and by $\vol_q(n,r;k)$ the volume of the intersections of two radius-$r$ balls whose centers are distance $k$ apart. It is not hard to show that
the intersection volume is
\begin{equation}\label{eq:int-q}
\vol_q(n,r;k)=\sum_{i+j\le k}\frac{k!}{i!j!(k-i-j)!}(q-2)^{k-i-j}\sum_{t\le t_{\max}}\binom{n-k}{t}(q-1)^t,
\end{equation}
where $t_{\max}:=\min(n-k,r-k+i,r-k+j)$. 

Estimating the asymptotics of the right hand side above is not at all a straightforward task. Indeed, when $k$ and $r$ are linear in the dimension $n$, Jiang and Vardy~\cite{JV04} studied the binary case $q=2$ with the help of computer. Later, Vu and Wu~\cite{VW05} estimated the general $q$-ary case for all $q\ge 2$ using a discrete analog of Lagrange’s multiplier and some inequalities on entropy functions; their proof, though computer-free and much cleaner, is still rather involved.

\smallskip

Consider the following alternative probabilistic approach to estimate the intersection volume. Let $A,B$ be two radius-$r$ Euclidean balls centered at $a,b\in\R^n$ respectively. Let $\bm{x}$ be a uniform random point drawn from $A$, then the ratio of the volume of the intersection $A\cap B$ and the volume of the radius-$r$ ball is precisely the probability that $\bm{x}$ lies in $A\cap B$, that is, $\frac{\vol(A\cap B)}{\vol(A)}=\PP(\x\in A\cap B)$. We can then bound the probability $\PP(\x\in A\cap B)$ using for instance Talagrand's celebrated deviation inequality~\cite{Tal} for functions with Lipschitz condition with respect to both $\ell_1^n$ and $\ell_2^n$ norms. We refer the readers to~\cite{BL, Mau}
for related results on concentration of measure.

We use this probabilistic approach to give some natural sufficient conditions that guarantee small intersection of balls in a metric space. The advantage of this approach is that it can be implemented in the discrete settings, provided that appropriate concentration inequalities can be proved. 


\subsection{Sufficient conditions for small intersection}

To state our result, we need some notations. Let $(X,\mathsf{d})$ be a finite metric space with $\mathsf{d}$ taking values in $\N\cup\{0\}$.
For $a\in X$ and $r\in \N$, we write $B(a,r)$ for the ball of radius $r$ around $a$ and write $S(a,r)$ for the shell of all points of distance exactly $r$ from $a$. We say the metric space $(X,\mathsf{d})$ has \emph{exponential growth at radius $r$ with rate $c$} if for every $a\in X$ and every $t<r$,
$$\frac{\vol(B(a,r-t))}{\vol(B(a,r))} \le 2e^{-ct}.$$
For $a,b\in X$, let $\ell_{a,b}:~X\rightarrow \R$ be given by
\begin{equation}\label{defn:f-slice}
\ell_{a,b}(x)=\mathsf{d}(x,b)-\mathsf{d}(x,a).
\end{equation}
Given $r,k\in\N$ and $\alpha>0$, we say that the metric space $(X,\mathsf{d})$ is \emph{$(r,k)$-dispersed} with constant $\alpha$ if for any $a,b\in X$ with 
$\mathsf{d}(a,b)=k$ 
and any $0\le i\le \alpha k$, 
$$\E_{\bm{x}\sim S(a,r-i)}\Big[\ell_{a,b}(\bm{x})\Big]\ge 2\alpha k,$$
where $\bm{x}$ is a uniform random point of $S(a,r-i)$. 

A real-valued random variable $\bm{X}$ is \emph{$K$-subgaussian} if for any $t\ge 0$,\[
    \PP(|\bm{X}|\ge t) \le 2\exp\left(-t^2/K\right).
    \]
    
Our result reads as follows. 

\begin{theorem}\label{thm:suff-cond-small-int}
  Let $(X,\mathsf{d})$ be a finite metric space with $\mathsf{d}$ taking values in $\N\cup\{0\}$ and let $k,r\in\N$.  
  Suppose  
  \begin{itemize}
      \item[\rm (A1)] $(X,\mathsf{d})$ has exponential growth at radius $r$ with 
      rate $c>0$; 
      
      \item[\rm (A2)] $(X,\mathsf{d})$ is $(r,k)$-dispersed with constant $\alpha>0$;
   
      \item[\rm (A3)] For any $a,b\in X$ with $\mathsf{d}(a,b)=k$ and any $0\le i\le \alpha k$, $\ell_{a,b}(\bm{x})-\E \ell_{a,b}(\bm{x})$ is $K$-subgaussian, where $\ell_{a,b}$ is as in~\eqref{defn:f-slice} and $\bm{x}$ is drawn uniformly from $S(a,r-i)$.
  \end{itemize}
  Then, for any $a,b\in X$ with $\mathsf{d}(a,b)=k$, 
  $$\frac{\vol(B(a,r)\cap B(b,r))}{\vol(B(a,r))}=2e^{-\Omega_{c,\alpha}(1)\cdot (k+k^2/K)}.$$
\end{theorem}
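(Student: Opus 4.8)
The plan is to carry out the probabilistic argument sketched in the introduction. Let $\x$ be a uniformly random point of $B(a,r)$, so that
$$\frac{\vol(B(a,r)\cap B(b,r))}{\vol(B(a,r))}=\PP\big(\mathsf{d}(\x,b)\le r\big),$$
and it remains to bound this probability. Set the cutoff $t=\ceil{\alpha k}$ and split according to whether $\x$ lies deep inside $B(a,r)$ or near its boundary:
$$\PP\big(\mathsf{d}(\x,b)\le r\big)\le \PP\big(\mathsf{d}(\x,a)\le r-t\big)+\PP\big(\mathsf{d}(\x,b)\le r,\ \mathsf{d}(\x,a)> r-t\big).$$

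The first term equals $\vol(B(a,r-t))/\vol(B(a,r))\le 2e^{-ct}\le 2e^{-c\alpha k}$ by hypothesis (A1). For the second term, observe that on $\{\mathsf{d}(\x,a)>r-t\}$ we have $\mathsf{d}(\x,a)=r-i$ for some integer $0\le i\le\alpha k$, and then $\mathsf{d}(\x,b)\le r$ forces $\ell_{a,b}(\x)=\mathsf{d}(\x,b)-\mathsf{d}(\x,a)\le i\le\alpha k$. Conditioning on $\mathsf{d}(\x,a)=r-i$ makes $\x$ uniform on $S(a,r-i)$, where (A2) gives $\E[\ell_{a,b}(\x)]\ge 2\alpha k$; hence $\{\ell_{a,b}(\x)\le\alpha k\}$ requires $\ell_{a,b}(\x)$ to lie at least $2\alpha k-\alpha k=\alpha k$ below its mean, and by (A3) the centred variable $\ell_{a,b}(\x)-\E[\ell_{a,b}(\x)]$ on this slice is $K$-subgaussian, so this conditional probability is at most $2e^{-(\alpha k)^2/K}=2e^{-\alpha^2k^2/K}$. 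Since this bound is uniform over the admissible $i$, summing the (at most one per shell) contributions and using that the shell probabilities sum to at most $1$ shows the second term is at most $2e^{-\alpha^2k^2/K}$. Combining,
$$\frac{\vol(B(a,r)\cap B(b,r))}{\vol(B(a,r))}\le 2e^{-\alpha^2k^2/K}+2e^{-c\alpha k},$$
which, via $e^{-A}+e^{-B}\le 2e^{-\min(A,B)}$ and a comparison of $\min(\alpha^2k^2/K,\,c\alpha k)$ against $k+k^2/K$, takes the stated form $2e^{-\Omega_{c,\alpha}(1)\cdot(k+k^2/K)}$.

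I expect the main obstacle to be the second term: converting the ``expectation on a slice'' bound (A2) into a genuine tail estimate, which is precisely where the subgaussian hypothesis (A3) enters. The subtle bookkeeping point is that the cutoff $t=\ceil{\alpha k}$ must be tied to the parameter $\alpha k$ appearing in (A2)--(A3), so that on every shell $S(a,r-i)$ with $\mathsf{d}(\x,a)>r-t$ the forced downward deviation is bounded below by $\alpha k$ uniformly in $i$; with a different cutoff this uniformity would fail. Everything else is routine — the first term is a one-line use of exponential growth, the reduction to shells is the layer-cake decomposition of a ball, and the final rearrangement into a single exponent is elementary (with the trivial case $\ceil{\alpha k}\ge r$, where $B(a,r-t)$ is a point or empty, checked separately).
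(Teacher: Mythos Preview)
Your proposal is correct and follows essentially the same argument as the paper: interpret the ratio probabilistically, split off the deep-interior contribution (handled by (A1)), condition on shells $S(a,r-i)$ for the near-boundary part, and use (A2)--(A3) to bound the deviation probability on each shell uniformly by $2e^{-\alpha^2 k^2/K}$ before summing against the shell weights. The only cosmetic differences are your explicit choice $t=\lceil\alpha k\rceil$ and your use of the event $\{\ell_{a,b}(\x)\le\alpha k\}$ in place of the paper's $\{\ell_{a,b}(\x)\le i\}$; both lead to the same two-term estimate $2e^{-c\alpha k}+2e^{-\alpha^2 k^2/K}$.
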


The merit of \cref{thm:suff-cond-small-int} is its formulation. The conditions (A1)--(A3) are quite natural; they are inspired by properties of the Euclidean space.
By our result, showing that the intersection volume is small then amounts to verifying these conditions, which are more manageable. For instance, using~\cref{thm:suff-cond-small-int}, we can get a conceptually simple and \emph{calculation-free} proof that the intersection volume in~\eqref{eq:int-q} is exponentially small, i.e.~$\frac{\vol_q(n,pn;k)}{\vol_q(n,pn)} \le e^{-\Omega(k)}$, for the optimal range $0<p<1-1/q$ and all $1\le k \le n$ (\cref{lem:volume cube}). It is important that the exponential bound holds for not just when $k=\Omega(n)$, but for all $k$, which is needed in some applications, e.g. the tightness on list-decoding capacity theorem (\cref{thm:list size}).

To illustrate the power of \cref{thm:suff-cond-small-int}, apart from the Hamming cube example above, we shall apply it to Johnson space (\cref{lem:volume constant-weight}) and permutation group (\cref{lem:volume permutation}). 
Such estimates on the intersection volume of balls are useful for various problems. We will use them in~\cref{sec:application} to obtain results on list-decodability of $q$-ary random codes with rate just below the limiting rate, and improvements on Gilbert--Varshamov type bounds for constant weight codes, $q$-ary codes, permutation codes and spherical codes, and the corresponding counting results.

\smallskip

In order to apply~\cref{thm:suff-cond-small-int}, it is not hard to check that the discrete metric spaces we consider have the exponential growth and they are well-dispersed. To verify the third condition that the centered random variable $\ell_{a,b}(\bm{x})-\E \ell_{a,b}(\bm{x})$ is subgaussian in our applications, we prove some concentration inequalities for Lipschitz functions defined on `slices' of the space, see~\cref{lem:slice-concentration-q-ary,lem:slice-concentration-perm}.

\medskip

\noindent\textbf{Notations.} Before discussing the applications in details, let us review the terminology that will be used throughout the paper.
A \emph{code} over a finite alphabet $\Sigma$ is simply a subset of $\Sigma^n$; the number $n$ is referred to as the \emph{length} of the code. The elements of the code are called \emph{codewords}. If $|\Sigma|=q$, the code is called $q$-ary code, with the term binary used for the case $q=2$. We say that the code has \emph{rate} $R$ if the number of codewords is $|\Sigma|^{Rn}$.  
Given two words $x=(x_1,\ldots,x_n)$ and $y=(y_1,\ldots,y_n)$ in $\Sigma^n$, the \emph{Hamming distance} $\Delta(x,y)$ between $x$ and $y$ is the number of coordinates $i$ in which $x_i$ and $y_i$ differ.
For a word $x$ we denote by $x_i$ the value of its $i$-th coordinate. For $x\in\{0,1,\ldots,q-1\}^n$, we denote its \emph{weight}, which is the number of non-zero entries in $x$, by $\wt(x)$. The \emph{Johnson distance} between two binary words $x,y\in \{0,1\}^n$ of the same weight is half of their Hamming distance.
The $q$-ary entropy function $h_q\colon [0,1]\rightarrow \R$ is
\[
h_q(x)=x\log_q(q-1)-x\log_qx-(1-x)\log_q(1-x).
\]
We generally use boldface letters for random variables. Given a finite set $A$, we write $\x\sim A$ for a discrete random variable $\x$ chosen uniformly from $A$.


\section{Applications}\label{sec:application}

\subsection{List decoding of random codes}
One of the main goals of the theory of error-correcting codes is to understand the trade-off between the rate of a code and the fraction of errors the code can tolerate during transmission over a noisy channel. There are two natural error models: Hamming’s adversarial noise model, and Shannon’s stochastic noise model. Channels in Shannon's world can flip each transmitted bit with certain probability, independently of other bits, while channels in Hamming's world can corrupt the codeword arbitrarily, subject only to a bound on the total number of errors. 

There is a gap between Hamming and Shannon's world: one can correct twice as many errors in Shannon's world. We refer the reader to \cite{GRS19} for a thorough comparison. List decoding, which was introduced by Elias \cite{Eli57} and Wonzencraft \cite{Woz58} in the late 1950's, can be used to bridge the gap.  In list decoding we give up unique decoding, allowing decoder to output a list of all codewords that are within Hamming distance $pn$ from the received word. Thus, if at most $pn$ errors occur, the list will include the correct codeword. 
Formally, we say that a $q$-ary code $\cC \subset \Sigma^n$ is \emph{$(p,L)$-list decodable} if any Hamming ball of radius $pn$ in $\Sigma^n$ contains at most $L$ codewords.

List decoding has three important parameters: the rate $R$ of the code, the error fraction $p$, and the list size $L$. A fundamental question in list decoding is to determine the feasible region of $(R,p,L)$.  Despite significant efforts, a full description remains elusive. In 1981, Zyablov and Pinsker \cite{ZP81} proved the list-decoding capacity theorem, thus giving a partial solution to the above question.

\begin{theorem}[Zyablov and Pinsker]
\label{thm:capacity}
Let $q\ge 2, 0<p<\frac{q-1}{q}$, and $\eps>0$.
\begin{itemize}
    \item[1.] There exist $q$-ary codes of rate $1-h_q(p)-\eps$ that are
$\left(p,\lceil\frac{1}{\eps}\rceil\right)$-list decodable.
    \item[2.] Any $q$-ary code of rate $1-h_q(p)+\eps$ that is $(p,L)$-list decodable must have $L\ge q^{\Omega(\eps n)}$.
\end{itemize}
\end{theorem}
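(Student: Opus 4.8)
The plan is to prove both parts by the classical probabilistic method; the intersection-volume estimates of \cref{thm:suff-cond-small-int} play no role here (they enter only in the sharper list-size results discussed in the introduction). The one ingredient used repeatedly is the standard entropy estimate $\vol_q(n,pn)=q^{(h_q(p)+o(1))n}$, whose error term is $O(n^{-1}\log n)$ and is therefore dominated by any fixed positive constant in the exponent once $n$ is large. For Part~1, put $L=\lceil 1/\eps\rceil$ and $M=2\lceil q^{(1-h_q(p)-\eps)n}\rceil$, and sample $\x_1,\dots,\x_M\in\Sigma^n$ independently and uniformly. Call an $(L+1)$-subset $\{i_1,\dots,i_{L+1}\}\subseteq[M]$ \emph{bad} if $\x_{i_1},\dots,\x_{i_{L+1}}$ are pairwise distinct and lie in a common radius-$pn$ Hamming ball, and let $N$ count the bad subsets. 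A union bound over the at most $q^n$ possible centres gives $\PP(\text{a fixed subset is bad})\le q^n(\vol_q(n,pn)/q^n)^{L+1}$, hence
\begin{equation*}
\E N\le\binom{M}{L+1}q^n\Big(\frac{\vol_q(n,pn)}{q^n}\Big)^{L+1}\le q^{(1-\eps(L+1))n+o(n)}=o(1),
\end{equation*}
since $\eps(L+1)-1>0$ is a fixed constant. Moreover the expected number of coinciding pairs among the $\x_i$ is at most $M^2q^{-n}=o(M)$ because $1-h_q(p)-\eps<1$. So some outcome has $N=0$ and at least $M/2\ge q^{(1-h_q(p)-\eps)n}$ distinct codewords; restricting to any $q^{(1-h_q(p)-\eps)n}$ of these distinct words yields a code of rate $1-h_q(p)-\eps$ that is $(p,L)$-list decodable.

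For Part~2, let $\cC\subseteq\Sigma^n$ be $(p,L)$-list decodable of rate $1-h_q(p)+\eps$, so $|\cC|=q^{(1-h_q(p)+\eps)n}$, and pick $\z\in\Sigma^n$ uniformly at random. Since $\PP(\Delta(\z,c)\le pn)=\vol_q(n,pn)/q^n$ for each fixed $c\in\cC$, linearity of expectation gives
\begin{equation*}
\E\big[\,|\cC\cap B(\z,pn)|\,\big]=|\cC|\cdot\frac{\vol_q(n,pn)}{q^n}=q^{(\eps-o(1))n}.
\end{equation*}
Hence some radius-$pn$ ball contains at least $q^{(\eps-o(1))n}=q^{\Omega(\eps n)}$ codewords, which forces $L\ge q^{\Omega(\eps n)}$.

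The only genuine work is the entropy estimate for $\vol_q(n,pn)$ (routine) and, in Part~1, the minor bookkeeping that turns the random multiset into an honest code of the prescribed rate; the crucial numerical point is that $L=\lceil 1/\eps\rceil$ is exactly the threshold at which the factor $q^n$ counting ball centres is overcome, which is precisely what makes the positive and negative halves of the theorem meet. I do not anticipate any substantial obstacle.
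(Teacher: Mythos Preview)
The paper does not give its own proof of this theorem: it is quoted as a classical result of Zyablov and Pinsker, and the only commentary offered is the sentence ``The existential part of \cref{thm:capacity} was achieved by demonstrating that a random code of rate $1-h_q(p)-\eps$ is $(p,\lceil\frac{1}{\eps}\rceil)$-list decodable with high probability.'' Your argument is precisely this standard probabilistic proof, and it is correct in both parts; there is nothing further to compare.
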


\cref{thm:capacity} establishes the optimal trade-off between the rate and the error fraction for list decoding. In particular, it shows that the list decoding capacity is $1-h_q(p)$, which matches the capacity of Shannon's model.

The existential part of \cref{thm:capacity} was achieved by demonstrating that a random code of rate $1-h_q(p)-\eps$ is $(p,\lceil\frac{1}{\eps}\rceil)$-list decodable with high probability. Rudra \cite{Rud11} proved that this result is best possible up to a constant factor, in the sense that a random code of rate $1-h_q(p)-\eps$ requires $L=\Omega_{p,q}(1/\eps)$. In \cite{GN14}, Guruswami and Narayanan provided a more direct proof of Rudra's result.
For binary codes, Li and Wootters \cite{LW21} recently sharpened the
argument of Guruswami and Narayanan to show that the list size of $1/\eps$ in \cref{thm:capacity} is tight even in the leading constant factor:

\begin{theorem}[Li and Wootters]
\label{thm:binary list-size}
For any $p\in (0,1/2)$ and $\eps>0$, there exist $\gamma
_{p,\eps}=\exp\left(-\Omega_{p}\left(\frac{1}{\eps}\right)\right)$ and $n_{p,\eps}\in \N$ such that for all $n\ge n_{p,\eps}$, a random code $\bm{\cC}\subseteq \{0,1\}^n$ of rate $R=1-h(p)-\eps$ is with probability $1-\exp(-\Omega_{p,\eps}(n))$ not $(p,\frac{1-\gamma_{p,\eps}}{\eps}-1)$-list decodable. \end{theorem}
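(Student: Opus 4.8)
The plan is a second moment argument over the choice of Hamming ball. Fix $\ell := \lfloor (1-\gamma_{p,\eps})/\eps\rfloor$, where $\gamma_{p,\eps}$ will be a constant of the form $\exp(-\Omega_p(1/\eps))$ specified below; since $\ell > (1-\gamma_{p,\eps})/\eps - 1$, it suffices to show that, with probability $1 - \exp(-\Omega_{p,\eps}(n))$, some radius-$pn$ Hamming ball contains at least $\ell$ codewords. Model $\bm{\cC}$ in the usual way as $M := 2^{Rn}$ independent uniform words of $\{0,1\}^n$; put $V := \vol_2(n,pn)$, $V_k := \vol_2(n,pn;k)$, $\mu := MV/2^n$ and $\mu_k := MV_k/2^n = \mu\cdot V_k/V$; for $z\in\{0,1\}^n$ let $A_z$ be the event $|B(z,pn)\cap\bm{\cC}|\ge\ell$, and set $X := \sum_z \mathbbm{1}[A_z]$. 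Throughout we assume $\eps$ is below a constant $\eps_0(p)>0$, which collects a handful of smallness requirements (such as $\eps < 1-\log_2(1+e^{-c''})$ and $\gamma_{p,\eps} < 1/2$, with $c''$ as below) and excludes only the uninteresting regime $1/\eps = O(1)$, where the asserted list size is $O(1)$ and the statement reduces to an elementary minimum-distance computation handled separately.

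First I would compute the first moment. Since $|B(z,pn)\cap\bm{\cC}|$ is $\mathrm{Bin}(M,V/2^n)$ with mean $\mu = 2^{(R+h(p)-1)n+o(n)} = 2^{-\eps n+o(n)}\to 0$, a routine Poisson-tail estimate gives $\Pr[A_z] = (1+o(1))\mu^\ell/\ell!$ with the error exponentially small, and by translation invariance this is independent of $z$; hence $\E X = (1+o(1))\,2^n\mu^\ell/\ell! = 2^{(1-\ell\eps)n+o(n)}/\ell!$. As $\ell\le(1-\gamma_{p,\eps})/\eps$ we have $1-\ell\eps\ge\gamma_{p,\eps}$, so $\E X\ge 2^{\gamma_{p,\eps}n+o(n)}/\ell!$, which exceeds $2^{\gamma_{p,\eps}n/2}$ once $n\ge n_{p,\eps}$ is large enough relative to $\gamma_{p,\eps}$ and $\ell=O(1/\eps)$ — exponentially large.

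The crux is bounding $\Var(X)\le \E X + \sum_{z\ne z'}\bigl(\Pr[A_zA_{z'}]-\Pr[A_z]\Pr[A_{z'}]\bigr)$, where by translation invariance the double sum equals $2^n\sum_{k=1}^n\binom nk\bigl(\Pr[A_{z_0}A_{z_k}]-\Pr[A_{z_0}]^2\bigr)$ for $z_k$ at distance $k$ from $z_0$. Union-bounding $A_{z_0}A_{z_k}$ over the pair of witnessing $\ell$-subsets of codewords, split by the size $j$ of their overlap, yields the clean inequality
\[
\Pr[A_{z_0}A_{z_k}]\ \le\ \sum_{j=0}^{\ell}\frac{\mu_k^{\,j}\,\mu^{\,2(\ell-j)}}{j!\,\bigl((\ell-j)!\bigr)^2}.
\]
The $j=0$ term is $\le\mu^{2\ell}/(\ell!)^2 = (1+o(1))\Pr[A_{z_0}]^2$, contributing only $\exp(-\Omega_{p,\eps}(n))(\E X)^2$ after summing over $k$. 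For $j\ge 1$ one invokes the intersection-volume estimate $V_k/V\le e^{-c''k}$, valid for all $1\le k\le n$ with $c''=c''(p)>0$ (the binary case of \cref{lem:volume cube}); together with $\sum_k\binom nk e^{-jc''k} = (1+e^{-jc''})^n$ this gives $\sum_k\binom nk\mu_k^{\,j}\le\mu^j(1+e^{-jc''})^n$, and a short computation shows the $j$-th contribution to the double sum, divided by $(\E X)^2$, is at most $\tfrac{(\ell!)^2}{j!((\ell-j)!)^2}\,2^{\,n\psi(j)+o(n)}$, where $\psi(j):=\log_2(1+e^{-jc''})+j\eps-1$. The function $\psi$ decreases and then increases in $j$, so $\max_{1\le j\le\ell}\psi(j)=\max\{\psi(1),\psi(\ell)\}$. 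One has $\psi(1)=\eps-\bigl(1-\log_2(1+e^{-c''})\bigr)<0$ by the standing assumption on $\eps$; and for $j=\ell$, using $\ell\eps\le 1-\gamma_{p,\eps}$ and $\ell\ge\tfrac1{2\eps}-1$ (so $e^{-\ell c''}\le e^{c''}e^{-c''/(2\eps)}$) gives $\psi(\ell)\le -\gamma_{p,\eps}+\tfrac{e^{c''}}{\ln 2}e^{-c''/(2\eps)}$, which the choice $\gamma_{p,\eps}:=\tfrac{2e^{c''}}{\ln 2}e^{-c''/(2\eps)}=\exp(-\Omega_p(1/\eps))$ renders $\le -\gamma_{p,\eps}/2<0$. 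Summing the finitely many terms, $\Var(X)\le\exp(-\Omega_{p,\eps}(n))(\E X)^2$, and Chebyshev then gives $\Pr[\bm{\cC}\text{ is }(p,(1-\gamma_{p,\eps})/\eps-1)\text{-list decodable}]\le\Pr[X=0]\le\Var(X)/(\E X)^2\le 1/\E X+\exp(-\Omega_{p,\eps}(n))=\exp(-\Omega_{p,\eps}(n))$.

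I expect the main obstacle to be the variance bound, and within it the term $j=\ell$: this is exactly where one needs the intersection-volume estimate to hold for all $k$, not merely for $k=\Omega(n)$. If $V_k/V$ were only known to be $e^{-\Omega(k)}$ for $k=\Omega(n)$ — and merely $\le 1$ for small $k$ — then $\sum_k\binom nk\mu_k^{\,\ell}$ could only be bounded by $\mu^\ell 2^n$, making the $j=\ell$ contribution of order $2^{(1-\ell\eps)n+o(n)}=2^{\gamma_{p,\eps}n+o(n)}$, i.e.\ comparable to $\E X$ rather than $o((\E X)^2)$, so the second moment argument would collapse. The same term is what forces the list size to be pushed below $1/\eps$ by the quantity $\gamma_{p,\eps}=\exp(-\Omega_p(1/\eps))$, which accounts for the precise shape of the bound.
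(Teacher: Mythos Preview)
Your proposal is correct and follows the same second-moment strategy as the paper's proof of the $q$-ary generalisation (\cref{thm:list size}) in \cref{sec:list-decoding}, specialised to $q=2$: build a witness random variable, apply Chebyshev, and control the covariance terms by summing over the overlap size between two witnessing lists, invoking the intersection-volume bound $\vol_2(n,pn;k)/\vol_2(n,pn)\le e^{-ck}$ for \emph{all} $k$ from \cref{lem:volume cube}. You also correctly identify the large-overlap term as the one forcing $\gamma_{p,\eps}=\exp(-\Omega_p(1/\eps))$.

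The implementations differ in two minor respects. You take the indicator that a ball contains at least $\ell$ codewords, whereas the paper sums $\bm{\I}(a,X)$ over pairs (center, ordered $L$-list); this is cosmetic. More interestingly, you treat every overlap size $j\ge1$ with the single intersection-volume bound and use the convexity of your exponent $\psi(j)$ to reduce to the endpoints $j=1$ and $j=\ell$. The paper instead splits at $\ell_0=\tfrac{1-h_q(p)}{2\eps}$ and, for small overlaps, uses an additional bound $\PP(\cE_\ell)\le\mu^{2L-\ell+1}$ (the first estimate in \cref{lem:covariance}, obtained by conditioning on $\x_1$ rather than on $(\bm a,\bm b)$). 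Your convexity shortcut is tidier but needs $\eps<1-\log_2(1+e^{-c''})$ to make $\psi(1)<0$; the paper's two-bound scheme works for all $\eps>0$ uniformly. Your remark that the excluded regime ``reduces to an elementary minimum-distance computation'' is slightly glib---one still needs a short (but genuinely easier) second-moment/birthday argument to place $O(1)$ codewords in a single ball---so you should either carry out that side case or note that the paper's $\ell_0$-split removes the restriction altogether.
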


Li and Wootters \cite{LW21} conjectured that \cref{thm:binary list-size} generalizes to $q$-ary codes, for any $q\ge 3$. To quote their words, \textquote{our arguments only work for binary codes and do not extend to larger alphabets.} 

Our first application, making use of the intersection volume estimate, confirms their conjecture, showing that the list size $1/\eps$ in list decoding capacity theorem is optimal for all $q\ge 2$.

\begin{theorem}\label{thm:list size}
Let $q\ge 2$. Then for any $p\in (0,\frac{q-1}{q})$ and $\eps>0$, there exist $\gamma
_{p,q,\eps}=\exp\left(-\Omega_{p,q}\left(\frac{1}{\eps}\right)\right)$ and $n_{p,q,\eps}\in \N$ such that for all $n\ge n_{p,q,\eps}$, a random code $\cC\subseteq [q]^n$ of rate $R=1-h_q(p)-\eps$ is with probability $1-\exp(-\Omega_{p,q,\eps}(n))$ not $(p,\frac{1-\gamma_{p,q,\eps}}{\eps}-1)$-list decodable.
\end{theorem}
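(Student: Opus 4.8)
The plan is to run a second moment argument over the family of radius-$pn$ balls containing many codewords, using the exponential decay of the $q$-ary intersection volume (\cref{lem:volume cube}, valid for \emph{every} $k$) to control the correlations; this decay is precisely the ingredient that was unavailable for $q\ge 3$ in \cite{LW21}. We may assume $\eps$ is below some $\eps_0=\eps_0(p,q)$. Fix $\gamma=\gamma_{p,q,\eps}=\exp(-\Theta_{p,q}(1/\eps))$ (specified below) and set $L+1=(1-\gamma)/\eps$, rounding as necessary. Model $\bm{\cC}\subseteq[q]^n$ as $M:=q^{Rn}$ i.i.d.\ uniform codewords (repetitions are immaterial). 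For $z\in[q]^n$ put $\bm{N}_z:=|\bm{\cC}\cap B(z,pn)|$ and $\bm{Z}:=\sum_{z\in[q]^n}\mathbbm{1}\{\bm{N}_z\ge L+1\}$. Since $\bm{Z}\ge1$ implies that $\bm{\cC}$ is not $(p,L)$-list decodable, it suffices to prove $\PP(\bm{Z}=0)=\exp(-\Omega_{p,q,\eps}(n))$.

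\emph{First moment.} With $p_0:=\vol_q(n,pn)/q^n=q^{(h_q(p)-1)n-O(\log n)}$ we have $Mp_0=q^{-\eps n-O(\log n)}\to0$, so $\bm{N}_0$ has a tiny mean and the standard estimate $\PP(\bm{N}_0\ge L+1)=(1+o(1))\binom{M}{L+1}p_0^{L+1}=q^{-\eps(L+1)n-O(\log n)}$ holds; hence, by symmetry,
$$\E[\bm{Z}]=q^n\,\PP(\bm{N}_0\ge L+1)=q^{(1-\eps(L+1))n-O(\log n)}=q^{\gamma n-O(\log n)}\longrightarrow\infty .$$

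\emph{Second moment.} Splitting off the diagonal and grouping off-diagonal pairs $(z,z')$ by $k=\Delta(z,z')$,
$$\E[\bm{Z}^2]=\E[\bm{Z}]+q^n\sum_{k=1}^{n}\binom{n}{k}(q-1)^k\,\PP\big(\bm{N}_u\ge L+1,\ \bm{N}_v\ge L+1\big),$$
for fixed $u,v$ with $\Delta(u,v)=k$. I would decompose the joint event by the number $j\ge0$ of codewords in $B(u,pn)\cap B(v,pn)$. The part $j=0$ needs $L+1$ codewords in each of the two disjoint ``lunes'', contributing at most $\binom{M}{L+1}^2p_0^{2(L+1)}$ to the probability above, hence at most $q^{2n}\binom{M}{L+1}^2p_0^{2(L+1)}=(1+o(1))\E[\bm{Z}]^2$ in total. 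For $1\le j\le L+1$, a witnessing configuration uses $j$ codewords in the intersection plus $L+1-j$ further ones in each lune, so, writing $p_{00}(k):=\vol_q(n,pn;k)/q^n$, its probability is at most $O_L\big(M^{2(L+1)-j}p_{00}(k)^{j}p_0^{2(L+1-j)}\big)\le O_L\big(M^{2(L+1)-j}p_0^{2(L+1)-j}e^{-cjk}\big)$, where we used $p_{00}(k)\le e^{-ck}p_0$ for \emph{all} $1\le k\le n$ with $c=c(p,q)>0$, from \cref{lem:volume cube}. Summing $\sum_{k\ge1}\binom nk(q-1)^ke^{-cjk}\le(1+(q-1)e^{-cj})^n$ and inserting $Mp_0=q^{-\eps n-o(n)}$, $\E[\bm{Z}]^2=q^{2(1-\eps(L+1))n-o(n)}$, the $j$-th part is at most $q^{o(n)}\big(q^{\eps j-1}(1+(q-1)e^{-cj})\big)^n\E[\bm{Z}]^2$, so
$$\E[\bm{Z}^2]\le\Big(1+o(1)+q^{o(n)}\sum_{j=1}^{L+1}\big(q^{\eps j-1}(1+(q-1)e^{-cj})\big)^n\Big)\E[\bm{Z}]^2+\E[\bm{Z}].$$
It remains to ensure $q^{\eps j-1}(1+(q-1)e^{-cj})<1$, i.e.\ $\eps j<\phi(j):=1-\log_q\!\big(1+(q-1)e^{-cj}\big)$, for all $1\le j\le L+1$. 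Now $\phi$ is concave and increasing with $\phi(0)=0$, $\lim_{j\to\infty}\phi(j)=1$ and $\phi'(0)=\tfrac{c(q-1)}{q\ln q}$; hence, provided $\eps<\phi'(0)$ (which holds for $\eps<\eps_0(p,q)$), the line $j\mapsto\eps j$ stays strictly below $\phi$ up to the unique positive solution $j^\ast$ of $\eps j=\phi(j)$, and we need only $L+1\le j^\ast$, equivalently $\phi((1-\gamma)/\eps)\ge1-\gamma$. Since $\phi(j)\ge1-\gamma$ whenever $(q-1)e^{-cj}\le q^\gamma-1$, a short computation shows this holds with room for $\gamma=\exp(-\Theta_{p,q}(1/\eps))$, and in fact yields $\phi(j)-\eps j\ge\eta$ on $[1,L+1]$ for some $\eta=\eta_{p,q,\eps}>0$. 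Therefore $\Var(\bm{Z})\le q^{-\Omega_{p,q,\eps}(n)}\E[\bm{Z}]^2+\E[\bm{Z}]$, and Chebyshev's inequality gives $\PP(\bm{Z}=0)\le\Var(\bm{Z})/\E[\bm{Z}]^2\le q^{-\Omega_{p,q,\eps}(n)}+q^{-\gamma n+o(n)}=\exp(-\Omega_{p,q,\eps}(n))$, as required.

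\emph{Main obstacle.} The heart of the matter is the double sum in the second moment. Two things must cooperate: (i) the ratio $\vol_q(n,pn;k)/\vol_q(n,pn)$ must decay exponentially for \emph{every} $1\le k\le n$, not only for $k=\Omega(n)$ — this is exactly what \cref{lem:volume cube}, built on \cref{thm:suff-cond-small-int}, supplies, and its absence is why the argument of \cite{LW21} did not extend past $q=2$; and (ii) one must balance $\gamma$ against $\eps$ so that, uniformly over $1\le j\le L+1\approx1/\eps$, the contributions with $j$ shared codewords are swamped by the independent ($j=0$) term, which forces $\gamma=\exp(-\Theta_{p,q}(1/\eps))$ and $\eps<\eps_0(p,q)$. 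The first moment estimate, the reduction between random-code models, and the rounding of $L$ are all routine.
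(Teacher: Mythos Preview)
Your proposal is correct and follows essentially the same second-moment strategy as the paper: define a witness count, show its mean is large, and control the variance via the exponential decay of the $q$-ary intersection volume from \cref{lem:volume cube}. The only noteworthy organizational difference is that the paper counts pairs $(a,X)$ of a center and an ordered $L$-list and, in its covariance lemma, splits the overlap parameter at $\ell_0=\tfrac{1-h_q(p)}{2\eps}$, invoking a separate elementary bound $\PP(\cE_\ell)\le\mu^{2L-\ell+1}$ for small $\ell$; your concavity analysis of $\phi(j)=1-\log_q\!\big(1+(q-1)e^{-cj}\big)$ shows that the intersection-volume bound alone already handles every $1\le j\le L+1$, so the split is not needed.
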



\subsection{The sphere-covering bounds}

Our second group of applications concern codes over metric spaces. 
Consider a metric space $(X,\mathsf{d})$ and a real number $r>0$. We say a subset $C$ of $X$ is an \emph{$(X,\mathsf{d},r)$-code} if
$\mathsf{d}(c,c')>r$ for any distinct codewords $c,c'\in C$. A simple covering argument shows the existence of such a code $C$ with
\begin{equation}\label{eq:covering bound}
|C|\ge \inf_{a\in X}\frac{m(X)}{m(B(a,r))}   
\end{equation}
for {\em any} finite measure $m$ on the Borel $\sigma$-algebra of $X$. 
To see why \eqref{eq:covering bound} holds, one can assume $C$ is a maximal $(X,\mathsf{d},r)$-code of finite size. From the maximality of $C$, we deduce that $X=\bigcup_{a\in C}B(a,r)$. By the subadditivity of measures, we then get $m(X) \le \sum_{a\in C}m(B(a,r)) \le |C|\cdot \sup_{a\in X} m(B(a,r))$, resulting in \eqref{eq:covering bound}.

Improving upon the sphere-covering bound \eqref{eq:covering bound} is a notoriously difficult problem; more on this later.
Our next result improves the bound, assuming some mild conditions on the metric space.

\begin{theorem}\label{thm: existence of code}
  Let $(X,\mathsf{d})$ be a finite metric space, and let $r>0$.  Suppose
  \begin{itemize}
      \item[\rm (P1)] (Homogeneous) For every $s\in \mathbb{R}$, all the balls of radius $s$ have the same volume $\vol(s)$.
        \end{itemize}
  Suppose further that there exist $t\in (0,r)$ and $K>0$ such that
  \begin{itemize}
      \item[\rm (P2)] (Exponential growth)  $\frac{\vol(r-t)}{\vol(r)}\le e^{-K}$; and
      
      \item[\rm (P3)] (Small intersection volume) for any $a,b \in X$ with $r-t < \mathsf{d}(a,b) \le r$, 
      $\frac{\vol\left(B(a,r)\cap B(b,r)\right)}{\vol(r)}\le e^{-K}$.
  \end{itemize}
  Then there is an $(X,\mathsf{d},r)$-code of size
  $(1-o_{K\rightarrow \infty}(1))K \cdot \frac{|X|}{\vol(r)}$, and the number of $(X,\mathsf{d},r)$-codes is at least $\exp \left((\frac{1}{8}+o_{K\rightarrow\infty}(1))K^2\cdot \frac{|X|}{\vol(r)}\right)$.
\end{theorem}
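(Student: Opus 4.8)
The plan is to rephrase the statement in terms of independent sets in a graph and then reduce everything to a single estimate on independent sets in locally sparse regular graphs. First I would form the \emph{conflict graph} $G$ on vertex set $X$, joining $u$ and $v$ whenever $0<\mathsf d(u,v)\le r$. By (P1) this graph is regular of degree $D:=\vol(r)-1$ on $N:=|X|$ vertices, and its independent sets are precisely the $(X,\mathsf d,r)$-codes. So it suffices to prove that $\alpha(G)\ge\bigl(1-o_{K\to\infty}(1)\bigr)\tfrac{KN}{\vol(r)}$ and that the number $i(G)$ of independent sets of $G$ satisfies $i(G)\ge\exp\!\bigl((\tfrac18+o_{K\to\infty}(1))\tfrac{K^2N}{\vol(r)}\bigr)$. (Note that $\vol(r-t)\ge1$ forces $K\le\ln\vol(r)$, so $K\to\infty$ implies $D\to\infty$, and every error term below can be absorbed into a single $o_{K\to\infty}(1)$.)

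The only place the hypotheses (P2) and (P3) enter is the following local sparsity bound: for every $v\in X$, $e\bigl(G[N(v)]\bigr)\le e^{-K}\vol(r)^2$. To see this, partition $N(v)$ into the inner part $N_{\mathrm{in}}=\{u:0<\mathsf d(u,v)\le r-t\}$ and the outer part $N_{\mathrm{out}}=\{u:r-t<\mathsf d(u,v)\le r\}$. For every $u\in N(v)$ one has $\deg_{G[N(v)]}(u)\le|B(u,r)\cap B(v,r)|$, and if $u\in N_{\mathrm{out}}$ then $r-t<\mathsf d(u,v)\le r$, so (P3) bounds this by $e^{-K}\vol(r)$. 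Since $|N_{\mathrm{in}}|\le\vol(r-t)\le e^{-K}\vol(r)$ by (P2) and $|N_{\mathrm{out}}|\le D<\vol(r)$, summing the degrees over $u\in N(v)$ gives $2\,e(G[N(v)])\le|N_{\mathrm{in}}|\cdot\vol(r)+|N_{\mathrm{out}}|\cdot e^{-K}\vol(r)\le 2e^{-K}\vol(r)^2$, as desired.

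It then remains to prove a self-contained graph lemma, which I expect to be the main obstacle: if $G$ is $D$-regular on $N$ vertices with $e(G[N(v)])\le e^{-K}(D+1)^2$ for every $v$, then $\alpha(G)\ge\bigl(1-o_{K\to\infty}(1)\bigr)\tfrac{KN}{D+1}$ and $i(G)\ge\exp\!\bigl((\tfrac18+o_{K\to\infty}(1))\tfrac{K^2N}{D+1}\bigr)$. For the independence number this is a locally sparse strengthening of the Shearer / Ajtai--Koml\'os--Szemer\'edi bound, and the mild sparsity $e(G[N(v)])\le e^{-K}(D+1)^2$ is exactly what lets one pull a factor $K$ out of the trivial bound $\alpha(G)\ge N/(D+1)$; I would prove it either by the semi-random (nibble) method or by analysing the hard-core model. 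For the counting statement I would work with the hard-core model $\mu_\lambda$ on $G$, establish a lower bound on the occupancy fraction $N^{-1}\E_{\mu_\lambda}|\bm I|$ that holds uniformly over $\lambda\in(0,1]$ by the same local argument, and then integrate in the fugacity via $\ln i(G)=\int_0^1\lambda^{-1}\,\E_{\mu_\lambda}|\bm I|\,d\lambda$; the constant $\tfrac18$ comes out of this integration. The reduction to $G$ and the local-sparsity computation above are routine; all the difficulty is concentrated in this lemma, and in particular in obtaining the sharp leading constants $1$ and $\tfrac18$, the latter requiring control of the occupancy fraction at every fugacity rather than only at $\lambda=1$.
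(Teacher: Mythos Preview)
Your proposal is correct and matches the paper's approach essentially step for step: the paper forms the same conflict graph, uses the same inner/outer partition of each neighbourhood to establish local sparsity, and then invokes a graph lemma (its Theorem~5.1) whose proof quotes Hurley--Pirot for the chromatic/independence bound and integrates the Davies--de Verclos--Kang--Pirot occupancy lower bound over a range of fugacities to get the $\tfrac18$ constant. The only cosmetic difference is that the paper states the graph lemma with the partition hypothesis $|I|\le D/K'$ and $\deg_\Gamma(u)\le D/K'$ for $u\in B$ (with $K'=e^{K}$) rather than the derived edge count, and restricts the integration to $\lambda\in[\tfrac{\log K'}{D},\tfrac{\sqrt{K'}}{D}]$ where the occupancy bound applies, but this is exactly what your sketch reduces to.
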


\cref{thm: existence of code} can be used in conjunction with \cref{thm:suff-cond-small-int} (for verifying condition (P3)) to give a unified proof of improvements on Gilbert-Varshamov type bounds on various models of error correction codes, which we now discuss in details. Theorem 2.4 builds on recent developments on some graph theoretic results; such approach was pioneered by the work of Jiang and Vardy \cite{JV04} and by Krivelevich, Litsyn and Vardy \cite{KLV04}.


\subsubsection*{$q$-ary codes}
A $q$-ary code $\cC$ is said to have minimum distance at least $d$ if any two codewords in $\cC$ have distance at least $d$. Given three parameters $q,n$ and $d$, what is the largest possible size $A_q(n,d)$ of a $q$-ary length-$n$ code with minimum distance at least $d$? This question has been studied extensively for almost seven decades, and remains one of the most important questions in coding theory. 

For a word $x\in [q]^n$, the Hamming ball of radius $d$ centered at $x$ is the collection of words in $[q]^n$ with distance at most $d$ from $x$. The volume of this ball does not depend on the location of $x$ and can be expressed as
\[
\vol_q(n,d)=\sum_{i=0}^d\binom{n}{i}(q-1)^i.
\]
The sphere-covering bound \eqref{eq:covering bound}, applied to the the Hamming space $([q]^n,\Delta)$, gives
\[
A_q(n,d+1)\ge \frac{q^n}{\vol_q(n,d)}.
\]
This is known in the literature as the famous Gilbert--Varshamov bound \cite{Gil52,Var57} from the 1950's. For five decades this was the best asymptotic lower bound for $A_q(n,d+1)$ (see for example \cite[page 95]{HP10}).

The case when $d$ is proportional to $n$, that is, $d/n$ is a positive constant, is of special interest in coding theory. It is an easy exercise to see that for $d/n \ge (q-1)/q$, the fraction $q^n/\vol_q(n,d)$ is less than $2$. In this case, the Gilbert--Varshamov bound gives no useful information. Thus, the value $(q-1)/q$ is a natural threshold for the ratio $d/n$.

In a breakthrough, Jiang and Vardy \cite{JV04} improved the Gilbert--Varshamov bound, for the binary case, for $d \le 0.4994n$.  
Extending the work of Jiang and Vardy, Vu and Wu \cite{VW05} proved that if $d/n$ is less than $(q-1)/q$, then one can improve the Gilbert--Varshamov bound by a factor linear in $n$.
We give a short proof of the following strengthening of Vu-Wu's result, showing ampleness of large codes.

\begin{theorem}\label{thm:q-ary}
  Let $q\ge 2$ and let $0<p<\frac{q-1}{q}$ and $d=pn$. Then there exists a positive constant $c=c_{p,q}$ such that the number of $q$-ary length-$n$ codes with minimum distance at least $d+1$ is at least
  \[
  \exp\left(cn^2\cdot \frac{q^n}{\vol_q(n,d)}\right).
  \]
\end{theorem}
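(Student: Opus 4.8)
The plan is to deduce \cref{thm:q-ary} from \cref{thm: existence of code} applied to the $q$-ary Hamming space $(X,\mathsf{d}) = ([q]^n, \Delta)$ with radius $r = d = pn$, so I only need to verify conditions (P1)--(P3) with a parameter $K$ that grows linearly in $n$. Condition (P1) (homogeneity) is immediate since the volume of a Hamming ball in $[q]^n$ depends only on its radius. The real content is in (P2) and (P3), and for (P3) I will invoke the intersection volume estimate for the Hamming cube promised in the excerpt (\cref{lem:volume cube}), namely that $\frac{\vol_q(n,pn;k)}{\vol_q(n,pn)} \le e^{-\Omega_{p,q}(k)}$ for all $1 \le k \le n$ whenever $0 < p < 1 - 1/q$.

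First I would fix the right scale for $t$. Since the Hamming ball volume $\vol_q(n,pn)$ grows by a constant factor (roughly $q^{1-h_q(p)}$ in the exponent per coordinate; more precisely $\vol_q(n,r)/\vol_q(n,r-1)$ is bounded below by a constant $> 1$ when $p < 1-1/q$, because the dominant binomial term is strictly increasing there), dropping the radius by $t = \beta n$ coordinates for a suitably small constant $\beta = \beta_{p,q} > 0$ multiplies the volume by $e^{-\Theta_{p,q}(n)}$; this gives (P2) with $K = K_{p,q} n = \Theta_{p,q}(n)$. For (P3), I take any $a,b$ with $r - t < \Delta(a,b) \le r$, so $\Delta(a,b) \ge (p-\beta)n = \Omega_{p,q}(n)$; by \cref{lem:volume cube} the normalized intersection volume is at most $e^{-\Omega_{p,q}(\Delta(a,b))} \le e^{-\Omega_{p,q}(n)}$. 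Choosing $\beta$ small enough (and hence the implicit constant in (P2) small enough) ensures both bounds are simultaneously $\le e^{-K}$ for the \emph{same} $K = \Theta_{p,q}(n)$; this is just a matter of taking $K$ to be the minimum of the two linear-in-$n$ exponents.

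With (P1)--(P3) verified for $K = c'_{p,q} n$, \cref{thm: existence of code} yields that the number of $([q]^n, \Delta, pn)$-codes — equivalently, $q$-ary length-$n$ codes with minimum distance at least $d+1$ — is at least $\exp\!\big((\tfrac18 + o_{K\to\infty}(1)) K^2 \cdot \frac{|X|}{\vol(r)}\big)$, where $|X| = q^n$ and $\vol(r) = \vol_q(n,d)$. Since $K^2 = \Theta_{p,q}(n^2)$ and the error term $o_{K\to\infty}(1)$ is genuinely $o(1)$ as $n \to \infty$ (because $K \to \infty$ with $n$), this is at least $\exp\!\big(c n^2 \cdot \frac{q^n}{\vol_q(n,d)}\big)$ for a suitable constant $c = c_{p,q} > 0$ and all large $n$; one absorbs small $n$ into the constant trivially. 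I expect the only mildly delicate point to be the bookkeeping that ties together the constants so that a single $K$ works in both (P2) and (P3) — and, relatedly, confirming that $\vol_q(n,r)/\vol_q(n,r-1) \ge 1 + \Omega_{p,q}(1)$ in the range $p < 1-1/q$, which is a standard estimate on the ratio of consecutive terms in $\sum_i \binom{n}{i}(q-1)^i$ but should be stated cleanly; the rest is a direct substitution into \cref{thm: existence of code}.
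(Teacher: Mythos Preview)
Your proposal is correct and follows essentially the same route as the paper: the paper's proof of \cref{thm:q-ary} is the one-line observation that \cref{lem:volume cube} supplies exactly the exponential growth (P2) and the intersection bound (P3) needed to invoke \cref{thm: existence of code} with $K=\Theta_{p,q}(n)$. Your write-up in fact spells out more of the constant-matching bookkeeping than the paper does, but the argument is the same.
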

As the number of subsets of $[q]^n$ of size $o_{p,q}(1)n\cdot \frac{q^n}{\vol_q(n,d)}$ is 
$\exp\left(o_{p,q}(1)n^2\cdot \frac{q^n}{\vol_q(n,d)}\right)$, \cref{thm:q-ary} recovers the bound $A_q(n,d+1) \ge \Omega_{p,q}(1)n\cdot \frac{q^n}{\vol_q(n,d)}$ of Vu and Wu. The original proof of Vu-Wu's bound was quite complicated, and involved heavy calculations. Our proof of \cref{thm:q-ary} is conceptual and reflects, in a clean way, the necessity of the assumption $d/n<(q-1)/q$. 


\subsubsection*{Constant-weight codes}
Given positive integers $n,d$ and $w$, we denote by $A(n,d,w)$ the size of a largest {\em constant-weight code} of length $n$ and minimum Johnson distance $d$ all
of whose codewords are in $\{0,1\}^n$ with weight $w$. Estimating $A(n,d,w)$ accurately is the central problem regarding constant-weight codes. With the exceptions of a few particular small cases \cite{Brouwer} and the fixed $w$ case  \cite{CL07}, it remains open in general.

Thanks to symmetry, all Johnson ball of radius $d$ in $\binom{[n]}{w}$ have the same volume
\[
V_w(n,d):=\sum_{i=0}^{d}\binom{w}{i}\binom{n-w}{i}.
\]
Thus, the sphere-covering bound, specialized to the Johnson space, gives
\[
A(n,d+1,w)\ge \frac{\binom{n}{w}}{V_w(n,d)}.
\]
This lower bound was obtained by Levenshtein back in 1971 \cite{Lev71}.

Our next result provides an improvement on this 50-year-old bound of Levenshtein by a factor linear in the dimension. This resolves a problem posed by Jiang and Vardy \cite{JV04}.

\begin{theorem}\label{thm:constant-weight}
Let $\alpha$ and $\lambda$ be constants satisfying $0<\alpha<\lambda(1-\lambda)$. There is a positive constant $c=c_{\alpha,\lambda}$ such that for $d=\alpha n$ and $w=\lambda n$
\begin{equation*}
A(n,d+1,w)\ge cn \cdot \frac{\binom{n}{w}}{V_w(n,d)}.    
\end{equation*}
\end{theorem}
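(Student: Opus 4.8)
The plan is to deduce \cref{thm:constant-weight} from \cref{thm: existence of code}, applied to the Johnson space $X=\binom{[n]}{w}$ with the Johnson metric $\mathsf{d}$ and radius $r=d=\alpha n$. In this space $|X|=\binom{n}{w}$, a radius-$s$ ball has volume $V_w(n,s)$, and an $(X,\mathsf{d},d)$-code is exactly a constant-weight code of length $n$ and weight $w$ whose minimum Johnson distance is at least $d+1$; thus $A(n,d+1,w)$ is the maximum size of such a code, and \cref{thm: existence of code} produces one of size $(1-o(1))K\cdot\binom{n}{w}/V_w(n,d)$, which is what \cref{thm:constant-weight} asks for once we secure $K=\Theta_{\alpha,\lambda}(n)$ and verify (P1)--(P3).

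Condition (P1) is automatic, since $S_n$ acts transitively on $\binom{[n]}{w}$ by isometries. For (P2) the key fact is the asymptotics $V_w(n,\gamma n)=e^{(g(\gamma)+o(1))n}$, where the exponent $g$, coming from the last term $\binom{w}{\gamma n}\binom{n-w}{\gamma n}$ of the sum defining $V_w$ (which dominates in this range), is strictly increasing on $[0,\lambda(1-\lambda)]$ with maximum at $\gamma=\lambda(1-\lambda)$. This is exactly where the hypothesis $\alpha<\lambda(1-\lambda)$ is used: it ensures that $g$ is still strictly increasing throughout $[0,\alpha]$ rather than having already passed its peak. Consequently $V_w(n,(\alpha-\beta)n)/V_w(n,\alpha n)\le e^{-\Omega_{\alpha,\lambda}(n)}$ for any fixed $\beta\in(0,\alpha)$, which gives (P2) with $t=\beta n$ and some $K_1=\Omega_{\alpha,\lambda}(n)$.

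For (P3) I would invoke the Johnson-space intersection-volume estimate \cref{lem:volume constant-weight}, itself obtained from \cref{thm:suff-cond-small-int}: for $0<\alpha<\lambda(1-\lambda)$, the intersection of two radius-$(\alpha n)$ Johnson balls whose centres are at distance $k$ has volume at most $e^{-\Omega_{\alpha,\lambda}(k)}V_w(n,\alpha n)$. By homogeneity this ratio depends only on $k=\mathsf{d}(a,b)$, and in the range required by (P3) one has $k>r-t=(\alpha-\beta)n$, so the ratio is at most $e^{-\Omega_{\alpha,\lambda}(n)}=:e^{-K_2}$. Setting $K:=\min(K_1,K_2)=\Theta_{\alpha,\lambda}(n)$, conditions (P2) and (P3) both hold with this common $K$, and $K\to\infty$ with $n$; \cref{thm: existence of code} then yields a constant-weight code with minimum Johnson distance at least $d+1$ of size at least $(1-o(1))K\cdot\binom{n}{w}/V_w(n,d)\ge cn\cdot\binom{n}{w}/V_w(n,d)$ for a suitable $c=c_{\alpha,\lambda}>0$ and all large $n$, the finitely many small values of $n$ being absorbed into $c$. (The second half of \cref{thm: existence of code} simultaneously gives the stronger counting bound $\exp(\Omega_{\alpha,\lambda}(n^2)\cdot\binom{n}{w}/V_w(n,d))$, although \cref{thm:constant-weight} only asks for the size bound.)

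The substance of the argument lies not in this deduction but in \cref{lem:volume constant-weight}, i.e.\ in checking hypotheses (A1)--(A3) of \cref{thm:suff-cond-small-int} for the Johnson space. Condition (A1) is the same entropy estimate as in (P2). Condition (A2), $(r,k)$-dispersedness, requires the expectation $\E_{\x\sim S(a,r-i)}[\mathsf{d}(\x,b)]$ to exceed $r-i$ — the value taken by $\mathsf{d}(\x,a)$ on this shell — by an amount linear in $k$; this is a first-moment computation, a hypergeometric expectation determined by the overlap $|a\cap b|=w-k$, and it too relies on $\alpha<\lambda(1-\lambda)$. Condition (A3), the subgaussian concentration of $\mathsf{d}(\x,b)$ on the shell $S(a,r-i)$, is the crux: $\x\mapsto\mathsf{d}(\x,b)$ is $1$-Lipschitz with respect to the natural swap graph on $S(a,r-i)$, and one needs a concentration inequality on this slice with variance proxy $\Theta(n)$ — the Johnson-space counterpart of \cref{lem:slice-concentration-q-ary}, proved via a log-Sobolev or Azuma-type martingale argument adapted to the slice. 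I expect establishing this subgaussian tail to be the main obstacle; once it is available, \cref{thm:suff-cond-small-int} delivers an intersection ratio of the form $2\exp(-\Omega_{\alpha,\lambda}(k+k^2/n))$, which for $k=\Theta(n)$ is the required $e^{-\Omega_{\alpha,\lambda}(n)}$.
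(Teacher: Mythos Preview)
Your proposal is correct and follows essentially the same route as the paper: verify (P1)--(P3) of \cref{thm: existence of code} for the Johnson space using \cref{lem:volume constant-weight} (which in turn checks (A1)--(A3) of \cref{thm:suff-cond-small-int}), and read off the size bound with $K=\Theta_{\alpha,\lambda}(n)$. The paper's own proof is even terser --- it simply cites \cref{lem:volume constant-weight} as establishing the hypotheses of \cref{thm: existence of code} --- and for (A3) the concrete tool used is Bobkov's inequality (\cref{thm:uniform concentration-slice}) applied to the two independent halves of $\ell_{a,b}$, rather than a general slice log-Sobolev argument, but this is a matter of packaging.
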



\subsubsection*{Permutation codes}
Let $S_n$ be the symmetric group of permutations on $[n]$.  Consider a permutation $\sigma \in S_n$ as a codeword $(\sigma(1),\dots, \sigma(n)) \in [n]^n$, then $S_n$ is a subset of $[n]^n$.
With this view, the Hamming distance between two permutations $\sigma, \tau\in S_n$ 
is naturally defined as 
\[ \Delta(\sigma,\tau) = \big|\{i\in [n]: \sigma(i)\neq \tau(i)\}\big|. \]
A code $\mathcal{C}$ is called a \emph{permutation code} if $\mathcal{C}\subseteq S_n$.
It is said to have minimum distance at least $d$ if any two codewords in $\mathcal{C}$ have the Hamming distance at least $d$.

Permutation codes have been extensively studied, see for example \cite{Blake74, BCD79, DF77, DV65, Sle65}. It also has various applications including data transmission over power lines \cite{CCD04, CKL04, FV00, PVY03, Vinck05}, and design of block ciphers \cite{CLT00}. From an extremal perspective, the most natural question for permutation codes is that for given $n$ and $d$, what is the largest possible size $A^{\rm per}(n,d)$ of a length-$n$ permutation code with minimum distance at least $d$?  Let $\vol^{\rm per}(n,d)$ be the volume of a radius-$d$ Hamming ball in $S_n$. 
Once again, the sphere-covering bound \eqref{eq:covering bound} yields
\[A^{\rm per}(n,d+1) \geq \frac{n!}{\vol^{\rm per}(n,d)}.\]
Tait-Vardy-Verstra\"{e}te \cite{TVV13}, Yang-Chen-Yuan \cite{YCY02} and Wang-Zhang-Yang-Ge \cite{WZYG17} further improved this to
\[A^{\rm per}(n,d+1) \geq \Omega(n) \cdot \frac{n!}{\vol^{\rm per}(n,d)} \quad \text{for} \enskip \Omega(n) \le d <n/2.\]
We prove the following strengthening which recovers this bound for a larger range of distance $d$.

\begin{theorem}\label{thm:permutation}
For given $\eps \in (0,1/2)$, there exists a positive constant $c=c_{\eps}$ such that the following holds.
For $\eps n<d<(1-\eps)n$, $A^{\rm per}(n,d+1) \geq cn \cdot \frac{n!}{\vol^{\rm per}(n,d)}$. 
Furthermore, the number of length-$n$ permutation codes with minimum distance at least $d$ is at least
\[
\exp\left(cn^2 \cdot \frac{n!}{\vol^{\rm per}(n,d)}\right).
\]
\end{theorem}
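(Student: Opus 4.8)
The plan is to deduce \cref{thm:permutation} from \cref{thm: existence of code} applied to the metric space $(S_n,\Delta)$ with radius parameter $r=d$. An $(S_n,\Delta,d)$-code is precisely a permutation code of minimum distance at least $d+1$, and such a code in particular has minimum distance at least $d$; hence both assertions of \cref{thm:permutation} will follow once we exhibit a threshold $t\in(0,d)$ and a quantity $K=\Omega_\eps(n)$ for which the hypotheses (P1)--(P3) hold. Indeed, \cref{thm: existence of code} then produces a permutation code of size $(1-o_{K\to\infty}(1))K\cdot n!/\vol^{\rm per}(n,d)$ and at least $\exp\big((\tfrac{1}{8}+o_{K\to\infty}(1))K^2\cdot n!/\vol^{\rm per}(n,d)\big)$ permutation codes of minimum distance at least $d+1$; with $K=c_\eps n$ the error terms are genuine $o(1)$'s as $n\to\infty$ for fixed $\eps$, so these bounds become $c_\eps' n\cdot n!/\vol^{\rm per}(n,d)$ and $\exp(c_\eps' n^2\cdot n!/\vol^{\rm per}(n,d))$ respectively.

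Condition (P1) is immediate: left translation $\pi\mapsto g\pi$ preserves $\Delta$ and acts transitively on $S_n$, so every radius-$s$ ball has the same volume $\vol^{\rm per}(n,s)=\sum_{j=0}^{s}\binom{n}{j}D_j$, with $D_j$ the number of derangements of $[j]$. For (P2) I would take $t=\delta n$ for a small constant $\delta=\delta_\eps<\eps/2$, and use that for $3\le j\le d\le(1-\eps)n$ the shell ratio satisfies
\[
\frac{\binom{n}{j}D_j}{\binom{n}{j-1}D_{j-1}}=\frac{n-j+1}{j}\cdot\frac{D_j}{D_{j-1}}\ \ge\ \frac{n-j+1}{2}\ \ge\ \frac{\eps n}{2},
\]
where $D_j=jD_{j-1}+(-1)^j$ is used to get $D_j/D_{j-1}\ge j/2$. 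Consequently $\vol^{\rm per}(n,d)$ is at most twice its largest term $\binom{n}{d}D_d$, which itself exceeds $\binom{n}{d-t}D_{d-t}$ by a factor $(\eps n/2)^t$; since also $\vol^{\rm per}(n,d-t)\le 2\binom{n}{d-t}D_{d-t}$, we get $\vol^{\rm per}(n,d-t)/\vol^{\rm per}(n,d)\le 2(\eps n/2)^{-t}\le e^{-\Omega_\eps(n)}$ for $n$ large, certifying (P2) for any $K$ of order $n$ (in fact with a $\log n$ factor to spare).

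Condition (P3) is exactly the statement that intersection volumes in $S_n$ decay exponentially: for $\sigma,\tau\in S_n$ with $d-t<\Delta(\sigma,\tau)\le d$, the intersection-volume estimate for the symmetric group (\cref{lem:volume permutation}) --- obtained by applying \cref{thm:suff-cond-small-int} to $(S_n,\Delta)$ at radius $d$ --- yields
\[
\frac{\vol^{\rm per}\big(B(\sigma,d)\cap B(\tau,d)\big)}{\vol^{\rm per}(n,d)}\ \le\ 2e^{-\Omega_{\eps}(1)\cdot\Delta(\sigma,\tau)}\ \le\ e^{-\Omega_\eps(n)},
\]
the last inequality because $\Delta(\sigma,\tau)>d-t>(\eps-\delta)n=\Omega_\eps(n)$. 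Taking $K$ to be the smaller of the two $\Omega_\eps(n)$ exponents supplied by (P2) and (P3) --- so $K=c_\eps n\to\infty$ --- and feeding it into \cref{thm: existence of code} completes the deduction.

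The only genuine difficulty is hidden in (P3), namely in the verification of the hypotheses of \cref{thm:suff-cond-small-int} for $(S_n,\Delta)$ at radius $d$: (A1) follows from the super-fast shell growth displayed above (rate $c=1$ works), (A2) requires a constant $\alpha_\eps>0$ with $\E_{\pi\sim S(\sigma,d-i)}[\Delta(\pi,\tau)-\Delta(\pi,\sigma)]\ge 2\alpha_\eps\Delta(\sigma,\tau)$ for $0\le i\le\alpha_\eps\Delta(\sigma,\tau)$ --- provable by a symmetrization/coupling argument that exploits $\Delta(\sigma,\tau)>d-t$ being a constant fraction of $d$ --- and (A3), the subgaussian concentration of $\Delta(\pi,\tau)-\Delta(\pi,\sigma)$ about its mean for $\pi$ uniform on a slice of $S_n$, is a large-deviation inequality on a slice (\cref{lem:slice-concentration-perm}); this last is the technically involved input. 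Everything else is bookkeeping, the crucial balancing act being to pick $\delta$ small enough that the window $(d-t,d]$ stays a constant fraction of $n$ above $0$ while $t$ consecutive shells still multiply the ball volume by $e^{\Omega_\eps(n)}$ --- both of which are guaranteed by the hypothesis $\eps n<d<(1-\eps)n$.
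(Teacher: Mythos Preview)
Your proposal is correct and follows essentially the same approach as the paper: deduce the result from \cref{thm: existence of code} applied to $(S_n,\Delta)$ at radius $r=d$, with (P1) coming from transitivity, (P2) from the rapid shell growth, and (P3) from the intersection-volume estimate \cref{lem:volume permutation} (itself proved via \cref{thm:suff-cond-small-int} and the slice concentration \cref{lem:slice-concentration-perm}). The paper's own proof is a one-line invocation of \cref{thm: existence of code} via \cref{lem:volume permutation}; you have spelled out the verification of (P1)--(P3) in more detail, but the architecture is identical.
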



\subsubsection*{Spherical codes}

A \emph{spherical code} of angle $\theta$ in dimension $n$ is a collection of vectors $x_1,\ldots,x_k$ in the unit sphere $\mathbb{S}^{n-1}$
such that $\langle x_i,x_j\rangle \le \cos \theta$ for every $i\ne j$, that is, any two distinct vectors form an angle at least $\theta$. Let $A(n,\theta)$ be the size of the largest spherical code of angle $\theta$ in dimension $n$. 

For $\theta\ge \pi/2$, Rankin \cite{Ran55} determined $A(n,\theta)$ exactly, so from now on we will assume that $\theta \in (0,\pi/2)$. For $x \in \mathbb{S}^{n-1}$, we write 
\[
C_{\theta}(x)=\{y\in \mathbb{S}^{n-1}\colon \langle x,y\rangle \ge \cos \theta\}
\]
for the spherical cap of angular radius $\theta$ around $x$, and let $s_n(\theta)$ denote the normalized surface area of $C_{\theta}(x)$. 

The sphere-covering bound \eqref{eq:covering bound} (observed by Chabauty \cite{Cha53}, Shannon \cite{Sha59}, and Wyner \cite{Wyn65}) implies
\[
A(n,\theta) \ge \frac{1}{s_n(\theta)}=(1+o(1))\sqrt{2\pi n}\cdot \frac{\cos \theta}{\sin^{n-1}\theta}.
\]
For over six decades there have been no improvements to this easy lower bound.
By estimating the expected size of a random spherical code drawn from a Gibbs point process, Jenssen, Joos and Perkins \cite{JJP18} recently improved the lower bound by a linear factor in dimension. 
\begin{theorem}[Jenssen, Joos and Perkins]
\label{thm:sph-code}
    For $\theta\in(0,\pi/2)$, let $c_{\theta}=\log \frac{\sin^2 \theta}{\sqrt{(1-\cos\theta)^2(1+2\cos\theta)}}$. Then,
    $$A(n,\theta)\ge (1+o(1))c_{\theta}\cdot \frac{n}{s_n(\theta)}.$$
\end{theorem}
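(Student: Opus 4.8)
The theorem to prove is Theorem 2.8 (the Jenssen–Joos–Perkins bound on spherical codes), and the natural route is to deduce it from our general existence result, \cref{thm: existence of code}, after translating the sphere into a suitable finite metric space. The plan is as follows. First, since $\mathbb{S}^{n-1}$ is not finite, I would pass to a fine $\delta$-net $X\subseteq\mathbb{S}^{n-1}$ (or discretize the Gibbs measure at scale $\delta$), equip it with the metric $\mathsf{d}(x,y)=$ angular distance $\arccos\langle x,y\rangle$, and check that distances/volumes are preserved up to a $(1+o(1))$ factor as $\delta\to 0$; the surface measure on the sphere restricts to essentially the counting measure on a sufficiently uniform net. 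A spherical code of angle $\theta$ then corresponds to an $(X,\mathsf{d},\theta)$-code in the sense of \cref{sec:application}, up to a negligible loss in the angle. Thus it suffices to verify conditions (P1)–(P3) of \cref{thm: existence of code} with $r=\theta$ and to identify the parameter $K$ with $(1+o(1))c_\theta$.

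Condition (P1), homogeneity, is immediate from the rotational symmetry of the sphere (and is inherited by a symmetric enough net, or one can work directly with the rotation-invariant measure). Condition (P2), exponential growth, requires choosing $t=t(n)$ with $s_n(\theta-t)/s_n(\theta)\le e^{-K}$; using the standard asymptotic $s_n(\theta)=(1+o(1))\sqrt{2\pi n}\,\cos\theta/\sin^{n-1}\theta$ quoted in the excerpt, the ratio behaves like $(\sin(\theta-t)/\sin\theta)^{n-1}$ up to lower-order terms, so for $t=\Theta(1/n)$ this ratio is $e^{-\Theta(1)}$, and one tunes $t$ so that the exponent equals exactly $K=(1+o(1))c_\theta$. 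The crux is condition (P3): for $x,y$ at angular distance just below $\theta$, one must show $s_n(C_\theta(x)\cap C_\theta(y))/s_n(\theta)\le e^{-K}$. The intersection of two spherical caps of angular radius $\theta$ whose centers are at angle $\psi\le\theta$ is a lens; its normalized area can be computed (or tightly bounded) via the two-dimensional "wedge" picture in the plane spanned by $x,y$, integrating the $(n-2)$-dimensional spherical slices. This is exactly the computation underlying the constant $c_\theta=\log\frac{\sin^2\theta}{\sqrt{(1-\cos\theta)^2(1+2\cos\theta)}}$: the worst case is $\psi=\theta$ (caps "just touching" at distance $\theta$), and there the lens area relative to the cap area is $(1+o(1))e^{-c_\theta}$ by an explicit Laplace-type estimate on the integral over the slice. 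So one sets $K$ to be the minimum of the exponents arising in (P2) and (P3); the calculation shows both can be taken to be $(1+o(1))c_\theta$, and in fact the binding constraint is (P3), which pins down $c_\theta$.

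Granting (P1)–(P3) with $K=(1+o(1))c_\theta$, \cref{thm: existence of code} yields an $(X,\mathsf{d},\theta)$-code of size $(1-o(1))K\cdot|X|/\vol(\theta)=(1+o(1))c_\theta\cdot\frac{1}{s_n(\theta)}$ after the net-to-sphere translation (the factors $|X|$ and $\vol(\theta)$ both scale with the net density, so their ratio returns the reciprocal normalized cap area), and this is precisely $(1+o(1))c_\theta\cdot\frac{n}{s_n(\theta)}$ once one inserts the asymptotic $1/s_n(\theta)=(1+o(1))\sqrt{2\pi n}\cos\theta/\sin^{n-1}\theta$ — wait, I should be careful: the stated bound is $c_\theta\cdot n/s_n(\theta)$, so the extra factor $n$ must come from $K=\Theta(n)$, not $\Theta(1)$. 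Reconsidering, since $t$ can be taken as large as $\Theta(1)$ (any $t<\theta$), the ratio $s_n(\theta-t)/s_n(\theta)=e^{-\Theta(n)}$, so $K=\Theta(n)$; optimizing $t\in(0,\theta)$ subject to the matching lower bound from (P3) gives $K=(1+o(1))c_\theta\cdot n$, and then \cref{thm: existence of code} delivers exactly the claimed $(1+o(1))c_\theta\cdot\frac{n}{s_n(\theta)}$.

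**Main obstacle.** The hard part will be the sharp evaluation of the lens volume in (P3): one needs not just that the ratio is exponentially small, but that its exponential rate is *exactly* $c_\theta\cdot n(1+o(1))$, matched by the growth rate in (P2), so that the resulting $K$ is as large as possible. This requires a careful Laplace/saddle-point analysis of the integral defining $s_n(C_\theta(x)\cap C_\theta(y))$ as a function of the separation angle $\psi$, verifying that $\psi=\theta$ is the extremal configuration and extracting the precise constant $\frac{\sin^2\theta}{\sqrt{(1-\cos\theta)^2(1+2\cos\theta)}}$ inside the logarithm. A secondary technical point is making the discretization argument clean — ensuring that the passage to a finite net does not erode the constant, which one handles by taking $\delta$ small relative to any fixed $n$ and then letting $n\to\infty$, so all $\delta$-dependent errors are absorbed into the $o(1)$.
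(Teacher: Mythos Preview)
Your high-level plan---discretize the sphere, build an auxiliary graph, and invoke a local-sparsity result---matches the paper's route. However, the paper explicitly remarks that \cref{thm: existence of code} is \emph{not} directly applicable here, and your attempt to push it through reveals exactly why.

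The gap is in your treatment of (P3). You write that ``the worst case is $\psi=\theta$'', but this is backwards: the intersection $C_\theta(a)\cap C_\theta(b)$ is \emph{largest} when the centers are closest, so over the range $\psi\in(\theta-t,\theta]$ the binding case for (P3) is $\psi$ just above $\theta-t$, not $\psi=\theta$. This creates an unresolvable tension with (P2). To get $K=(1+o(1))c_\theta n$ from (P2) you need $t$ bounded away from $0$ (indeed $t=\theta-q_\theta$), but then at $\psi=\theta-t$ the lens volume is genuinely larger than $s_n(q_\theta)$ by an exponential factor, so (P3) forces a strictly smaller $K$. Optimizing $t$ yields \emph{some} linear-in-$n$ improvement, but with a constant strictly below $c_\theta$; the theorem as stated, with the specific $c_\theta$, is not recovered.

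The paper sidesteps this by abandoning the pointwise condition (P3) and going directly to the underlying graph tool (\cref{thm:locally-sparse}, whose proof only needs the \emph{average} degree in each neighborhood to be $O(D/K)$). The key external input is \cref{lem:intersection-cap} from Jenssen--Joos--Perkins: for any measurable $A\subset C_\theta(x)$, the \emph{expected} value $\E_{\bm u\sim A}[s(C_\theta(\bm u)\cap A)]$ is at most $2s_n(q_\theta)$, uniformly in $A$. This average bound holds even though individual points $\bm u$ near the center of $A$ have large intersection---those points are measure-theoretically negligible. The discretization is then arranged so that the boundary pieces (the only ones not fully inside $C_\theta(x)$) are a vanishing fraction, and the average degree over the interior pieces is controlled by the lemma. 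You should replace your appeal to (P3) with this average-intersection argument.
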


This bound was very recently further improved by Gil Fern\'andez, Kim, Liu and Pikhurko~\cite{GKLP21+}.

\begin{theorem}[Gil Fern\'andez, Kim, Liu and Pikhurko]\label{thm:sph-code2}
	Let $\theta\in(0,\pi/2)$ be fixed. Then,
	$$A(n,\theta)\geq (1+o(1))\log\frac{\sin\theta}{\sqrt{2}\sin\frac{\theta}{2}}\cdot \frac{n}{s_n(\theta)}, \qquad \mbox{as } n\to\infty.$$
\end{theorem}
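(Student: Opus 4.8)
\textbf{A strategy for \cref{thm:sph-code2}.}
The plan is to keep the probabilistic viewpoint behind \cref{thm: existence of code} and \cref{thm:sph-code}, but to replace the single worst‑case use of the ball‑intersection volume by a sharp, distance‑resolved estimate; this is exactly what lets the leading constant be pushed from the weaker value implicit in \cref{thm: existence of code} up to $\log\frac{\sin\theta}{\sqrt2\sin(\theta/2)}=\tfrac12\log(1+\cos\theta)$ (the identity follows from $\sin\theta=2\sin\tfrac\theta2\cos\tfrac\theta2$). Fix $\theta\in(0,\pi/2)$ and run the hard‑core (Gibbs) point process $\Pi_\lambda$ on the unit sphere $\mathbb S^{n-1}$ at fugacity $\lambda$, in which two points conflict precisely when their angular distance is at most $\theta$; a sample of $\Pi_\lambda$ is almost surely a spherical code of angle $>\theta$, so $A(n,\theta)\ge\mathbb E|\Pi_\lambda|$ and it suffices to choose $\lambda=\lambda_n$ making $\mathbb E|\Pi_\lambda|\ge(1+o(1))\tfrac12\log(1+\cos\theta)\cdot\frac{n}{s_n(\theta)}$. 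All of this can be carried out on a sufficiently fine net of $\mathbb S^{n-1}$ and transferred back with a $(1+o(1))$ loss, so the finiteness hypothesis of \cref{thm: existence of code} is not an obstruction.

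The first step is a clean asymptotic for the cap intersection, the sphere version of the quantity in condition (P3): for unit vectors $x,y$ with $\langle x,y\rangle=\cos\phi$,
\[
\frac{\vol\big(C_\theta(x)\cap C_\theta(y)\big)}{\vol(\mathbb S^{n-1})}=\exp\!\big(n\log\sin\theta'(\phi)+O(\log n)\big),\qquad\text{where }\cos\theta'(\phi)=\frac{\cos\theta}{\cos(\phi/2)} .
\]
This is obtained by writing a point $z$ of the intersection in the orthonormal frame whose first two vectors are the mid direction $\frac{x+y}{|x+y|}$ and the difference direction $\frac{x-y}{|x-y|}$: the two cap constraints become $a\cos\tfrac\phi2\pm b\sin\tfrac\phi2\ge\cos\theta$, and Laplace's method applied to the induced $(n-2)$‑dimensional integral (whose saddle sits at $b=0$, $a=\cos\theta/\cos\tfrac\phi2$, i.e.\ on the mid direction) yields the stated rate. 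Crucially I will use this estimate not as a uniform bound but averaged against the shell profile of $\mathbb S^{n-1}$.

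The second, and main, step is a lower bound on $\mathbb E|\Pi_\lambda|$. By the Mecke/GNZ identity for the hard‑core process, $\mathbb E|\Pi_\lambda|$ equals $\lambda$ times the probability that a fixed cap $C_\theta(o)$ is vacant (after normalising the surface area of $\mathbb S^{n-1}$ to $1$), so everything reduces to a sharp lower bound on this vacancy probability. I would obtain it by an occupancy‑method / cluster‑expansion argument around the cap, in the spirit of the method behind \cref{thm:sph-code}: the chance that $C_\theta(o)$ is empty is governed by a self‑consistent ``density'' $\mu$, measuring the local conflict a candidate point feels, which is an integral of the kernel $\vol\big(C_\theta(\cdot)\cap C_\theta(\cdot)\big)$ against $\mu$, and demanding consistency with $\mu\approx\mathbb E|\Pi_\lambda|$ pins $\mu$ down. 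Feeding in the geometric estimate and optimising jointly over $\lambda$ and the expansion, the balance point turns out to be an effective cut radius $\psi$ with $\cos\psi=\sqrt{\cos\theta}$, equivalently $\sin\psi=\sqrt2\sin\tfrac\theta2$: there the exponential cap‑growth rate $\log\frac{\sin\theta}{\sin\psi}$ coincides with the averaged intersection rate, and it equals $\tfrac12\log(1+\cos\theta)$, which gives the theorem after translating densities back to expectations.

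The main obstacle is extracting this sharp constant rather than a weaker one. Applying \cref{thm: existence of code} to a fine net of $\mathbb S^{n-1}$ with the best admissible choice of $t$ yields only $(1-o(1))\log\frac{\sin\theta}{\sin\psi^\star}\cdot\frac{n}{s_n(\theta)}$, where $\psi^\star$ solves $\cos\psi^\star\cos\tfrac{\psi^\star}2=\cos\theta$; since $\cos(\psi^\star/2)>\cos\psi^\star$ one has $\cos^2\psi^\star<\cos\theta=\cos^2\psi$, hence $\psi^\star>\psi$ and $\log\frac{\sin\theta}{\sin\psi^\star}<\log\frac{\sin\theta}{\sqrt2\sin(\theta/2)}$ — the worst‑case bound of (P3) is simply too crude at the cut, because the cap intersection at angular distance $\psi$ is strictly larger than $e^{-(\text{cap growth})}s_n(\theta)$. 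Likewise, summarising the local structure by a single triangle‑density statistic loses the constant, since the ``triangle mass'' of $\mathbb S^{n-1}$ concentrates at one angle (it peaks at $\phi$ with $\cos^2\tfrac\phi2=\tfrac{1+\cos^2\theta}2$). Beating both therefore forces one to track the entire distance‑resolved correlation profile of $\Pi_\lambda$ and to show that pairs at angular distance below $\psi$ contribute negligibly; establishing the requisite spatial‑mixing bound for $\Pi_\lambda$ on $\mathbb S^{n-1}$ — or, alternatively, running a multi‑scale Rödl nibble with Wormald‑type trajectory tracking — is where the bulk of the work lies. The remaining ingredients, namely the asymptotics $s_n(\theta)=(1+o(1))\frac{\sin^{n-1}\theta}{\sqrt{2\pi n}\,\cos\theta}$, the elementary trigonometry, and the net‑to‑sphere transfer, are routine.
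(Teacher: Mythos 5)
A preliminary point: the paper does not prove \cref{thm:sph-code2}. It is quoted from Gil Fern\'andez, Kim, Liu and Pikhurko \cite{GKLP21+}, and the authors explicitly remark that this bound ``seems not attainable via discretization and requires to work directly with intrinsic properties of spherical geometry.'' So there is no in-paper argument to match yours against; your proposal has to stand on its own.

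As written, it does not: it is a research plan whose decisive step is deferred, not a proof. The preparatory material is correct — the identity $\log\frac{\sin\theta}{\sqrt{2}\sin(\theta/2)}=\frac{1}{2}\log(1+\cos\theta)$, the Laplace-method rate $s\bigl(C_\theta(x)\cap C_\theta(y)\bigr)=\exp\bigl(n\log\sin\theta'(\phi)+O(\log n)\bigr)$ with $\cos\theta'(\phi)=\cos\theta/\cos(\phi/2)$, and the diagnosis that the worst-case hypothesis (P3) of \cref{thm: existence of code} caps the constant at $\log(\sin\theta/\sin\psi^\star)$ with $\cos\psi^\star\cos(\psi^\star/2)=\cos\theta$. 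But the entire analytic content of the theorem sits in the step you yourself label ``where the bulk of the work lies'': a sharp lower bound on the vacancy probability of a cap under the hard-core process, equivalently control of the distance-resolved pair correlations showing that pairs at angular distance below $\psi$ (with $\cos\psi=\sqrt{\cos\theta}$) contribute negligibly. The ``self-consistent density'' argument you sketch is circular as stated — you determine $\mu$ by ``demanding consistency with $\mu\approx\E|\Pi_\lambda|$'', which presupposes the answer — and no convergent cluster expansion, spatial-mixing estimate, or nibble analysis is supplied for the fugacity regime you would need. Without it, the claimed balance point $\cos\psi=\sqrt{\cos\theta}$ is an assertion, not a deduction.

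A smaller but telling slip: you claim the triangle mass peaks at $\phi$ with $\cos^2(\phi/2)=\frac{1+\cos^2\theta}{2}$. That is the unconstrained critical point of $(1-m)(m-\cos^2\theta)$ in $m=\cos^2(\phi/2)$, but it lies outside the feasible range $m\ge\cos^2(\theta/2)$ (i.e.\ $\phi\le\theta$); on that range the integrand $\sin^{n}\phi\,\sin^{n}\theta'(\phi)$ is monotone and peaks at the boundary $\phi=\theta$, which is precisely why the Jenssen--Joos--Perkins constant involves $q_\theta=\theta'(\theta)$. Getting this optimization exactly right is not optional here: the whole improvement hinges on a quantitative understanding of where the pair-correlation mass concentrates, and the published proof in \cite{GKLP21+} consists essentially of the delicate spherical-geometry estimates that your proposal postpones.
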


 Although \cref{thm: existence of code} is not directly applicable to the continuous setting of spherical codes, we use discretization and the graph theoretic idea in \cref{thm: existence of code} to give a short proof of the improvement of Jenssen, Joos and Perkins \cite{JJP18} in \cref{thm:sph-code}. This answers another question of Jiang and Vardy~\cite{JV04}, who asked whether discretization approach would work for spherical codes. A closely related topic in continuous setting is the sphere packing problem, where a similar approach using integer lattice instead was utilized by Krivelevich, Litsyn and Vardy \cite{KLV04}.
 
 We remark that the best lower bound by Gil Fern\'andez, Kim, Liu and Pikhurko \cite{GKLP21+} in \cref{thm:sph-code2}, however, seems not attainable via discretization and requires to work directly with intrinsic properties of spherical geometry.

\medskip

\noindent\textbf{Organization.} 
The rest of the paper is organized as follows.  
In~\cref{sec:concentration}, we prove \cref{thm:suff-cond-small-int} and
concentration inequalities for Lipschitz functions over slices of Hamming spaces and symmetric group, see~\cref{lem:slice-concentration-q-ary,lem:slice-concentration-perm}.
We then use these concentration inequalities in \cref{sec:small volume} to deduce bounds on the volume of intersections of Hamming/Johnson/permutation balls, see~\cref{lem:volume cube,lem:volume constant-weight,lem:volume permutation}. \cref{sec:prelim} comtains some graph theoretic tools, which will be used in~\cref{sec:GV-bound} to prove~\cref{thm:sph-code,thm:q-ary,thm:constant-weight,thm:permutation} on improvements on sphere-covering bounds. The proof of~\cref{thm:list size} is given in~\cref{sec:list-decoding}.


\section{Proof of~\cref{thm:suff-cond-small-int} and concentration on the slice}
\label{sec:concentration}

In this section we will prove \cref{thm:suff-cond-small-int}, and establish some new concentration inequalities that will be used to verify~(A3) when applying \cref{thm:suff-cond-small-int}. Concentration inequalities are fundamental tools in probabilistic combinatorics and theoretical computer science for proving that nice random variables are near their means. The main principle is that a random function that smoothly depends on many independent random variables should be sharply concentrated. The new concentration inequalities we need are for functions of {\em dependent} random variables. Our proofs use coupling techniques.

\begin{proof}[Proof of~\cref{thm:suff-cond-small-int}]
 Let $T=B(a,r)\cap B(b,r)$, and let $\bm{\eta}\sim B(a,r)$. Then
   $$\frac{\vol(B(a,r)\cap B(b,r))}{\vol(B(a,r))}=\PP(\bm{\eta}\in T).$$
   By definition, $\bm{\eta}$ lies in $T$ if and only if it is of distance at most $r$ from $b$, i.e.
   $$\PP(\bm{\eta}\in T)=\PP(\mathsf{d}(\bm{\eta},b)\le r).$$
   As the metric space has exponential growth at radius $r$, $\PP(\mathsf{d}(\bm{\eta},a)\le r-\alpha k)\le 2e^{-\Omega(k)}$. Thus, 
   \begin{align*}
       \PP(\bm{\eta}\in T)&\le \PP(\bm{\eta}\in T\big| \mathsf{d}(\bm{\eta},a)>r-\alpha k)\cdot \PP(\mathsf{d}(\bm{\eta},a)>r-\alpha k)+\PP(\mathsf{d}(\bm{\eta},a)\le r-\alpha k)\\
       &\le \sum_{i=0}^{\alpha k}\PP\big(\mathsf{d}(\bm{\eta},b)\le r\big| \mathsf{d}(\bm{\eta},a)= r-i\big)\cdot \PP(\mathsf{d}(\bm{\eta},a)= r-i)+2e^{-\Omega(k)}\\
       &\le \max_{0\le i\le \alpha k}\PP\big(\mathsf{d}(\bm{\eta},b)\le r\big| \mathsf{d}(\bm{\eta},a)= r-i\big)+2e^{-\Omega(k)}.
   \end{align*}
   
   Fix an arbitrary $0\le i \le\alpha$, and let
   $\x\sim S(a,r-i)$. Note that, conditioning on $\mathsf{d}(\bm{\eta},a)=r-i$, $\bm{\eta}$ and $\x$ are identically distributed. We thus have
   \begin{align*}
       \PP\big(\mathsf{d}(\bm{\eta},b)\le r\big| \mathsf{d}(\bm{\eta},a)= r-i\big)&=\PP\big(\mathsf{d}(\bm{\eta},b)-\mathsf{d}(\bm{\eta},a)\le i\big| \mathsf{d}(\bm{\eta},a)= r-i\big)\\
       &=\PP(\mathsf{d}(\bm{\x},b)-\mathsf{d}(\x,a)\le i)\\
       &=\PP(\ell_{a,b}(\x)\le i).
   \end{align*}

   Using that $(X,\mathsf{d})$ is $(r,k)$-dispersed with constant $\alpha$, we see that $\E \ell_{a,b}(\x)\ge 2\alpha k$. Consequently, $i-\E \ell_{a,b}(\x)\le i - 2\alpha k \le -\alpha k$. Thus, since $\ell_{a,b}(\x)-\E \ell_{a,b}(\x)$ is $K$-subgaussian, we get
   \begin{align*}
       \PP(\ell_{a,b}(\x)\le i)&=\PP(\ell_{a,b}(\x)-\E \ell_{a,b}(\x)\le i-\E \ell_{a,b}(\x))\\
       &\le \PP(\ell_{a,b}(\x)-\E \ell_{a,b}(\x)\le -\alpha k)\\
       &\le 2e^{-\Omega(k^2/K)},
   \end{align*}
   as desired.
\end{proof}


\subsection{Slices of the $q$-ary cube}

One of the most natural and easy-to-verify smoothness assumptions that one may consider is the so-called bounded differences condition.

\begin{definition}[Bounded differences condition]
 A function
 $f\colon \Omega^n \rightarrow \R$ is said to satisfy the \emph{bounded differences condition} with parameters $(c_1,\ldots,c_n) \in \R^n$ if for every 
 $x,x'\in \Omega^n$
\[
|f(x)-f(x')| \le \sum_{i=1}^n c_i \mathbbm{1}_{\{x_i\ne x_i'\}}.
\]
\end{definition}

In the proof of \cref{thm:list size,thm:q-ary,thm:constant-weight} we will use the following “non-uniform” concentration inequality.

\begin{lemma}\label{lem:slice-concentration-q-ary}
Suppose $f\colon \{0,1,\ldots,q-1\}^n\rightarrow \mathbb{R}$ satisfies the bounded differences condition with parameters $(c_1,\ldots,c_n)$ and that
$\bm{\eta}$ is drawn uniformly at random from $\{0,1,\ldots,q-1\}^n$ subject to $\wt(\bm{\eta})=k$. Then
\[
\PP(|f(\bm{\eta})-\E f(\bm{\eta})|\ge t) \le 2\exp\left(-\frac{t^2}{68\sum_{i=1}^{n}c_i^2}\right) \quad \text{for all $t\ge 0$}.
\]
\end{lemma}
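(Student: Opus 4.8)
The plan is to prove this concentration inequality on the "weight-$k$ slice" of the $q$-ary cube by a martingale/coupling argument, since the coordinates of $\bm\eta$ are not independent (they are constrained by $\wt(\bm\eta)=k$). The natural comparison is with the unconstrained product measure, where McDiarmid's bounded differences inequality gives subgaussian concentration with constant proportional to $\sum_i c_i^2$; the difficulty is transferring this to the slice. I would set up an exposure martingale revealing the coordinates $\bm\eta_1,\dots,\bm\eta_n$ one at a time, and try to bound the martingale differences. The key obstruction is that revealing one coordinate changes the conditional distribution of the remaining coordinates (it shifts the remaining "weight budget"), so a single coordinate exposure does not have bounded influence in the naive sense — this is exactly why the constant is $68\sum c_i^2$ rather than something like $\sum c_i^2$, and why a coupling is needed.

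Concretely, I would use the following coupling strategy. Condition on the first $j-1$ coordinates; let $\bm\eta$ and $\bm\eta'$ be two copies of the slice distribution agreeing on coordinates $1,\dots,j-1$ but with coordinate $j$ resampled. To compare $\E[f \mid \bm\eta_1,\dots,\bm\eta_j]$ with $\E[f\mid \bm\eta_1,\dots,\bm\eta_{j-1}]$, I need a coupling of the conditional laws of $(\bm\eta_{j+1},\dots,\bm\eta_n)$ under the two different values of $\bm\eta_j$. Changing $\bm\eta_j$ from nonzero to zero (or vice versa) changes the residual weight by $1$, which changes a "choose which of the remaining $n-j$ coordinates are nonzero" distribution by one unit; such hypergeometric-type distributions can be coupled so that they differ in at most $O(1)$ coordinates in expectation (indeed, a standard coupling differs in exactly one coordinate, with the value on that coordinate also possibly differing). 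Using the bounded differences hypothesis, each martingale difference is then controlled by $c_j$ plus a term of the form $\sum_{i>j} c_i \cdot (\text{indicator the coupling differs at }i)$, and one bounds the (conditional) subgaussian norm of the $j$-th martingale difference by something like $O(c_j^2) + O(\sum_{i>j} c_i^2 / (n-j))$ or similar. Summing over $j$ and using a telescoping/rearrangement argument (each $c_i^2$ gets counted with total weight $O(1)$ across all $j<i$) yields the bound $\sum_i c_i^2$ up to an absolute constant, which the authors have tracked as $68$.

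An alternative, possibly cleaner route — and the one I would actually attempt first — is to avoid building the coupling from scratch by invoking a known concentration result for the slice/permutation setting. For instance, one can relate the weight-$k$ slice to a random subset of size $k$ of $[n]$ (choosing the support) followed by independent uniform nonzero labels on that support; the label part is a genuine product measure and handled by McDiarmid directly, while the support part is a uniformly random $k$-subset, for which concentration of Lipschitz functions is classical (e.g. via the bounded-differences inequality for sampling without replacement, or via the permutation version of Azuma–McDiarmid applied to the indicator vector). Decomposing $f(\bm\eta)$ into "conditioned on the support" and "averaged over labels," bounding each piece's fluctuation separately, and combining via a union bound over the two subgaussian events gives the result; the absolute constant degrades (hence $68$) but the structure is transparent. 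The main obstacle in either approach is the same: correctly accounting for how a change in one coordinate propagates through the dependency structure, and doing the bookkeeping so that the total variance proxy is $O(\sum_i c_i^2)$ rather than, say, $O(n \sum_i c_i^2)$.
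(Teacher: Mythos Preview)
Your second approach---decomposing $\bm\eta$ as a uniformly random support $\bm\xi\in\binom{[n]}{k}$ together with independent uniform nonzero labels $\bm u\in[q-1]^n$---is exactly what the paper does. The paper writes $\bm\eta=\bm u\star\bm\xi$, applies the binary-slice concentration (their Lemma~3.2, proved via precisely the coupling/martingale argument you sketch in your first approach) to $\bm\xi\mapsto f(\bm u\star\bm\xi)$ for each fixed $\bm u$, and applies McDiarmid to $\bm u\mapsto \E_{\bm\xi}f(\bm u\star\bm\xi)$; the two pieces are then combined at the MGF level via the subgaussian equivalence, rather than by a union bound, which is a cosmetic difference affecting only the constant. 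So both of your routes are on target: the first is how the binary building block is established, and the second is how the paper lifts it to the $q$-ary slice.
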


The binary case above is Lemma 2.1 from \cite{KST19}. For completeness, we include its short proof.

\begin{lemma}[\cite{KST19}]\label{lem:concentration-slice}
Suppose $g\colon \{0,1\}^n \rightarrow \mathbb{R}$ satisfies the bounded differences condition with parameters $(c_1,\ldots,c_n)$ and that $\bm{\xi}\in \{0,1\}^n$ is a random vector uniformly distributed in $\binom{[n]}{k}$. Then
\[
\PP(|g(\bm{\xi})-\E g(\bm{\xi})|\ge t) \le 2\exp\left(-\frac{t^2}{8\sum_{i=1}^{n}c_i^2}\right) \quad \text{for all $t\ge 0$}.
\]
\end{lemma}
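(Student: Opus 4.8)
The statement to prove is Lemma~\ref{lem:concentration-slice}: a bounded-differences function $g$ on the Boolean slice $\binom{[n]}{k}$ satisfies a subgaussian concentration inequality with variance proxy $8\sum_i c_i^2$. Here is the plan.

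\medskip

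\textbf{Approach.} The natural tool is the Azuma--Hoeffding martingale method, but applied to a random variable that is \emph{not} a sum of independent coordinates, since the slice has dependencies. The standard trick is to generate a uniformly random element $\bm{\xi}$ of $\binom{[n]}{k}$ by a sequential procedure that exposes information in $n$ steps, build the Doob martingale of $g(\bm{\xi})$ with respect to this filtration, bound the martingale differences, and invoke Azuma. The cleanest such generation procedure is to take a uniformly random \emph{permutation} $\bm{\pi}$ of $[n]$ and let $\bm{\xi}$ be the indicator of $\{\bm{\pi}(1),\dots,\bm{\pi}(k)\}$; equivalently one can think of dealing $n$ distinct cards. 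I would set $\bm{\xi} = g(\{\bm{\pi}(1),\dots,\bm{\pi}(k)\})$ and expose $\bm{\pi}(1),\bm{\pi}(2),\dots$ one at a time.

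\medskip

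\textbf{Key steps.} First, define the Doob martingale $Z_j = \E[g(\bm{\xi}) \mid \bm{\pi}(1),\dots,\bm{\pi}(j)]$ for $j=0,1,\dots,n$, so $Z_0 = \E g(\bm{\xi})$ and $Z_n = g(\bm{\xi})$. Second, bound $|Z_j - Z_{j-1}|$: conditioning on the first $j-1$ values fixed, the $j$-th exposed element is uniform among the remaining $n-j+1$ elements, and swapping the choice of $\bm{\pi}(j)$ between two candidates $u,v$ changes the resulting set by removing one element and adding another (only when exactly one of $u,v$ lands in the first $k$ positions), so by the bounded differences condition the change in $g$ is at most $c_u + c_v$. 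A coupling argument — couple two runs of the process that agree on $\bm{\pi}(1),\dots,\bm{\pi}(j-1)$ and differ at step $j$, then match up the remaining elements — yields $|Z_j - Z_{j-1}| \le \max$ over pairs of an average, which one bounds to get a step size controlled by the $c_i$'s. The slightly delicate point is that a single swap at step $j$ can cascade, but under the right coupling it corresponds to a single transposition of the final set, hence a change of at most $c_u + c_v \le 2\max_i c_i$ pointwise; more carefully, one gets $\sum_j (\text{range of } Z_j - Z_{j-1})^2 \le 4\sum_i c_i^2$ after accounting for the fact that each coordinate is ``touched'' a bounded number of times. Third, apply Azuma--Hoeffding: $\PP(|Z_n - Z_0| \ge t) \le 2\exp(-t^2 / (2\sum_j d_j^2))$ where $d_j$ bounds the range of the $j$-th difference; plugging in $\sum_j d_j^2 \le 4\sum_i c_i^2$ gives the exponent $-t^2/(8\sum_i c_i^2)$, exactly as claimed.

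\medskip

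\textbf{Main obstacle.} The crux is getting the martingale-difference bound right: naively each step-$j$ difference could be as large as $2\sum_i c_i$ if one is careless, which would ruin the constant. The key insight — and the step I would be most careful with — is that exposing $\bm{\pi}(j)$ and then \emph{averaging over all completions} only changes $g$ by a transposition-sized amount, because one can couple the two conditional distributions (before and after exposing $\bm{\pi}(j)$, or between two values of $\bm{\pi}(j)$) so that the two random sets differ in at most one or two elements. I would phrase this via an explicit bijective coupling of the remaining $n-j+1$ elements, and then the bound $|Z_j - Z_{j-1}| \le \frac{1}{n-j+1}\sum_{u} (\text{small})$ falls out; summing the squared ranges is then a short computation. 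Since this lemma is quoted from \cite{KST19}, I would present this as a streamlined exposition rather than claim novelty, and I could alternatively cite McDiarmid's bounded-differences inequality for permutations / sampling without replacement directly if a self-contained martingale argument is judged too long.
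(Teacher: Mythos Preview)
Your framework is right --- Doob martingale plus a transposition coupling plus Azuma--Hoeffding is exactly how the paper proceeds --- but your choice of filtration creates a gap at the crucial step. Revealing $\bm{\pi}(1),\bm{\pi}(2),\dots$ means that at step $j$ the coupling bound you correctly derive is $|Z_j-Z_{j-1}|\le c_u+c_v$ where $u=\bm{\pi}(j)$ and $v$ is some other remaining element. To feed this into Azuma you need an $\mathcal F_{j-1}$-measurable (hence worst-case over $u$) bound, which collapses to $d_j\le 2\max_i c_i$. Summing $d_j^2$ over the $k$ nontrivial steps then gives $4k\max_i c_i^2$, not $4\sum_i c_i^2$; these differ badly when the $c_i$ are far from equal (e.g.\ $c_1=1$, $c_2=\cdots=c_n=0$). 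Your sentence ``each coordinate is touched a bounded number of times'' does not repair this, because the deterministic step bound has already discarded the information about which coordinate was touched.

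The paper's fix is a different, simpler filtration: first reorder so that $c_1\ge c_2\ge\cdots\ge c_n$, and then reveal the \emph{coordinates} $\xi_1,\xi_2,\dots,\xi_n$ in that order. The coupling between the conditional laws given $\xi_i=0$ and $\xi_i=1$ (with $\xi_1,\dots,\xi_{i-1}$ fixed) swaps coordinate $i$ with a single later coordinate $j>i$, and since $c_j\le c_i$ the coupled values of $g$ differ by at most $c_i+c_j\le 2c_i$. Thus $|Z_i-Z_{i-1}|\le 2c_i$ \emph{deterministically}, and $\sum_i(2c_i)^2=4\sum_i c_i^2$ falls out with no further work. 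The sorting step is the idea you are missing; with it the ``short computation'' you allude to becomes trivial, and without it your permutation filtration does not deliver the claimed variance proxy.
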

\begin{proof}
We may assume without loss of generality that $c_{1}\ge\dots\ge c_{n}$.
Consider the Doob martingale $\bm{Z}_{i}=\E\left[g\left(\bm{\xi}\right)\middle|\xi_{1},\dots\xi_{i}\right]$,
so $\bm{Z}_{0}=\E g\left(\bm{\xi}\right)$ and $\bm{Z}_{n}=\bm{Z}_{n-1}=g\left(\bm{\xi}\right)$.
Let $\cL\left(x_{1},\dots,x_{i}\right)$ be the conditional distribution
of $\bm{\xi}$ given $\xi_{1}=x_{1},\dots,\xi_{i}=x_{i}$.

We want to show that
\[
\left|\E\left[g\left(\cL\left(x_{1},\dots,x_{i-1},0\right)\right)\right]-\E\left[g\left(\cL\left(x_{1},\dots,x_{i-1},1\right)\right)\right]\right|\le2c_{i}
\]
for all feasible $x_{1},\dots,x_{i-1}\in\left\{ 0,1\right\} $; this
will imply that $\left|\bm{Z}_{i}-\bm{Z}_{i-1}\right|$ is uniformly bounded
by $2c_{i}$, so the desired result will follow from the Azuma--Hoeffding
bound (see for example \cite[Theorem~22.16]{FK15}).

If $\bm{\xi}$ is distributed as $\cL\left(x_{1},\dots,x_{i-1},0\right)$,
we can change $\xi_{i}$ to 1 and then randomly choose one of the
ones among $\xi_{i+1},\dots,\xi_{n}$ and change it to 0; we thereby
obtain the distribution $\cL\left(x_{1},\dots,x_{i-1},1\right)$. This
provides a coupling between $\cL\left(x_{1},\dots,x_{i-1},0\right)$
and $\cL\left(x_{1},\dots,x_{i-1},1\right)$ that differs in only two
coordinates $i$ and $j>i$, and since $c_{j}\le c_{i}$ this implies
the required bound.
\end{proof}


We also require some standard facts about subgaussian random variables (see for instance \cite[Proposition 2.5.2]{Ver18}).

\begin{lemma}[Subgaussian properties]\label{lem: subgauss-equiv}
Let $\bm{X}$ be a random variable with mean zero. Then the following properties are equivalent.
\begin{itemize}
\item[(i)] There exists $K_1>0$ such that the tails of $\bm{X}$ satisfy
    \[
    \PP(|\bm{X}|\ge t) \le 2\exp\left(-t^2/K_1\right) \quad \text{for all} \enskip t\ge 0.
    \]
\item[(ii)] There exists $K_2>0$ such that the moment generating function of $\bm{X}$ satisfies
\[
\E\exp(\lambda \bm{X}) \le \exp\left(K_2\lambda^2\right) \quad \text{for all} \enskip \lambda \ge 0.
\]
\end{itemize}
In particular, for $(i)\implies (ii)$, we can take $K_2=2K_1$ and for $(ii)\implies (i)$, we can take $K_1=4K_2$. 
\end{lemma}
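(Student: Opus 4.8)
The plan is to prove the two implications separately, tracking the constants as we go; both are classical arguments. For the direction (ii)$\Rightarrow$(i) I would use a Chernoff-type bound: for $\lambda\ge 0$ and $t\ge 0$, Markov's inequality applied to the nonnegative variable $\exp(\lambda\bm X)$ gives
\[
\PP(\bm X\ge t)=\PP\big(\exp(\lambda\bm X)\ge\exp(\lambda t)\big)\le e^{-\lambda t}\,\E\exp(\lambda\bm X)\le e^{-\lambda t+K_2\lambda^2}.
\]
The exponent $-\lambda t+K_2\lambda^2$ is minimised at $\lambda=t/(2K_2)$, which yields $\PP(\bm X\ge t)\le e^{-t^2/(4K_2)}$. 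Running the same computation for $-\bm X$ bounds the lower tail, and a union bound gives $\PP(|\bm X|\ge t)\le 2e^{-t^2/(4K_2)}$, i.e.\ property~(i) with $K_1=4K_2$.

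For (i)$\Rightarrow$(ii) I would use the moment method. Starting from the layer-cake identity $\E|\bm X|^p=\int_0^\infty p\,t^{p-1}\,\PP(|\bm X|\ge t)\,dt$, the tail hypothesis together with the substitution $t=\sqrt{K_1 s}$ gives, for every integer $p\ge 1$,
\[
\E|\bm X|^p\le 2p\int_0^\infty t^{p-1}e^{-t^2/K_1}\,dt=pK_1^{p/2}\,\Gamma(p/2)=2K_1^{p/2}\,\Gamma\!\big(\tfrac p2+1\big).
\]
In particular all moments are finite and $\E e^{\lambda|\bm X|}=\sum_{p\ge 0}\lambda^p\,\E|\bm X|^p/p!<\infty$ for every $\lambda$, which licenses the expansion $\E\exp(\lambda\bm X)=\sum_{p\ge 0}\lambda^p\,\E\bm X^p/p!$. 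Using the mean-zero hypothesis to discard the $p=1$ term, bounding $\E\bm X^p\le\E|\bm X|^p$ (legitimate since $\lambda\ge 0$), and inserting the moment estimates, one gets
\[
\E\exp(\lambda\bm X)\le 1+\sum_{p\ge 2}\frac{\lambda^p}{p!}\cdot 2K_1^{p/2}\,\Gamma\!\big(\tfrac p2+1\big),
\]
and it remains to verify that this series is at most $e^{2K_1\lambda^2}$, which then gives property~(ii) with $K_2=2K_1$.

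The main obstacle is precisely this last series comparison: to land on exactly the constant $2K_1$ (rather than merely some absolute multiple of $K_1$) one must be somewhat careful with the $\Gamma$-factors and with the odd-index terms — a clean way is to split the sum over even and odd $p$ and compare it termwise against the series $e^{2K_1\lambda^2}=\sum_{m\ge 0}(2K_1\lambda^2)^m/m!$. If one is content with the qualitative equivalence (i)$\Leftrightarrow$(ii) with unspecified absolute constants, this step collapses to a crude one-line bound, and everything else — the Chernoff estimate, the layer-cake computation, and the Fubini-type interchange of sum and expectation — is entirely routine.
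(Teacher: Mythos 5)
The paper never proves this lemma: it is quoted as a standard fact with a pointer to Vershynin's book (Proposition 2.5.2), so there is no in-paper argument to compare against. Your proposal is exactly the standard proof of that proposition, and the overall route is sound, but two points need attention before it is a complete proof of the statement as written.

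First, in (ii)$\Rightarrow$(i), ``running the same computation for $-\bm{X}$'' uses the bound $\E\exp(-\lambda\bm{X})\le\exp(K_2\lambda^2)$, i.e.\ the moment-generating-function hypothesis at \emph{negative} arguments, which condition (ii) as stated (only $\lambda\ge 0$) does not supply; with the hypothesis taken literally, one controls only the upper tail $\PP(\bm{X}\ge t)$, and (ii)$\Rightarrow$(i) is in fact false for a variable with a heavy left tail. This is really a blemish in the lemma's formulation (Vershynin quantifies over all $\lambda\in\R$), and it is harmless in the paper's applications because there the MGF bound is always derived from a two-sided tail bound and therefore holds for all real $\lambda$; still, your write-up should either restate (ii) for all $\lambda\in\R$ or say explicitly that you apply it to $-\bm{X}$ as well. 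Second, your direction (i)$\Rightarrow$(ii) stops at the series comparison, which is the only nontrivial step. It does close with the claimed constant along the lines you sketch: for even $p=2m$ one has $\Gamma(m+1)/(2m)!=1/\big(\binom{2m}{m}m!\big)\le 1/(2^m m!)$, so the even part is at most $2(e^{K_1\lambda^2/2}-1)$, while for odd $p=2m+1$ the identity $\Gamma(m+\tfrac32)/(2m+1)!=\sqrt{\pi}/(2\cdot 4^m m!)$ collapses the odd part to $\sqrt{\pi}\,\lambda\sqrt{K_1}\,(e^{K_1\lambda^2/4}-1)$, and the resulting total is checked to be at most $e^{2K_1\lambda^2}$ for all $\lambda\ge0$. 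So the approach is correct and the constants $K_2=2K_1$, $K_1=4K_2$ are attainable this way, but as submitted the argument is not finished.
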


We now have all the tools to prove \cref{lem:slice-concentration-q-ary}.

\begin{proof}[Proof of \cref{lem:slice-concentration-q-ary}]
Let $\bm{\xi}\in \{0,1\}^n$ be a random vector uniformly distributed in $\binom{[n]}{k}$. Let $\bm{u}$ be drawn uniformly from $[q-1]^n$, independently from $\bm{\xi}$. Then the distribution of $\bm{\eta}$ coincides with the distribution of
\[
\bm{u}\star \bm{\xi}:=(u_1\xi_1,\ldots,u_n\xi_n). 
\]
Writing $\|c\|^2=\sum_{i=1}^nc_i^2$, by \cref{lem: subgauss-equiv}, it suffices to show that
\begin{equation}\label{eq:mangolassi}
    \E_{\bm{u}}\E_{\bm{\xi}}e^{\lambda(f(\bm{u}\star \bm{\xi})-\E_{\bm{u},\bm{\xi}}f(\bm{u}\star \bm{\xi}))}\le e^{17\| c \|^2 \lambda^2}.
\end{equation}

Fix an instance of $\bm{u}$. Note that, as $f(\cdot)$, $f(\bm{u}\star\cdot)$ also satisfies the bounded differences condition with parameters $c=(c_1,\ldots,c_n)$. Then, by \cref{lem:concentration-slice} with $f(\bm{u}\star \cdot)$ playing the role of $g(\cdot)$ and \cref{lem: subgauss-equiv}, we get that
\[
\E_{\bm{\xi}}e^{\lambda(f(\bm{u}\star \bm{\xi})-\E_{\bm{\xi}}f(\bm{u}\star \bm{\xi}))}\le e^{16\|c\|^2\lambda^2}.
\]
Thus,
\begin{align}
    \E_{\bm{u}}\E_{\bm{\xi}}e^{\lambda(f(\bm{u}\star \bm{\xi})-\E_{\bm{u},\bm{\xi}}f(\bm{u}\star \bm{\xi}))}&= e^{-\lambda \E_{\bm{u},\bm{\xi}}f(\bm{u}\star \bm{\xi})}\cdot \E_{\bm{u}}e^{\lambda \E_{\bm{\xi}}f(\bm{u}\star \bm{\xi})}\E_{\bm{\xi}}e^{\lambda (f(\bm{u}\star \bm{\xi})-\E_{\bm{\xi}}f(\bm{u}\star \bm{\xi}))}\nonumber\\
    &\le e^{16\|c\|^2\lambda^2}\cdot \E_{\bm{u}}e^{\lambda (\E_{\bm{\xi}}f(\bm{u}\star \bm{\xi})- \E_{\bm{u}}\E_{\bm{\xi}}f(\bm{u}\star \bm{\xi}) )}.\label{newyearcoming}
\end{align}
It is easy to check that $g(\cdot):=\E_{\bm{\xi}}f(\cdot\star\bm{\xi})$ also has the bounded differences condition with parameters $c$. Thus by McDiarmid's inequality (see for example \cite[Theorem~22.17]{FK15}), $$\PP(|g(\bm{u})-\E_{\bm{u}}g(\bm{u})|\ge t)\le 2e^{-\frac{2t^2}{\|c\|^2}}$$ 
and so by Lemma~\ref{lem: subgauss-equiv}, $$\E_{\bm{u}}e^{\lambda(g(\bm{u})-\E_{\bm{u}}g(\bm{u}))}\le e^{\|c\|^2\lambda^2}.$$
This, together with~\eqref{newyearcoming}, implies~\eqref{eq:mangolassi} and completes the proof.
\end{proof}


\subsection{Slices of the symmetric group}

The proof of \cref{thm:permutation} relies on the following concentration inequality for functions over slices of the symmetric group. We define the \emph{weight} of a permutation $\sigma$ in $S_n$ to be the Hamming distance between $\sigma$ and the identity.

\begin{lemma}\label{lem:slice-concentration-perm}
Let $S_{n,k}$ be the set of all permutations in $S_n$ with weight $k$. Suppose $f\colon S_{n,k}\rightarrow \mathbb{R}$
satisfies
\begin{equation}\label{eq:Lipschitz-permutation}
|f(\sigma)-f(\tau)| \le \Delta(\sigma,\tau) \quad \text{for all $\sigma,\tau \in S_{n,k}$}.
\end{equation}
Let $\bm{\sigma}$ be drawn uniformly at random from $S_{n,k}$. Then
\[
\PP(|f(\bm{\sigma})- \E f(\bm{\sigma})| \ge t) \le 2\exp(-t^2/72k) \quad \text{for all $t\ge 0$}.
\]
\end{lemma}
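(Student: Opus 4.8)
The plan is to follow the two-layer strategy already used for \cref{lem:slice-concentration-q-ary}: decompose a uniformly random weight-$k$ permutation into (a) its support and (b) the way it deranges that support, bound the moment generating function contributed by each layer, and recombine. Precisely, let $\bm\sigma$ be uniform on $S_{n,k}$; then its support $\bm A:=\{i\in[n]:\bm\sigma(i)\neq i\}$ is uniform on $\binom{[n]}{k}$, and conditionally on $\bm A=A$ the restriction $\bm\sigma|_A$ is a uniform derangement of $A$ (read as a permutation of $[n]$ fixing $[n]\setminus A$). Writing $g(A):=\E[f(\bm\sigma)\mid\bm A=A]$, I would split $f(\bm\sigma)-\E f(\bm\sigma)=\big(f(\bm\sigma)-g(\bm A)\big)+\big(g(\bm A)-\E f(\bm\sigma)\big)$ and bound each piece by a separate concentration inequality: one on $\binom{[n]}{k}$, and one on the set of derangements of a fixed $k$-element set, uniformly in that set.

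For the support piece, the first step is to show that $g$ changes by at most $3$ under a single-element swap of its argument. If $A'=(A\setminus\{a\})\cup\{a'\}$ with $a\in A$ and $a'\notin A$, then conjugation by the transposition $(a\,a')$ is a bijection from the derangements of $A$ onto those of $A'$; for a matched pair $\rho,\rho'$ one checks at once that $\rho$ and $\rho'$ disagree in exactly the three coordinates $a$, $a'$, $\rho^{-1}(a)$, so the $1$-Lipschitz hypothesis on $f$ gives $|g(A)-g(A')|\le 3$. Feeding this into \cref{lem:concentration-slice} (after a McShane extension of $g$ to $\{0,1\}^n$ with bounded-difference parameters $c_i\equiv 3/2$) already shows that $g(\bm A)$ is $O(n)$-subgaussian, which suffices for every application in this paper, where the relevant weights are linear in $n$; to obtain the sharper $O(k)$ one instead runs a Doob martingale revealing the $k$ elements of $\bm A$ in a uniformly random order, where the same swap bound makes each increment at most $3$ and Azuma--Hoeffding over $k$ steps finishes it.

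For the derangement piece I would fix the support --- say it is $[k]$, so that we are looking at the uniform measure on the set $D_k$ of derangements of $[k]$ --- and prove that any $f$ with $|f(\rho)-f(\tau)|\le\Delta(\rho,\tau)$ on $D_k$ is $O(k)$-subgaussian. Here I would use the Doob martingale $\bm Z_i:=\E[f(\bm\sigma)\mid\sigma(1),\dots,\sigma(i)]$ and bound $|\bm Z_i-\bm Z_{i-1}|$ by coupling the conditional law of $\bm\sigma$ given $\big(\sigma(1),\dots,\sigma(i-1),\sigma(i)=p\big)$ with the one given $\big(\sigma(1),\dots,\sigma(i-1),\sigma(i)=q\big)$ so that the coupled derangements agree off a bounded set of coordinates: transposing the images of $i$ and of $\sigma^{-1}(q)$ does it unless that would create a fixed point (namely when $\sigma(p)=q$), in which case a $3$-cycle rotation supported on three positions does; this yields a genuine bijection between the two conditional supports, hence a measure-preserving coupling with at most $3$ altered coordinates. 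Azuma--Hoeffding over the at most $k$ steps then gives the tail, uniformly in the support.

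Finally I would recombine. Conditioning on $\bm A$ and using \cref{lem: subgauss-equiv} to pass between tails and moment generating functions,
\[ \E e^{\lambda f(\bm\sigma)}=\E_{\bm A}\big[e^{\lambda g(\bm A)}\,\E[e^{\lambda(f(\bm\sigma)-g(\bm A))}\mid\bm A]\big]\le e^{O(k)\lambda^2}\,\E_{\bm A}e^{\lambda g(\bm A)}\le e^{O(k)\lambda^2}\cdot e^{\lambda\E f(\bm\sigma)}, \]
so $f(\bm\sigma)-\E f(\bm\sigma)$ has an $O(k)$-subgaussian moment generating function; one more application of \cref{lem: subgauss-equiv} turns this into the stated tail bound, and a routine accounting of the constants yields the bound in the statement. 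The crux --- and the step where the permutation setting genuinely departs from the product and slice settings handled earlier --- is the derangement piece: in the full symmetric group a single transposition always repairs a changed value, whereas the no-fixed-point constraint forces occasional $3$-cycle corrections (and some low-dimensional edge cases, e.g.\ when few positions remain unrevealed), and the delicate point is to organise these corrections into an honest measure-preserving coupling of the two conditional distributions while keeping the number of altered coordinates bounded by an absolute constant.
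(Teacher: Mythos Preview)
Your proposal is correct and follows essentially the same two-layer decomposition as the paper: the paper likewise generates $\bm\sigma$ by first drawing the support $\bm I\sim\binom{[n]}{k}$ and then a uniform derangement on $\bm I$, proves the same $|g(J)-g(J')|\le 3$ bound via exactly your conjugation-by-$(a\,a')$ bijection, and then splits $|f(\bm\sigma)-\mu|$ into the two pieces. The only differences are cosmetic: the paper invokes Bobkov's slice inequality and Wormald's derangement inequality as black boxes (rather than redoing the Doob-martingale arguments you sketch), and it recombines at the tail-probability level via a direct triangle inequality $\PP(|f-\mu|\ge t)\le \PP(|g(\bm I)-\mu|\ge t/2)+\PP(|f-g(\bm I)|\ge t/2)$ rather than through moment generating functions.
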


To prove \cref{lem:slice-concentration-perm} we will use a coupling argument together with two well-known concentration inequalities. The first is a simple consequence of the Azuma--Hoeffding bound, obtained by Wormald \cite[Theorem 2.19]{Wormald}. 

\begin{theorem}[Wormald \cite{Wormald}]
\label{thm:derangement-concent}
Let $D_n \subset S_n$ be the set of derangements, that is, $\sigma \in D_n$ if and only if $\sigma(i) \ne i$ for all $i\in [n]$.
Suppose $f\colon D_n\rightarrow \mathbb{R}$ satisfies 
\[
|f(\sigma)-f(\tau)| \le \Delta(\sigma,\tau) \quad \text{for all $\sigma,\tau \in D_n$}.
\]
Let $\bm{\sigma}$ be a uniformly random element of $D_n$. Then
\[
\PP(|f(\bm{\sigma})-\E f(\bm{\sigma})|\ge t) \le 2\exp(-t^2/4n) \quad \text{for all $t\ge 0$}.
\]
\end{theorem}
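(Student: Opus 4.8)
The plan is to run a bounded-differences (Doob exposure) martingale on the coordinates of $\boldsymbol\sigma$, with the increments controlled by an explicit coupling, and then apply the Azuma--Hoeffding inequality. Draw $\boldsymbol\sigma$ uniformly from $D_n$ and expose its values in the order $\boldsymbol\sigma(1),\dots,\boldsymbol\sigma(n)$, setting $\mathbf Z_i=\E\bigl[f(\boldsymbol\sigma)\mid\boldsymbol\sigma(1),\dots,\boldsymbol\sigma(i)\bigr]$, so $\mathbf Z_0=\E f(\boldsymbol\sigma)$ and $\mathbf Z_n=f(\boldsymbol\sigma)$. For a prefix $(a_1,\dots,a_{i-1})$ that extends to at least one derangement, write $\cL(a_1,\dots,a_{i-1})$ for the conditional law of $\boldsymbol\sigma$ given $\boldsymbol\sigma(j)=a_j$ for $j<i$, and likewise $\cL(a_1,\dots,a_i)$. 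The whole argument reduces to showing that for every such prefix and all feasible continuations $a_i\ne a_i'$,
\[
\bigl|\E f\bigl(\cL(a_1,\dots,a_{i-1},a_i)\bigr)-\E f\bigl(\cL(a_1,\dots,a_{i-1},a_i')\bigr)\bigr|\le c
\]
for an absolute constant $c$; this bounds $|\mathbf Z_i-\mathbf Z_{i-1}|$, and Azuma--Hoeffding over the $n$ steps then gives a subgaussian tail which, after tracking constants, is of the stated form $2\exp(-t^2/4n)$.

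To prove the displayed inequality I would construct a coupling of a sample $\boldsymbol\pi\sim\cL(\dots,a_i)$ with a sample $\boldsymbol\pi'\sim\cL(\dots,a_i')$ under which $\Delta(\boldsymbol\pi,\boldsymbol\pi')$ is bounded by an absolute constant; since $f$ is $1$-Lipschitz for the Hamming metric on $D_n$, taking expectations in $|f(\boldsymbol\pi)-f(\boldsymbol\pi')|\le\Delta(\boldsymbol\pi,\boldsymbol\pi')$ yields the bound. Given $\boldsymbol\pi$ (which agrees with the fixed prefix and has $\boldsymbol\pi(i)=a_i$), set $j=\boldsymbol\pi^{-1}(a_i')$; since $a_i'$ is feasible at position $i$ we have $j>i$. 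If $j\ne a_i$, simply swap the values at positions $i$ and $j$ in $\boldsymbol\pi$: the result is again a derangement with the right prefix and value $a_i'$ at position $i$, at Hamming distance $2$ from $\boldsymbol\pi$. The one obstruction is $j=a_i$, i.e.\ $\boldsymbol\pi$ routes $i\mapsto a_i\mapsto a_i'$; there a two-coordinate change necessarily puts a fixed point at position $a_i$, so one instead performs a three-coordinate rotation, re-routing the values $a_i$, $a_i'$ and $\boldsymbol\pi(m)$ at positions $i$, $a_i$, $m$ for a suitably chosen not-yet-exposed position $m$ (which exists whenever $i\le n-2$; the final two exposure steps, where at most two completions remain, are handled by hand). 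One then checks that these local moves assemble into a genuine coupling of the two conditional laws and that every coupled pair has Hamming distance at most $3$.

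I expect the coupling bookkeeping in the previous paragraph to be the main obstacle. It is slightly delicate because the number of derangement-completions of a prefix depends on whether the last exposed value is itself a not-yet-exposed position, so the two conditional laws $\cL(\dots,a_i)$ and $\cL(\dots,a_i')$ live on sets of (slightly) different sizes and the local move cannot be a bijection; one has to choose the auxiliary position $m$ by a reversible rule and redistribute the leftover mass so that the coupled marginals come out exactly uniform. Once the increment bound $|\mathbf Z_i-\mathbf Z_{i-1}|\le c$ is in place, Azuma--Hoeffding (or, equivalently, the moment generating function formulation via \cref{lem: subgauss-equiv}) gives $\PP(|f(\boldsymbol\sigma)-\E f(\boldsymbol\sigma)|\ge t)\le 2\exp(-t^2/4n)$; pinning the absolute constant at $1/4$ is where one follows Wormald's accounting, e.g.\ by spreading each three-coordinate fix-up across two consecutive exposure steps so that every single step moves only two coordinates. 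As an alternative route, one can instead extend $f$ to a $1$-Lipschitz function on all of $S_n$, use the (fixed-point-free) concentration of Lipschitz functions of a uniform permutation, and transfer to $D_n$ by conditioning on having no fixed point, which costs only a bounded factor $n!/|D_n|\to e$ in the tail and a transportation-cost correction of order $\sqrt n$ in the centering.
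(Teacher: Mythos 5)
The paper does not actually prove this statement; it is quoted from Wormald \cite{Wormald} as a known black box, so there is no internal proof to compare against. Your plan --- expose $\bm{\sigma}(1),\dots,\bm{\sigma}(n)$, bound the Doob increments via a swap/rotation coupling, and finish with Azuma--Hoeffding --- is exactly the mechanism behind Wormald's theorem, and your case analysis for the local move (swap positions $i$ and $j=\bm{\pi}^{-1}(a_i')$, with a three-coordinate rotation when $j=a_i$ would create a fixed point) is the right one.

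Two genuine gaps remain. First, the coupling is never constructed: as you yourself note, the two conditional laws are uniform on supports of different sizes, so your pointwise local move cannot be measure-preserving, and ``choose $m$ by a reversible rule and redistribute the leftover mass'' is precisely the content of the lemma rather than a detail; a cleaner route is to bound the transportation cost (w.r.t.\ Hamming distance) between the two conditional laws directly, but that still has to be carried out. Second, the constant does not come out of your argument. With worst-case increment $3$ (the rotation case) Azuma--Hoeffding yields $2\exp(-t^2/(18n))$, and even with increment $2$ it yields $2\exp(-t^2/(8n))$, not $2\exp(-t^2/(4n))$; the proposed fix of ``spreading the three-coordinate move across two consecutive exposure steps'' is not meaningful for a Doob martingale, whose increments are determined by the conditional expectations and not by the coupling you use to estimate them. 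The fallback route via extension to $S_n$ and conditioning on having no fixed point is also unsound for the stated inequality: it recenters $f$ at $\E f$ over $S_n$ rather than at $\E[f(\bm{\sigma})]$ over $D_n$, and the $O(\sqrt{n})$ shift between the two centerings is of the same order as the smallest $t$ for which the claimed bound is nontrivial. For the paper's downstream use (\cref{lem:slice-concentration-perm}, whose constants are not optimized) any bound of the form $2\exp(-t^2/(Cn))$ with an absolute constant $C$ would suffice, and your outline plausibly delivers that once the coupling is completed; but as a proof of the theorem as stated, the constant $4$ is not established.
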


We also need a Gaussian bound for Lipshitz functions on slices of the discrete cube, due to Bobkov \cite[Theorem 2.1]{Bobkov}.

\begin{theorem}[Bobkov \cite{Bobkov}]
\label{thm:uniform concentration-slice}
Let $g\colon \binom{[n]}{k} \rightarrow \R$ be a function such that
\[
|g(J)-g(J')| \le 1.
\]
for any $J,J'\in \binom{[n]}{k}$ with $|J\cap J'|=k-1$.
Let $\bm{I}$ be a uniformly random element of $\binom{[n]}{k}$. Then
\[
\PP\left(|g(\bm{I})-\E g(\bm{I})| \ge t\right) \le 2 \exp\left(-\frac{t^2}{\min\{k,n-k\}}\right) \quad \text{for all $t\ge 0$}.
\]
\end{theorem}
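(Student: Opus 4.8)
The plan is to prove this by a Doob-martingale argument resting on a transposition coupling, in the same spirit as the proof of \cref{lem:concentration-slice}, using that a uniform $k$-subset can be generated from a uniform permutation. Let $\bm{\pi}$ be a uniformly random permutation of $[n]$ and set $\bm{I}=\{\bm{\pi}(1),\ldots,\bm{\pi}(k)\}$, which is uniform in $\binom{[n]}{k}$. Assume first $k\le n/2$, reveal the values $\bm{\pi}(1),\bm{\pi}(2),\ldots,\bm{\pi}(k)$ one at a time, and form the Doob martingale $\bm{Z}_i=\E[g(\bm{I})\mid \bm{\pi}(1),\ldots,\bm{\pi}(i)]$, so $\bm{Z}_0=\E g(\bm{I})$ and $\bm{Z}_k=g(\bm{I})$ since the first $k$ values already determine $\bm{I}$. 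The crucial point, which is exactly what produces the variance proxy $k$ instead of $n$, is that this martingale has length only $k$.

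Next I would bound the conditional range of each increment by $1$. Write $\cL(a_1,\ldots,a_j)$ for the conditional law of $\bm{I}$ given $\bm{\pi}(1)=a_1,\ldots,\bm{\pi}(j)=a_j$, fix a feasible prefix $a_1,\ldots,a_{i-1}$, and take two admissible values $a\ne a'$ for $\bm{\pi}(i)$. Starting from a sample of $\cL(a_1,\ldots,a_{i-1},a)$, swap the value $a$ at position $i$ with the value $a'$ at its position $j$ (necessarily $j\ge i$, as $a'\notin\{a_1,\ldots,a_{i-1}\}$); this single transposition produces a sample of $\cL(a_1,\ldots,a_{i-1},a')$. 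If $j\le k$ the set $\bm{I}$ is unchanged; if $j>k$ it changes from some $J$ to $(J\setminus\{a\})\cup\{a'\}$, a pair with $|J\cap J'|=k-1$, so $g$ changes by at most $1$ by hypothesis. Hence $\bigl|\E g\bigl(\cL(a_1,\ldots,a_{i-1},a)\bigr)-\E g\bigl(\cL(a_1,\ldots,a_{i-1},a')\bigr)\bigr|\le 1$, so conditionally on $\bm{\pi}(1),\ldots,\bm{\pi}(i-1)$ the variable $\bm{Z}_i$ lies in an interval of length at most $1$; in particular $\bm{Z}_i-\bm{Z}_{i-1}$ has conditional mean $0$ and conditional range at most $1$. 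Hoeffding's lemma gives $\E[e^{\lambda(\bm{Z}_i-\bm{Z}_{i-1})}\mid\mathcal{F}_{i-1}]\le e^{\lambda^2/8}$; multiplying over the $k$ steps and optimising $\lambda$ yields $\PP(|g(\bm{I})-\E g(\bm{I})|\ge t)\le 2e^{-2t^2/k}$, a little stronger than claimed. For $k>n/2$ one reveals $\bm{\pi}(k+1),\ldots,\bm{\pi}(n)$ instead — these determine $[n]\setminus\bm{I}$, hence $\bm{I}$, after $n-k$ steps — and since complementation $J\mapsto [n]\setminus J$ is an isometry of the Johnson graph the Lipschitz hypothesis transfers and the same coupling applies, giving variance proxy $n-k$. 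Combining the two cases gives $\min\{k,n-k\}$.

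I do not anticipate a serious obstacle. The two points needing care are (i) recognising that the martingale is already constant after $\min\{k,n-k\}$ reveals, which is what makes the bound insensitive to $n$, and (ii) checking that the transposition coupling moves $\bm{I}$ by at most one edge of the Johnson graph, so that the hypothesis $|g(J)-g(J')|\le1$ for $|J\cap J'|=k-1$ is applicable as stated. A conceptually cleaner but less self-contained alternative is to derive the bound from the logarithmic Sobolev inequality for the Bernoulli--Laplace model together with Herbst's argument; the martingale route above avoids importing that constant, and since $2e^{-2t^2/\min\{k,n-k\}}\le 2e^{-t^2/\min\{k,n-k\}}$ it already implies the stated inequality.
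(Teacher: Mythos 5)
Your proof is correct, but note that the paper does not actually prove this statement: \cref{thm:uniform concentration-slice} is imported as a black box from Bobkov \cite{Bobkov}, so there is no in-paper argument to compare against. What you supply is a self-contained alternative in the spirit of the paper's own \cref{lem:concentration-slice}: a Doob martingale with a single-transposition coupling to bound each conditional increment. The decisive difference from \cref{lem:concentration-slice} is the choice of filtration --- you expose the first $k$ (or last $n-k$) values of a uniform permutation rather than the $n$ coordinates of the indicator vector, so the martingale has length $\min\{k,n-k\}$ instead of $n$; that is exactly what upgrades the variance proxy from $n$ to $\min\{k,n-k\}$ in the uniform-Lipschitz case, and it is the correct key observation. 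The coupling step is sound (the swap map is a measure-preserving bijection between the two conditional laws, and it moves $\bm{I}$ by at most one Johnson edge), the reduction of the case $k>n/2$ via complementation is legitimate since $J\mapsto[n]\setminus J$ preserves the adjacency $|J\cap J'|=k-1$, and the Hoeffding-lemma bookkeeping gives $2e^{-2t^2/\min\{k,n-k\}}$, which indeed implies the stated inequality with room to spare. The only caveats are cosmetic: $\E g(\cL(\cdots))$ is an abuse of notation for the expectation of $g$ under that conditional law, and you should state explicitly that $\bm{Z}_i$, viewed as a function of the admissible values of $\bm{\pi}(i)$, has diameter at most $1$ so that Hoeffding's lemma applies with range $1$. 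Neither affects correctness.
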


\begin{proof}[Proof of \cref{lem:slice-concentration-perm}]
For $J\subseteq [n]$, let $D_J$ be the set of all permutations which has exactly $[n]\setminus J$ as the set of fixed points, in other words,
\begin{equation}\label{eq:derang}
    D_J  = \{ \sigma \in S_n\colon \sigma(i) \neq  i \enskip \text{if and only if} \enskip i\in J \}.
\end{equation}
It is not difficult to see that the following two-step random process 
yields the uniform distribution on the set of all permutations in $S_n$ with weight $k$:
\begin{enumerate}
    \item[1.] Choose a set $\bm{I}$ uniformly at random from $\binom{[n]}{k}$, and
    \item[2.] Choose a permutation $\bm{\sigma}$ uniformly at random from $D_{\bm{I}}$.
\end{enumerate}
For a set $J\in \binom{[n]}{k}$, let $g(J)$ be the average of $f$ over $D_J$, that is,
\[g(J) = \mathbb{E}_{\bm{I},\bm{\sigma}}[f(\bm{\sigma}) \mid \bm{I}=J] = \E_{ \bm{\sigma}\sim D_J} f(\bm{\sigma}).\]

\begin{claim}\label{cl: bdd property g}
For any $J,J'\in \binom{[n]}{k}$ with $|J\cap J'|=k-1$, we have
\[
|g(J)-g(J')| \le 3.
\]
\end{claim}

\begin{poc}
Let $j$ be the element in $J\setminus J'$ and $j'$ be the element in $J'\setminus J$.
For each permutation $\sigma \in D_J$, we define a permutation $\widetilde{\sigma} \in D_{J'}$ as follows: 
\begin{itemize}
  \item $\widetilde{\sigma}(i)=i$ for all $i \notin \{j,j',\sigma^{-1}(j)\}$,
  \item $\widetilde{\sigma}(j)=j$,
  \item $\widetilde{\sigma}(j')=\sigma(j)$, and 
  \item $\widetilde{\sigma}(\sigma^{-1}(j))=j'$.
\end{itemize}

Since $\sigma(j)\ne j$ and $\sigma(j')= j'$, we see that $\sigma^{-1}(j) \notin \{j,j'\}$, $\sigma(j)\ne j'$, and $j' \ne \sigma^{-1}(j)$. Thus $\widetilde{\sigma}$ is a permutation in $D_{J'}$.
Also it is easy to see that the map $\sigma\mapsto \widetilde{\sigma}$ is a bijection from $D_J$ to $D_{J'}$.
As $\sigma$ and $\widetilde{\sigma}$ differ only at three places, by the hypothesis we have
$|f(\sigma)-f(\widetilde{\sigma})|\le \Delta(\sigma,\widetilde{\sigma})=3$.
Therefore,
\begin{align*}
    |g(J)- g(J')| = \big|\E_{\bm{\sigma}\sim D_J}[f(\bm{\sigma})- f(\widetilde{\bm{\sigma}})]\big|\le 3,
\end{align*}
as desired.
\end{poc}
 Let $\mu$ be the mean of $f$. 
Then note that 
\[ \mu = \E g(\bm{I}).\]
By the triangle inequality,
\[
\PP(|f(\bm{\sigma})-\mu|\ge t) \le \PP(|g(\bm{I})-\mu|\ge t/2)+ \PP(|f(\bm{\sigma})-g(\bm{I})|\ge t/2).
\]

For the first term, recalling \cref{cl: bdd property g} and applying 
\cref{thm:uniform concentration-slice} to $\frac{1}{3}g$,
we get
\[
    \mathbb{P}\{ |g(\bm{I}) -\mu|  \geq t/2 \} \leq  2\exp(-t^2/36k). 
\]

For the second term, note that $g(\bm{I})=\E_{\bm{\sigma}\sim D_{\bm{I}}}f(\bm{\sigma})$ for each instance of $\bm{I}$. Once $\bm{I}$ is fixed, for $\bm{\sigma}\sim D_{\bm{I}}$, we can view $f(\bm{\sigma})$ as a function from $D_{\bm{I}}$ to $\mathbb{R}$. Then, by~\cref{eq:Lipschitz-permutation}, we can apply \cref{thm:derangement-concent} to $f$ and get
\[
\PP(|f(\bm{\sigma})-g(\bm{I})|\ge t/2) \le 2\exp(-t^2/16k).
\]
Therefore,
\[\PP(|f(\bm{\sigma})-\mu|\ge t) \le 4 \exp(-t^2/36k).\]
As the left side is at most one, we get $\PP(|f(\bm{\sigma})-\mu|\ge t) \le 2 \exp(-t^2/72k)$.
\end{proof}


\section{Small intersection}
\label{sec:small volume}
In this section, we will verify the conditions of \cref{thm:suff-cond-small-int} for Hamming/Johnson/permutation spaces, using the concentration inequalities proved in previous section, to show that the intersection of balls in these spaces has small volume. 

As these metric spaces $(X,\mathsf{d})$ have the property that the balls of the same radius have the same volume independent of the center point, we will use $\vol(r)$ throughout this section to denote the volume of a radius-$r$ ball in $X$.

We start with the Hamming space. We will need the following standard estimate on the volume of a Hamming ball. 

\begin{lemma}\label{lem:Hammingball-volume}
Suppose that $0<p<1-1/q$ and that $1\le \alpha n \le pn$. Then
\begin{equation*}
    \vol_q(n,\alpha n)=\Theta_{p,q}(1)\cdot \frac{q^{h_q(\alpha)n}}{\sqrt{\alpha n}}.  \end{equation*}
\end{lemma}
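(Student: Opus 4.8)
The plan is to prove the estimate on $\vol_q(n,\alpha n)=\sum_{i=0}^{\alpha n}\binom{n}{i}(q-1)^i$ by a standard tilting/largest-term argument, exploiting the fact that $\alpha n \le pn$ with $p < 1-1/q$ keeps us strictly to the left of the mode of the summand, so the sum is dominated (up to a constant factor) by its last term.

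First I would recall that the summand $a_i := \binom{n}{i}(q-1)^i$ is log-concave in $i$, and its maximum over $i\in\{0,\dots,n\}$ is attained near $i^\ast = \frac{q-1}{q}n$. The ratio of consecutive terms is $\frac{a_{i+1}}{a_i} = \frac{n-i}{i+1}(q-1)$, which is $\ge 1$ precisely when $i \le i^\ast - O(1)$. Since $\alpha \le p < 1-1/q$, there is a constant $\rho = \rho_{p,q} > 1$ (one may take $\rho$ slightly below $\frac{(1-p)(q-1)}{p}$, using $p<(q-1)/q$) such that $\frac{a_{i}}{a_{i-1}} \ge \rho$ for all $i \le \alpha n$ once $n$ is large. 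Summing the resulting geometric bound $a_{i} \le a_{\alpha n}\,\rho^{-(\alpha n - i)}$ gives
\[
a_{\alpha n} \le \vol_q(n,\alpha n) \le a_{\alpha n}\sum_{j\ge 0}\rho^{-j} = \frac{\rho}{\rho-1}\,a_{\alpha n} = \Theta_{p,q}(1)\cdot a_{\alpha n},
\]
so it remains to show $a_{\alpha n} = \binom{n}{\alpha n}(q-1)^{\alpha n} = \Theta_{p,q}(1)\cdot q^{h_q(\alpha)n}/\sqrt{\alpha n}$. (One subtlety: $\alpha n$ need not be an integer; I would replace it throughout by $\lfloor \alpha n\rfloor$, which changes $a_{\alpha n}$ and $q^{h_q(\alpha)n}$ only by constant factors since $h_q$ is Lipschitz on the relevant compact interval, and likewise $\sqrt{\alpha n} = \Theta(\sqrt{\lfloor\alpha n\rfloor})$.)

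For the single-term estimate I would apply the standard Stirling bound $\binom{n}{m} = \Theta(1)\cdot \frac{1}{\sqrt{2\pi m(1-m/n)/n\cdot n}}\, 2^{n H(m/n)}$, valid uniformly for $m/n$ in a compact subinterval of $(0,1)$ — here $m/n = \alpha \in [1/n, p]$, and since $\alpha$ is bounded away from $1$ the factor $\sqrt{n/(n-m)} = \Theta_{p}(1)$, leaving $\binom{n}{\alpha n} = \Theta_p(1)\cdot \frac{1}{\sqrt{\alpha n}}\, 2^{n H(\alpha)}$ where $H$ is the binary entropy in bits. Multiplying by $(q-1)^{\alpha n}$ and converting to base $q$, one checks the exponent: $H(\alpha)\log_q 2 + \alpha\log_q(q-1) = -\alpha\log_q\alpha - (1-\alpha)\log_q(1-\alpha) + \alpha\log_q(q-1) = h_q(\alpha)$, exactly the definition of $h_q$ given in the paper. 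This yields $a_{\alpha n} = \Theta_{p,q}(1)\cdot q^{h_q(\alpha)n}/\sqrt{\alpha n}$, and combining with the geometric-tail bound above completes the proof.

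The only genuine obstacle is a boundary issue: the Stirling estimate with the $1/\sqrt{m}$ prefactor degrades when $m$ is bounded (not growing with $n$). If $\alpha n$ is a constant (the regime $\alpha n \to$ const, e.g. $\alpha = 1/n$), then $\binom{n}{\alpha n} = \Theta(n^{\alpha n})$ and $q^{h_q(\alpha)n} = q^{h_q(\alpha)n}$; one verifies $h_q(\alpha)n = -\alpha n\log_q\alpha + O(\alpha n) = \alpha n\log_q n + O(\alpha n)$ since $\alpha = (\alpha n)/n$, so $q^{h_q(\alpha)n} = \Theta(1)\cdot n^{\alpha n}$, and $\sqrt{\alpha n} = \Theta(1)$, so the claimed identity still holds with constants depending only on the bounded value $\alpha n$ — but to get constants depending only on $p,q$ one should simply note the hypothesis is $1\le \alpha n\le pn$, so $\alpha n \ge 1$ always, and handle $\alpha n = O(1)$ versus $\alpha n \to \infty$ by the two-sided Stirling bound $\sqrt{2\pi m}(m/e)^m \le m! \le e\sqrt{m}(m/e)^m$ which is valid for all $m\ge 1$ with absolute constants. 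I would phrase the single-term estimate using this uniform two-sided bound rather than an asymptotic expansion, which removes the boundary subtlety entirely and gives the claim cleanly for the whole range $1\le \alpha n\le pn$.
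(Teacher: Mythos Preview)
The paper does not actually prove this lemma; it is introduced with the words ``We will need the following standard estimate on the volume of a Hamming ball'' and stated without proof. Your argument is precisely the standard one: (i) the ratio $a_i/a_{i-1}=\frac{n-i+1}{i}(q-1)$ is at least $\rho_{p,q}:=\frac{(1-p)(q-1)}{p}>1$ for all $i\le \alpha n\le pn$, so the sum is within a factor $\frac{\rho}{\rho-1}=\Theta_{p,q}(1)$ of its last term; (ii) the two-sided Stirling bound, valid for all $m\ge 1$, gives $\binom{n}{\alpha n}=\Theta(1)\cdot \frac{\sqrt{n}}{\sqrt{\alpha n(n-\alpha n)}}\,2^{nH(\alpha)}=\Theta_p(1)\cdot\frac{2^{nH(\alpha)}}{\sqrt{\alpha n}}$ (using $1-\alpha\ge 1-p$), and the entropy identity $H(\alpha)\log_q 2+\alpha\log_q(q-1)=h_q(\alpha)$ matches the paper's definition exactly. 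Your handling of the boundary $\alpha n=O(1)$ via the non-asymptotic Stirling bounds is the right fix and makes the constants genuinely depend only on $p,q$. The proof is correct and complete.
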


The Hamming space satisfies the conditions of \cref{thm:suff-cond-small-int} as follows.

\begin{lemma}
\label{lem:volume cube}
Let $0<p< \frac{q-1}{q}$, and let $k$ be any positive integer.
Consider $X=\{0,1,\ldots,q-1\}^n$ endowed with the Hamming distance $\Delta$.
Then $(X,\Delta)$ satisfies the conditions {\rm (A1)}--{\rm (A3)} of \cref{thm:suff-cond-small-int} as follows.
 \begin{itemize}
      \item[\rm (A1)] $(X,\Delta)$ has exponential growth at radius $pn$ with 
      rate $c=\Omega_{p,q}(1)$.
      
      \item[\rm (A2)] $(X,\Delta)$ is $(pn,k)$-dispersed with constant $\alpha= \frac{1}{2}(1- \frac{pq}{q-1})>0$. 
   
      \item[\rm (A3)] For any $a,b\in X$ with $\Delta(a,b)=k$ and any $0\le i\le \alpha k$, $\ell_{a,b}(\x)-\E \ell_{a,b}(\x)$ is $400k$-subgaussian, where $\ell_{a,b}$ is as in~\eqref{defn:f-slice} and $\x$ is drawn uniformly from 
      $S(a,pn-i)$.
  \end{itemize}
Consequently, for every $a,b \in X$,
\begin{equation}\label{eq:constant}
\frac{\vol(B(a,r)\cap B(b,r))}{\vol(B(a,r))}=2e^{-\Omega_{p,q}(1)\cdot \Delta(a,b)}.
\end{equation}
\end{lemma}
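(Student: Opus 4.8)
The plan is to verify conditions (A1)--(A3) of \cref{thm:suff-cond-small-int} with $r=pn$ (which we may assume is an integer, else replace by its floor), and then read off \eqref{eq:constant} from the conclusion of that theorem. Throughout I would use that the $q$-ary Hamming space is vertex-transitive under the translation action of $(\mathbb{Z}/q\mathbb{Z})^n$: balls of a fixed radius have a common volume, so we may take $a=0^n$ in all three conditions, and then $S(0^n,m)$ is exactly the set of weight-$m$ words, so $\x\sim S(0^n,m)$ is a uniformly random weight-$m$ word.

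For (A1) I would work with the terms $a_i=\binom{n}{i}(q-1)^i$ of $\vol_q(n,m)=\sum_{i\le m}a_i$, for which $\tfrac{a_i}{a_{i-1}}=\tfrac{(n-i+1)(q-1)}{i}\ge\rho$ for every $1\le i\le pn$, where $\rho:=\tfrac{(1-p)(q-1)}{p}$; here $\rho>1$ is exactly the hypothesis $p<\tfrac{q-1}{q}$. Summing the geometric series below the index $pn-t$ gives $\vol_q(n,pn-t)\le\tfrac{\rho}{\rho-1}\,a_{pn-t}$, while summing the block of $t$ terms just below $pn$ gives $\vol_q(n,pn)-\vol_q(n,pn-t)\ge\sum_{j=1}^{t}\rho^j a_{pn-t}=\tfrac{\rho(\rho^t-1)}{\rho-1}a_{pn-t}$; dividing these two estimates yields $\tfrac{\vol_q(n,pn-t)}{\vol_q(n,pn)}\le\rho^{-t}=e^{-(\ln\rho)t}$, so (A1) holds with rate $c=\ln\rho=\Omega_{p,q}(1)$ (in fact with constant $1$ in place of $2$). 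I expect this to be the one point requiring genuine care: the lazy bound $\vol_q(n,pn)\ge a_{pn}$ leaves a leading constant $\tfrac{\rho}{\rho-1}$ that blows up as $p\to\tfrac{q-1}{q}$, so one really must lower-bound $\vol_q(n,pn)$ by the block $a_{pn-t+1},\dots,a_{pn}$ to absorb it; the rest is routine.

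For (A2), fix $b$ of weight $k$ and $0\le i\le\alpha k$, and put $m=pn-i$ (if $m<0$ the shell is empty, so there is nothing to check). For a uniform weight-$m$ word $\x$, coordinate symmetry gives $\PP(x_j\ne 0)=m/n$ and $\PP(x_j=v)=m/((q-1)n)$ for each nonzero $v$; writing $\Delta(\x,b)=\sum_{j:\,b_j=0}\mathbbm 1_{x_j\ne 0}+\sum_{j:\,b_j\ne 0}\mathbbm 1_{x_j\ne b_j}$ and $\Delta(\x,0^n)=m$, a one-line computation gives $\E\,\ell_{a,b}(\x)=k\bigl(1-\tfrac{m}{n}\cdot\tfrac{q}{q-1}\bigr)$, which is at least $k\bigl(1-\tfrac{pq}{q-1}\bigr)=2\alpha k$ because $m\le pn$; this is precisely where $p<\tfrac{q-1}{q}$ makes $\alpha$ positive. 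For (A3), I would observe that $\ell_{0,b}(x)=\sum_{j=1}^{n}\bigl(\mathbbm 1_{x_j\ne b_j}-\mathbbm 1_{x_j\ne 0}\bigr)$ is a sum of single-coordinate functions whose $j$-th term vanishes identically when $b_j=0$ and takes values in $\{-1,0,1\}$ when $b_j\ne 0$; hence $\ell_{0,b}$ satisfies the bounded differences condition with $c_j=2$ on the $k$ coordinates where $b_j\ne 0$ and $c_j=0$ otherwise, so $\sum_j c_j^2=4k$. Applying \cref{lem:slice-concentration-q-ary} (with the weight equal to $m$, which does not enter the bound) then gives $\PP\bigl(|\ell_{a,b}(\x)-\E\,\ell_{a,b}(\x)|\ge t\bigr)\le 2\exp\bigl(-t^2/(272k)\bigr)$, so $\ell_{a,b}(\x)-\E\,\ell_{a,b}(\x)$ is $272k$-subgaussian, hence $400k$-subgaussian, verifying (A3) with $K=400k$.

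Finally, I would combine: \cref{thm:suff-cond-small-int} applied with $r=pn$ and, for given $a,b$, with $k=\Delta(a,b)$, $c=\ln\rho$, $\alpha=\tfrac12\bigl(1-\tfrac{pq}{q-1}\bigr)$ and $K=400k$, yields $\tfrac{\vol(B(a,pn)\cap B(b,pn))}{\vol(B(a,pn))}=2e^{-\Omega_{c,\alpha}(1)(k+k^2/K)}$. Since $c$ and $\alpha$ depend only on $p,q$, and $k+k^2/K=k+\tfrac{k}{400}=\Theta(k)$, the exponent is $-\Omega_{p,q}(1)\cdot\Delta(a,b)$, which is exactly \eqref{eq:constant}.
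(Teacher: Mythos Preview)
Your proposal is correct and, for (A2) and (A3), essentially identical to the paper's proof: the paper also reduces to $a=0^n$, $b=1^k0^{n-k}$, computes $\E\,\ell_{a,b}(\x)=k(\gamma-\delta)$ with $\gamma=\PP(x_1\ne 1)$ and $\delta=\PP(x_1\ne 0)$, and invokes \cref{lem:slice-concentration-q-ary} with parameters $(2,\ldots,2,0,\ldots,0)$.

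The one genuine difference is in (A1). The paper proves exponential growth by appealing to the entropy-based volume estimate of \cref{lem:Hammingball-volume}, writing $\vol(pn)/\vol(pn-t)=\Theta_{p,q}(1)\cdot\sqrt{(pn-t)/pn}\cdot q^{(h_q(p)-h_q(p-t/n))n}$ and then using the mean value theorem to lower-bound the exponent by $h_q'(x)t$ for some $x\le p$; this requires a separate case analysis for small versus large $t$ to handle the $\sqrt{\cdot}$ factor. Your argument instead bounds the term ratio $a_i/a_{i-1}\ge\rho:=(1-p)(q-1)/p$ directly and sums two geometric series, obtaining the clean inequality $\vol_q(n,pn-t)/\vol_q(n,pn)\le\rho^{-t}$ with no auxiliary lemma, no case split, and no leading constant. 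Your route is more elementary and self-contained; the paper's route has the advantage of making the connection to the entropy function $h_q$ explicit, which is thematically relevant elsewhere in the paper (e.g.\ the list-decoding application), but for the purpose of this lemma your argument is cleaner.
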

\begin{proof}
(A1) Consider $t<pn$.
By the mean value theorem, $h_q(p)-h_q(p-t/n)=h_q'(x)t/n$ for some $x\in (p-t/n,p)$. Together with Lemma~\ref{lem:Hammingball-volume}, this yields
  \[
  \frac{\vol(pn)}{\vol(pn-t)}=\Omega_{p,q}(1)\cdot \frac{\sqrt{pn-t}}{\sqrt{pn}}\cdot q^{(h_q(p)-h_q(p-t/n))n}=\Omega_{p,q}(1)\cdot\frac{\sqrt{pn-t}}{\sqrt{pn}}\cdot q^{h_q'(x)t}.
  \]
As $x\le p<1-1/q$, we have $h'_q(x)=\log_q(q-1)-\log_q\frac{x}{1-x}\ge \log_q(q-1)-\log_q\frac{p}{1-p}>0$. Letting $\eps=\log_q(q-1)-\log_q\frac{p}{1-p}$, we thus get
  \[
  \frac{\vol(pn)}{\vol(pn-t)} \ge \Omega_{p,q}(1)\cdot\frac{\sqrt{pn-t}}{\sqrt{pn}}\cdot q^{\eps t}.
  \]
  If $t\le pn/2$, then $\frac{\sqrt{pn-t}}{\sqrt{pn}} \ge 1/2$; while $\frac{\sqrt{pn-t}}{\sqrt{pn}}\cdot q^{\eps t}\ge q^{\eps t/2}$ if $pn/2 \le t \leq pn-1$ and $pn$ is sufficiently large. Hence $\frac{\vol(pn-t)}{\vol(pn)} \leq O_{p,q}(1) \cdot q^{-\eps t/2}$ in either case. As the left side is at most one, we conclude that there exists $c=\Omega_{p,q}(1)$ such that $\frac{\vol(pn-t)}{\vol(pn)} \le 2e^{-ct}$ for all $t< pn$.
  
(A2) Consider any two points $a, b\in X$ with $\Delta(a,b)=k$. Let $0\le i \le \alpha k$, and let $\x \sim S(a,pn-i)$. We can assume $a=0^n$ and $b=1^k0^{n-k}$.
Write $\gamma=\PP(x_1\neq 1)=\cdots=\PP(x_1\neq q-1)$ and $\delta=\PP(x_1\neq 0)$. Then $\delta=\frac{pn-i}{n}\le p$. Moreover, note that $(q-1)\gamma+\delta=q-1$, and so $\gamma=1-\frac{\delta}{q-1}\ge 1-\frac{p}{q-1}$. By the linearity of expectation we have
\begin{align*}
   \E \ell_{a,b}({\x})&= \sum_{i=1}^{k}\left(\PP(x_i\neq 1)-\PP(x_i\neq 0)\right)\\
   &=k(\gamma-\delta)\\
   &\ge k\big(1-\frac{p}{q-1}-p \big)= 2\alpha k,
\end{align*}
where the second equality follows from the symmetry.   
   
   (A3) Assume $a=0^{n}$ and $b = 1^k 0^{n-k}$. 
   It is easy to see that the function $\ell_{a,b}$ satisfies the bounded difference condition with parameters $(2,\dots, 2,0,\dots,0)$ where only the first $k$ coordinates are non-zero. Let $0\leq i\leq \alpha k$, and let $\x \sim S(a,pn-i)$.
   By Lemma~\ref{lem:slice-concentration-q-ary}, $\ell_{a,b}(\x)-\mathbb{E}\ell_{a,b}(\x)$ is $400k$-subgaussian.
\end{proof}


Our next result justifies the conditions of \cref{thm:suff-cond-small-int} for the Johnson space.

\begin{lemma}
\label{lem:volume constant-weight}
Let $\beta,\lambda$ and $\eps$ be real numbers with $0<\eps<1/10$ and $0<\beta < (1-\eps)\lambda(1-\lambda)$. Let $k$ be any positive integer.
Consider the slice $X=\binom{[n]}{\lambda n}$ endowed with the Johnson distance $\mathsf{d}$.
Then $(X,\mathsf{d})$ satisfies the conditions {\rm (A1)}--{\rm (A3)} of \cref{thm:suff-cond-small-int} as follows.
 \begin{itemize}
      \item[\rm (A1)] $(X,\mathsf{d})$ has exponential growth at radius $\beta n$ with 
      rate $\eps^2$; 
      
      \item[\rm (A2)] $(X,\mathsf{d})$ is $(\beta n,k)$-dispersed with constant $\eps/2$;
   
      \item[\rm (A3)] For any $a,b\in X$ with $\Delta(a,b)=k$ and any $0\le i\le \eps k$, $\ell_{a,b}(\x)-\E \ell_{a,b}(\x)$ is $8\beta n$-subgaussian, where $\x \sim S(a,\beta n-i)$.
  \end{itemize}
Consequently, for every $a,b \in X$,
\begin{equation}
\frac{\vol(B(a,\beta n)\cap B(b,\beta n))}{\vol(B(a,\beta n))}=2e^{-\Omega_{\eps}(1)\cdot \big(\mathsf{d}(a,b)+\mathsf{d}(a,b)^2/(\beta n)\big)}.
\end{equation}
\end{lemma}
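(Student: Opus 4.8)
The plan is to verify the three hypotheses (A1)--(A3) of \cref{thm:suff-cond-small-int} for the radius $r=\beta n$, with rate $c=\eps^2$, dispersion constant $\alpha=\eps/2$, and subgaussian parameter $K=8\beta n$, and then read off the last display by applying that theorem with $k=\mathsf{d}(a,b)$ (the cases $\mathsf{d}(a,b)=0$, and $\mathsf{d}(a,b)>2\beta n$ where the intersection is empty, being trivial). Throughout I identify $X=\binom{[n]}{\lambda n}$ with the $\lambda n$-subsets of $[n]$ and write $w=\lambda n$. Both (A2) and (A3) will rest on the elementary identity
\[
\ell_{a,b}(x)=\mathsf{d}(x,b)-\mathsf{d}(x,a)=|x\cap a|-|x\cap b|=|x\cap(a\setminus b)|-|x\cap(b\setminus a)|,
\]
valid since $\mathsf{d}$ is half the Hamming distance and $|x|=|a|=|b|=w$; when $\mathsf{d}(a,b)=k$ the sets $a\setminus b$ and $b\setminus a$ each have size $k$.

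For (A1) I would argue directly from $\vol(r)=\sum_{i=0}^{r}a_i$ with $a_i=\binom{w}{i}\binom{n-w}{i}$, avoiding any Stirling/entropy estimate. The ratio of consecutive terms is $a_{i+1}/a_i=\frac{(w-i)(n-w-i)}{(i+1)^2}$, and on $0\le i\le\beta n$ one has $(w-i)(n-w-i)-(i+1)^2=w(n-w)-(i+1)(n+1)\ge(\lambda(1-\lambda)-\beta)n^2-o(n^2)$, so the hypothesis $\beta\le(1-\eps)\lambda(1-\lambda)$ gives $a_{i+1}/a_i\ge 1+\eps\lambda(1-\lambda)/\beta^2-o(1)\ge 1+3\eps$ for $n$ large (using $\beta^2<(\lambda(1-\lambda))^2\le\lambda(1-\lambda)/4$). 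Hence $(a_i)$ is increasing on $[0,\beta n]$ with all ratios at least $1+3\eps$, which yields $\vol(\beta n)\ge a_{\beta n}$, $\vol(\beta n-t)\le\frac{1+3\eps}{3\eps}\,a_{\beta n-t}$ and $a_{\beta n-t}\le(1+3\eps)^{-t}a_{\beta n}$; thus $\frac{\vol(\beta n-t)}{\vol(\beta n)}\le\frac{1+3\eps}{3\eps}(1+3\eps)^{-t}$, and a short one-variable check (using $\eps<1/10$, plus the trivial bound $\le 1$ when $\eps^2 t<\ln 2$) turns this into $\le 2e^{-\eps^2 t}$ for all $t<\beta n$.

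For (A2) and (A3), fix $a,b$ with $\mathsf{d}(a,b)=k$ and $0\le i\le\eps k$, and let $\x\sim S(a,\beta n-i)$; then $\x\cap a$ is a uniform $m$-subset of $a$ with $m=\lambda n-\beta n+i$, $\x\setminus a$ is a uniform $(\beta n-i)$-subset of $[n]\setminus a$, and the two are independent. Then (A2) is a one-line symmetry computation: $\E\ell_{a,b}(\x)=k\bigl(\tfrac{m}{\lambda n}-\tfrac{\beta n-i}{(1-\lambda)n}\bigr)=k\bigl(1-\tfrac{\beta-i/n}{\lambda(1-\lambda)}\bigr)\ge\eps k=2\alpha k$, again by $\beta\le(1-\eps)\lambda(1-\lambda)$ and $i\ge 0$. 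For (A3), the displayed identity writes $\ell_{a,b}(\x)=h_1(\x\cap a)-h_2(\x\setminus a)$, where $h_1(J)=|J\cap(a\setminus b)|$ and $h_2(J')=|J'\cap(b\setminus a)|$ are each $1$-Lipschitz with respect to a single element swap on their respective Johnson slices. Bobkov's slice inequality (\cref{thm:uniform concentration-slice}) then makes $h_1(\x\cap a)-\E h_1$ and $h_2(\x\setminus a)-\E h_2$ each subgaussian with parameter $\le\beta n$ (the relevant quantity $\min\{m,\lambda n-m\}$ is $\beta n-i\le\beta n$, and similarly on the other factor); since the two factors are independent, passing to moment generating functions and back via \cref{lem: subgauss-equiv} shows $\ell_{a,b}(\x)-\E\ell_{a,b}(\x)$ is $8\beta n$-subgaussian.

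With (A1)--(A3) verified, \cref{thm:suff-cond-small-int} with $c=\eps^2$, $\alpha=\eps/2$, $K=8\beta n$, and $k=\mathsf{d}(a,b)$ gives at once $\frac{\vol(B(a,\beta n)\cap B(b,\beta n))}{\vol(\beta n)}=2e^{-\Omega_\eps(1)\cdot(\mathsf{d}(a,b)+\mathsf{d}(a,b)^2/(\beta n))}$. I expect (A1) to be the only step needing genuine care: the goal there is the clean exponential decay $2e^{-\eps^2 t}$ for \emph{every} $t<\beta n$, not merely for $t=\Omega(n)$, and this is precisely where the strict slack $\beta<(1-\eps)\lambda(1-\lambda)$ rather than the bare threshold $\beta<\lambda(1-\lambda)$ is used, to keep the term ratio $a_{i+1}/a_i$ bounded away from $1$ uniformly over $i\le\beta n$. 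The rest is routine symmetry together with an off-the-shelf slice concentration inequality.
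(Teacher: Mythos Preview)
Your argument is essentially the same as the paper's: the verification of (A1) via consecutive-term ratios and a geometric tail, of (A2) via symmetry and linearity of expectation, and of (A3) via the decomposition into two independent slice variables and Bobkov's inequality all match the paper line for line (the paper works in $0$--$1$ vector notation rather than set notation, but the computations are identical).

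One small quantitative point on (A3): the paper obtains the $8\beta n$ constant by the direct tail union bound
\[
\PP(|\ell_{a,b}-\E\ell_{a,b}|\ge t)\le \PP(|h_1-\E h_1|\ge t/2)+\PP(|h_2-\E h_2|\ge t/2)\le 4e^{-t^2/(4\beta n)}\le 2e^{-t^2/(8\beta n)},
\]
the last step using that the left side is at most $1$. Your route ``pass to MGFs and back via \cref{lem: subgauss-equiv}'' loses a factor: with the constants stated there you get $K_2=2\beta n$ for each summand, $K_2=4\beta n$ for the sum, and then $K_1=16\beta n$ on return. This does not affect the asymptotic intersection-volume conclusion, but if you want the stated $8\beta n$ you should use the direct union bound instead.
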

\begin{proof}

(A1)
We wish to show that $\vol(\beta n-t)/\vol(\beta n)\leq 2e^{-\eps^2 t}$ for all $t \le \beta n$. Since the left side is at most one, we can assume $t \ge 1/(2\eps^2)$.
Recall that $\vol(d) = \sum_{i=0}^{d} \binom{\lambda n}{i} \binom{(1-\lambda)n}{i}$ for all non-negative integer $d$.
For $1 \le i\le \beta n$, we have
\begin{align}\label{eq:constant-weight ratio}
\notag
\frac{\binom{\lambda n}{i}\binom{(1-\lambda)n}{i}}{\binom{\lambda n}{i-1}\binom{(1-\lambda)n}{i-1}}&=\frac{(\lambda n-i+1)((1-\lambda)n-i+1)}{i^2}\\ \notag
&\ge \frac{(\lambda-\beta)((1-\lambda)-\beta)}{\beta^2}\\
&=1+\frac{\lambda(1-\lambda)-\beta}{\beta^2}\ge 1+4\eps.
\end{align}
It follows that
\[\vol(\beta n-t) \leq \binom{\lambda n}{\beta n-t} \binom{(1-\lambda)n}{\beta n-t}\cdot\sum_{i=0}^{\beta n-t} (1+4\eps)^{-i} \leq \binom{\lambda n}{\beta n-t} \binom{(1-\lambda)n}{\beta n-t}\cdot\frac{1+4\eps}{4\eps}.\]
Furthermore, \eqref{eq:constant-weight ratio} implies $\vol(\beta n)\ge \binom{\lambda n}{\beta n}\binom{(1-\lambda)n}{\beta n} \ge \binom{\lambda n}{\beta n-t}\binom{(1-\lambda)n}{\beta n-t}\cdot (1+4\eps)^t$. Therefore, we have $\frac{\vol(\beta n-t)}{\vol(\beta n)} \le \frac{1+4\eps}{4\eps}\cdot (1+4\eps)^{-t} \le 2 e^{-\eps^2 t}$ assuming $0<\eps \le 1/10$ and $t\ge 1/(2\eps^2)$.

(A2) Consider any two points $a,b \in X$ with $\mathsf{d}(a,b)=k$. Let $0\le i \le \eps k/2$, and let $\x \sim S(a,\beta n-i)$. We can assume
$a=1^{\lambda n}0^{(1-\lambda)n}$ and $b=0^{k}1^{\lambda n}0^{(1-\lambda)n-k}$.
Since $\x\in \binom{[n]}{\lambda n}$ and $\mathsf{d}(\x,a)=\beta n-i$, we find $\sum_{j=1}^{\lambda n}x_j=(\lambda-\beta)n+i$ and $\sum_{j=\lambda n +1}^{n}x_j=\beta n-i$. We thus get $\E x_1=\cdots=\E x_{\lambda n}=\frac{(\lambda-\beta)n+i}{\lambda n}$ and $\E x_{\lambda n+1}=\cdots=\E x_n=\frac{\beta n-i}{(1-\lambda)n}$, by the symmetry.
Furthermore, notice that 
\[
\ell_{a,b}(\x)=\mathsf{d}(\x,b)-\mathsf{d}(\x,a)=\frac12 \sum_{j=1}^k (2x_j-1)+\frac12 \sum_{j=\lambda n+1}^{\lambda n+k}(1-2x_j)=\sum_{j=1}^{k}x_j-\sum_{j=\lambda n+1}^{\lambda n+k}x_j.
\]
Therefore, by linearity of expectation, we obtain
\[
\E \ell_{a,b}(\x)=k\cdot\left(\frac{(\lambda-\beta)n+i}{\lambda n}-\frac{\beta n-i}{(1-\lambda)n}\right) \ge k\cdot \frac{\lambda(1-\lambda)-\beta}{\lambda(1-\lambda)}\geq \eps k,
\]
as desired.

(A3)
Without loss of generality we can assume $a=1^{\lambda n}0^{(1-\lambda)n}$ and $b=0^{k}1^{\lambda n}0^{(1-\lambda)n-k}$.
We wish to show
$ \PP(|\ell_{a,b}(\x)-\E \ell_{a,b}(\x)|\ge t) \leq 2  e^{-t^2/(8\beta n)}$ for all $t\ge 0$.
As the left side is at most one, we may assume $2  e^{-t^2/(8\beta n)} \leq 1$.
Observe that $\x \sim S(a,\beta n-i)$ is a concatenation of two independent random vectors 
$(x_1,\ldots,x_{\lambda n})\sim \binom{[\lambda n]}{(\lambda-\beta)n+i}$ and $(x_{\lambda n+1},\ldots,x_n)\sim \binom{ [n]\setminus[\lambda n]}{\beta n-i}$. Moreover, we can decompose $\ell_{a,b}(\x)=f(x_1,\ldots,x_{\lambda n})+g(x_{\lambda n+1},\ldots,x_n)$, where $f(x_1,\ldots,x_{\lambda n})=\sum_{j=1}^{k}x_j$ and $g(x_{\lambda n+1},\ldots,x_n)=-\sum_{j=\lambda n+1}^{\lambda n+k}x_j$. Applying
\cref{thm:uniform concentration-slice} to $f$ and $g$, we therefore get
\begin{align*}
      \PP(|\ell_{a,b}(\x)-\E \ell_{a,b}(\x)|\ge t)   &\le \PP( |f-\E f| \ge t/2)+ \PP(|g-\E g| \ge t/2)\\
       & \le 4\exp\Big(-\frac{t^2}{4(\beta n-i)}\Big)  \le 4  e^{-t^2/(4\beta n)} \leq 2  e^{-t^2/(8\beta n)},
   \end{align*}
   where the last inequality holds as $2  e^{-t^2/(8\beta n)}\leq 1$. This completes our proof.
\end{proof}


The last result of this section confirms the conditions of \cref{thm:suff-cond-small-int} for the permutation space.
\begin{lemma}
\label{lem:volume permutation}
Let $0<\eps <0.01$, $1 \le r \leq (1-\eps )n$ and $k \ge 6/\eps$.
Consider the symmetric group $S_n$ endowed with the Hamming distance $\Delta$.
Then $(S_n,\Delta)$ satisfies the conditions {\rm (A1)}--{\rm (A3)} of \cref{thm:suff-cond-small-int} as follows.
 \begin{itemize}
      \item[\rm (A1)] $(S_n,\Delta)$ has exponential growth at radius $r$ with rate $\eps$; 
      
      \item[\rm (A2)] $(S_n,\Delta)$ is $(r,k)$-dispersed with constant $\eps/4$;
   
      \item[\rm (A3)] For any $a,b\in M$ with $\Delta(a,b)=k$ and any $0\le i\le \eps k/4$, $\ell_{a,b}(\x)-\E \ell_{a,b}(\x)$ is $72r$-subgaussian, where $\x \sim S(a,r-i)$.
  \end{itemize}
Consequently, for every $a,b \in S_n$ with $ \Delta(a,b)\ge 6/\eps$,
\begin{equation}
\frac{\vol(B(a,r)\cap B(b,r))}{\vol(B(a,r))} \le 2e^{-\Omega_{\eps}(1)\cdot \left(\Delta(a,b)+\Delta(a,b)^2/r\right)}.
\end{equation}
\end{lemma}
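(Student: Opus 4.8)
The plan is to verify the three conditions (A1)--(A3) of \cref{thm:suff-cond-small-int} for the symmetric group $S_n$ with the Hamming metric, much as was done for the Hamming and Johnson spaces in \cref{lem:volume cube,lem:volume constant-weight}, and then quote \cref{thm:suff-cond-small-int} to obtain the stated exponential decay of the intersection volume. Since $S_n$ is vertex-transitive under left/right translation, all balls of a given radius have the same volume, so the quantity $\vol(r)$ is well-defined and we may freely normalize one of the two centers to be the identity.

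For \textbf{(A1)}, I would compute the number of permutations of weight exactly $j$, namely $\binom{n}{j}D_j$ where $D_j$ is the number of derangements of $j$ points, and estimate the ratio of consecutive shells $\binom{n}{j}D_j/\binom{n}{j-1}D_{j-1} = \frac{(n-j+1)D_j}{j D_{j-1}}$. Using $D_j/D_{j-1} = j - (-1)^j/D_{j-1} = j(1+o(1))$ (more precisely $D_j \approx j!/e$, so $D_j/D_{j-1}\to j$ rapidly), this ratio is $\approx (n-j+1)$, which is at least $\eps n$ for $j \le (1-\eps)n$. Hence each shell with index below $r$ is smaller than the next by a factor bounded away from $1$ (in fact by a factor like $\eps n \ge $ const), and summing a geometric series gives $\vol(r-t)/\vol(r) \le 2 e^{-\eps t}$; a small amount of care is needed for the very first few values of $j$ where the derangement asymptotics are crudest, but these contribute negligibly. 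This yields exponential growth at radius $r$ with rate $\eps$.

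For \textbf{(A2)}, take $a = \mathrm{id}$ and $b$ with $\Delta(a,b) = k$, and let $\x \sim S(a, r-i)$, i.e. a uniformly random permutation of weight exactly $r-i$. I need $\E \ell_{a,b}(\x) = \E[\Delta(\x,b) - \Delta(\x,a)] \ge (\eps/2) k$ for $0 \le i \le (\eps/4)k$. Writing $\Delta(\x,a) = r-i$ (deterministic) and $\Delta(\x,b) = \sum_{j} \mathbbm{1}_{\x(j)\ne b(j)}$, the support of $b$ (the set $J_0$ of non-fixed points of $b$, $|J_0| = k$) is the only place where $\Delta(\x,b)$ and $\Delta(\x,a)$ differ in a coordinate-wise sense; outside $J_0$ we have $b(j)=j$, so $\mathbbm{1}_{\x(j)\ne b(j)} = \mathbbm{1}_{\x(j)\ne j}$. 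Thus $\ell_{a,b}(\x) = \sum_{j\in J_0}(\mathbbm{1}_{\x(j)\ne b(j)} - \mathbbm{1}_{\x(j)\ne j})$, and I estimate each term: by symmetry of the uniform random weight-$(r-i)$ permutation, $\PP(\x(j)=j)$ and $\PP(\x(j)=b(j))$ are each $O(1/n)$ small (of order $(n - (r-i))/n \le \eps'$ and $1/n$ respectively), so $\E \ell_{a,b}(\x) \ge k \cdot (1 - O(1/n) - O(\eps)) \ge (\eps/2)k$ once $\eps$ is small and $n$ large — and the extra requirement $2\alpha k = (\eps/2)k$ is met. The main subtlety here is computing $\PP(\x(j)=j)$ and $\PP(\x(j)=b(j))$ for a uniform permutation conditioned to have a prescribed number of non-fixed points, which can be done by a symmetry/counting argument (conditioning on which coordinates are fixed).

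For \textbf{(A3)}, fix $a=\mathrm{id}$, $b$ of weight $k$, and $\x \sim S(a,r-i)$, a uniform permutation of weight $r-i$. The map $\sigma \mapsto \ell_{a,b}(\sigma) = \Delta(\sigma,b)-\Delta(\sigma,a)$ is $2$-Lipschitz with respect to the Hamming distance on $S_{n,r-i}$ (changing $\sigma$ at two coordinates changes each of $\Delta(\sigma,b),\Delta(\sigma,a)$ by at most $2$, hence $\ell$ by at most... more carefully, a single transposition changes $\Delta(\sigma,a)$ by at most $2$ and $\Delta(\sigma,b)$ by at most $2$, so $|\ell_{a,b}(\sigma)-\ell_{a,b}(\tau)| \le 2\Delta(\sigma,\tau)$). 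Then \cref{lem:slice-concentration-perm}, applied with $\tfrac12 \ell_{a,b}$ playing the role of $f$ on $S_{n,r-i}$ (whose weight is $r-i \le r$), gives $\PP(|\ell_{a,b}(\x) - \E\ell_{a,b}(\x)| \ge t) \le 2\exp(-t^2/(72\cdot 4 (r-i)/4)) $ — I would track the constants so the final bound reads $2\exp(-t^2/(72 r))$, i.e. $\ell_{a,b}(\x)-\E\ell_{a,b}(\x)$ is $72r$-subgaussian. Finally, plugging $c = \eps$, $\alpha = \eps/4$, $K = 72r$ into the conclusion of \cref{thm:suff-cond-small-int} gives $\frac{\vol(B(a,r)\cap B(b,r))}{\vol(B(a,r))} \le 2e^{-\Omega_\eps(1)(k + k^2/r)}$ with $k = \Delta(a,b) \ge 6/\eps$, which is exactly the claimed bound. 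I expect the main obstacle to be \textbf{(A2)}: getting clean enough estimates on the coordinate marginals $\PP(\x(j)=j)$, $\PP(\x(j)=b(j))$ of a uniformly random permutation conditioned on its exact weight, uniformly over the allowed range of $i$, and confirming the inequality $\E\ell_{a,b}(\x)\ge (\eps/2)k$ with the correct constant; (A1) is a routine derangement-ratio computation and (A3) is essentially an invocation of the already-proved \cref{lem:slice-concentration-perm}.
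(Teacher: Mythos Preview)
Your overall plan matches the paper's: verify (A1)--(A3) and then invoke \cref{thm:suff-cond-small-int}. Two points need sharpening.

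For (A3) you are working harder than necessary and end up with the wrong constant. On the slice $S(a,r-i)$ the quantity $\Delta(\cdot,a)=r-i$ is \emph{constant}, so for $\sigma,\tau\in S_{n,r-i}$ one has $\ell_{a,b}(\sigma)-\ell_{a,b}(\tau)=\Delta(\sigma,b)-\Delta(\tau,b)$, whence $|\ell_{a,b}(\sigma)-\ell_{a,b}(\tau)|\le \Delta(\sigma,\tau)$ by the triangle inequality. Thus $\ell_{a,b}$ is $1$-Lipschitz on the slice, and \cref{lem:slice-concentration-perm} applies directly to give $72(r-i)\le 72r$-subgaussianity. Your $2$-Lipschitz route (halving and then applying the lemma) would only yield $288r$-subgaussian, which does not match the stated (A3), though of course the final intersection bound would be unaffected.

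For (A2) your decomposition $\ell_{a,b}(\x)=\sum_{j\in J_0}\bigl(\mathbbm{1}_{\x(j)\ne b(j)}-\mathbbm{1}_{\x(j)\ne j}\bigr)$ is correct, but the estimates that follow are garbled. Each summand has expectation $\PP(\x(j)=j)-\PP(\x(j)=b(j))$; you need the first probability to be \emph{large}, not small. In fact $\PP(\x(j)=j)=(n-r+i)/n\ge \eps$ (this is exactly where the hypothesis $r\le(1-\eps)n$ enters), while for $j\in J_0$ one has $b(j)\ne j$ and $\PP(\x(j)=b(j))\le 1/(n-1)$ (condition on the random support $\bm I$: a uniform derangement of $\bm I$ sends $j$ to any fixed $j'\ne j$ with probability $1/(|\bm I|-1)$). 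Summing over $J_0$ gives $\E\ell_{a,b}(\x)\ge k\bigl(\eps-\tfrac{1}{n-1}\bigr)$, and since $n\ge k\ge 6/\eps$ forces $\tfrac{1}{n-1}<\eps/2$, one obtains $\E\ell_{a,b}(\x)\ge(\eps/2)k=2\alpha k$. Your intermediate claim $\E\ell_{a,b}(\x)\ge k(1-O(1/n)-O(\eps))$ is false when $r$ is near $(1-\eps)n$: in that regime the expectation really is of order $\eps k$, not of order $k$. The paper handles (A2) in an equivalent but slightly different way, conditioning on $\bm I$ and lower-bounding $\E[\Delta(\x,b)\mid\bm I=I]$ using $\PP_{\sigma\sim D_I}[\sigma(i)=b(i)]\le (|I|-1)!/|D_I|\le 3/|I|$, which yields $\E\ell_{a,b}(\x)\ge \eps k-3\ge \eps k/2$ and explains the role of the hypothesis $k\ge 6/\eps$.
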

\begin{proof}
(A1) 
We wish to show $\vol(r-t)/\vol(r)\le 2e^{-\eps t}$ for all $t<r$. As the left side is at most one, we may assume $2e^{-\eps t} \le 1$.
It is well known that $\frac{1}{3}|I|!\leq |D_{|I|}|\leq \frac{1}{2}|I|!$ for $|I|\ge 2$ (where $D_I$ is as defined in~\eqref{eq:derang}). Hence,
   \begin{align*}
      \frac{\vol(r-t)}{\vol(r)} &= \frac{ \sum_{I \in \binom{[n]}{\le r-t}} |D_{I}| }{ \sum_{I \in \binom{[n]}{\le r}} |D_{I}| } 
   \leq \frac{1+ \frac{1}{2} \sum_{i=2}^{r-t} \binom{n}{i}i! }{1+ \frac{1}{3} \sum_{i=2}^{r} \binom{n}{i}i! } \\ 
   &\leq \frac{3}{2}\cdot \frac{ 3n(n-1)\cdots (n -r+t+1) }{n(n-1)\cdots (n-r+1)} \leq  \frac{5}{t!} \leq 2 e^{-\eps t},
   \end{align*}
   where the last inequality holds as $\eps \le 0.01$ and $2e^{-\eps t} \le 1$.

(A2) Consider any $a,b \in S_n$ with $\Delta(a,b)=k$. Let $0\le i \le \eps k/4$, and let $\x \sim S(a,r-i)$. We can assume $a\in S_n$
 is the identity permutation and $b$ is a permutation in $D_{[k]}$.
To compute the mean of $\ell_{a,b}(\x)$, we generate $\x$ by first drawing $\bm{I}\sim \binom{[n]}{r-i}$ and then choosing $\x \sim D_{\bm{I}}$. 

Note that for all  $i \in [k]\setminus \bm{I}$ and $j\in \{k+1,\ldots,n\}\setminus \bm{I}$, we have $\x(i) = i \neq b(i)$ and $\x(j)=j=b(j)$.  
Hence, by the linearity of expectation, we have 
\begin{align*}
\E [\Delta(\x,b)\colon \bm{I}=I] &= \sum_{i\in I} \PP[\x(i)\neq b(i)] + |[k]\setminus I| \\
& \geq |I|\left(1 - \frac{ (|I|-1)!}{|D_I|}\right)  + |[k]\setminus I|\\
&\geq (r-i-3)+ |[k]\setminus I|. 
\end{align*}
Here the penultimate inequality holds as there are at most $(|I|-1)!$ permutations fixing one value, and the final inequality follows from the facts that $|D_I|\geq \frac{1}{3}|I|!$ and that $|I|=r-i$. From this we get
\begin{align*}
\E [\Delta(\x,b)]&=\sum_{I\in \binom{[n]}{r-i}} \E [\Delta(\x,b)\colon \bm{I}=I]\cdot \PP(\bm{I}=I)\\
&\ge (r-i-3)+\E |[k]\setminus \bm{I}|\\
&=(r-i-3)+\frac{k(n-r+i)}{n}.
\end{align*}
As $\ell_{a,b}(\x)=\Delta(\x,b)-\Delta(\x,a)=\Delta(\x,b)-(r-i)$, we obtain
\begin{align*}
\E[\ell_{a,b}(\x)]&= \E [\Delta(\x,b)]-(r-i)\\
&\ge \frac{k(n-r+i)}{n}-3\\
&\ge \eps k-3 \ge \eps k/2,
\end{align*}
assuming $r\le (1-\eps)n$ and $k\ge 6/\eps$.

(A3)
For all $x,x' \in B(a,r-i)$, we have
$|\ell_{a,b}(x)-\ell_{a,b}(x')|=|\Delta(x,b)-\Delta(x',b)| \le \Delta(x,x')$.
Hence Lemma~\ref{lem:slice-concentration-perm} implies $\ell_{a,b}(\x)-\E \ell_{a,b}(\x)$ is $72r$-subgaussian.
\end{proof}


\section{Graph theoretic tools}\label{sec:prelim}

We will reduce the lower bound on various codes to lower bound on independence number of some auxiliary graphs. We then show that all the auxiliary graphs are locally sparse. We can then use known bound on independence number of locally sparse graphs. We will use the following variant which is tailored to our needs.

\begin{theorem}\label{thm:locally-sparse}
    	Let $G$ be an $N$-vertex with maximum degree $D$ and minimum degree at least $D/2$. Let $K\in [1,D]$, and let $\Gamma\subseteq G$ be a subgraph induced by the neighborhood of an arbitrary vertex. Suppose there is a partition $V(\Gamma)=B\cup I$ such that 
    	\begin{itemize}
    	    \item every vertex $u\in B$ has degree $\deg_{\Gamma}(u)\le D/K$; and
    	    \item $|I|\le D/K$.
    	\end{itemize}
    	Then the independence number of $G$ is at least $\big(1-o_{K\rightarrow\infty}(1)\big)\frac{N}{D}\log K$, and the number of independent sets in $G$ is at least
	$\exp\left((\frac{1}{8}+o_{K\rightarrow\infty}(1))\frac{N}{D}\log^2 K\right)$. 
\end{theorem}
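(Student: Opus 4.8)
\emph{Proof sketch.}
The plan is to obtain both statements from a single lower estimate on the \emph{occupancy fraction} of the hard-core model on $G$, in the spirit of the occupancy method for the hard-core model (Davies, Jenssen, Perkins and Roberts), which refines Shearer's averaging bound for triangle-free graphs. For a fugacity $\lambda>0$, let $Z_G(\lambda)=\sum_{I'\in\mathcal I(G)}\lambda^{|I'|}$ be the independence polynomial, let $\mu_\lambda$ be the associated hard-core measure (so a random $\mathbf I\sim\mu_\lambda$ has $\PP(\mathbf I=I')=\lambda^{|I'|}/Z_G(\lambda)$), and write $\alpha_G(\lambda)=\tfrac1N\E_{\mathbf I\sim\mu_\lambda}|\mathbf I|$ for the occupancy fraction. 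Two elementary facts convert any lower bound on $\alpha_G$ into the two conclusions: first, $\alpha(G)\ge\E_{\mathbf I\sim\mu_\lambda}|\mathbf I|=N\alpha_G(\lambda)$ for every $\lambda$; and second, since $Z_G(0)=1$ and $\tfrac{d}{d\lambda}\log Z_G(\lambda)=\E_{\mu_\lambda}|\mathbf I|/\lambda$, the number $i(G)$ of independent sets of $G$ satisfies
\[
\log i(G)=\log Z_G(1)=N\int_0^1\frac{\alpha_G(\lambda)}{\lambda}\,d\lambda.
\]
So it suffices to prove a pointwise bound of the form $\alpha_G(\lambda)\ge\bigl(1-o_{K\rightarrow\infty}(1)\bigr)\psi(\lambda)$, valid for $\lambda$ up to about $\mathrm{poly}(K)/D$, where $\psi(\lambda)=\Theta\!\bigl(\tfrac1D\log(1+\lambda K)\bigr)$: the independence-number conclusion follows by taking $\lambda$ at the top of the admissible range, and the counting conclusion follows by evaluating $\int_0^1\psi(\lambda)\,\lambda^{-1}\,d\lambda$ through the substitution $u=\lambda K$, which turns $\int\log(u)\,u^{-1}\,du$ into a quadratic in $\log K$; the exact constant $\tfrac18$ is what the (un-optimised) bookkeeping of the error terms below produces.

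\emph{The occupancy estimate (the core).}
Fix a vertex $v$, of degree $d_v\in[D/2,D]$, and reveal $\mathbf I\cap\bigl(V(G)\setminus N[v]\bigr)$; let $\mathbf A\subseteq N(v)$ be the set of neighbours of $v$ with no revealed neighbour in $\mathbf I$. Conditionally on the revealed part, the restriction of $\mathbf I$ to $N[v]$ is the hard-core measure on the graph with vertex set $\{v\}\cup\mathbf A$, all edges from $v$ to $\mathbf A$, and the edges of $\Gamma=G[N(v)]$ inside $\mathbf A$; splitting on whether $v$ is selected gives the exact identity
\[
\PP\bigl(v\in\mathbf I\ \big|\ \text{revealed part}\bigr)=\frac{\lambda}{\lambda+Z_{\Gamma[\mathbf A]}(\lambda)}.
\]
Here the hypothesis on the partition $V(\Gamma)=B\cup I$ is used. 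Deleting the $\le D/K$ vertices of $I$ changes $Z_{\Gamma[\mathbf A]}(\lambda)$ by at most a factor $(1+\lambda)^{|I|}\le(1+\lambda)^{D/K}=1+o_{K\rightarrow\infty}(1)$ in the relevant fugacity range; and on the remaining vertices every vertex has at most $D/K$ neighbours inside $\Gamma$, so $\Gamma[B\cap\mathbf A]$ has few edges relative to the range of $\lambda$, whence $Z_{\Gamma[B\cap\mathbf A]}(\lambda)=(1+\lambda)^{(1-o_{K\rightarrow\infty}(1))|\mathbf A|}$, matching the edgeless (triangle-free) case to leading order. As $a\mapsto\lambda/(\lambda+(1+\lambda)^{a})$ is convex, Jensen's inequality reduces matters to an upper bound on $\E|\mathbf A|=\sum_{u\in N(v)}\PP\bigl(\mathbf I\cap(N(u)\setminus N[v])=\emptyset\bigr)$. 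The $\le D/K$ terms with $u\in I$ contribute a negligible amount, while for $u\in B$ one has $|N(u)\setminus N[v]|\ge d_u-1-D/K\ge(1-o_{K\rightarrow\infty}(1))D/2$ — the single place the minimum-degree hypothesis enters — and the standard self-consistent averaging of the occupancy method bounds $\sum_{u\in B}\PP\bigl(\mathbf I\cap(N(u)\setminus N[v])=\emptyset\bigr)$ in terms of $\lambda$ and $\alpha_G(\lambda)$; solving the resulting transcendental inequality for $\alpha_G(\lambda)$ yields the asserted $\psi$.

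\emph{Conclusion, and the main difficulty.}
Given the occupancy estimate, the independence-number bound is immediate: taking $\lambda$ near the top of the admissible range gives $\alpha(G)\ge N\alpha_G(\lambda)\ge(1-o_{K\rightarrow\infty}(1))\tfrac ND\log K$, which is the classical logarithmic improvement on the independence number of locally sparse graphs. For the count, plugging the estimate into $\log i(G)=N\int_0^1\alpha_G(\lambda)\,\lambda^{-1}\,d\lambda$ and performing the routine integral gives $\log i(G)\ge(\tfrac18+o_{K\rightarrow\infty}(1))\tfrac ND\log^2K$. The main difficulty is the occupancy estimate, and within it the robustness to $\Gamma$ not being independent: in the classical triangle-free case $\Gamma$ is empty and $Z_{\Gamma[\mathbf A]}(\lambda)=(1+\lambda)^{|\mathbf A|}$ on the nose, whereas here one must show that the sparse $B$-part together with the small set $I$ perturb this by only a $(1\pm o_{K\rightarrow\infty}(1))$ factor \emph{uniformly} over the random set $\mathbf A$ and over the fugacities that matter, and that this perturbation survives both the Jensen step and the self-consistent averaging without spoiling the leading constants; it is the accumulation of these small, not-quite-clean losses that accounts for the constant $\tfrac18$ (rather than a larger one).
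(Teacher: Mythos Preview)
Your framework---hard-core model, occupancy fraction, and the identity $\log Z_G(1)=N\int_0^1\alpha_G(\lambda)\,\lambda^{-1}\,d\lambda$---is exactly the one the paper uses for the counting statement. But the paper's execution is much lighter, and differs from yours in two ways worth noting.

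First, the paper uses the partition hypothesis only once, to show in two lines that $2e(\Gamma)=\sum_{u\in B}\deg_\Gamma(u)+\sum_{u\in I}\deg_\Gamma(u)\le |B|\cdot(D/K)+|I|\cdot|\Gamma|\le 2D^2/K$. From there it simply quotes existing black-box results that take as input only the edge bound $e(\Gamma)\le D^2/K$: Hurley--Pirot's chromatic-number bound $\chi(G)\le(1+o(1))D/\log K$ for the independence-number conclusion, and an occupancy lower bound of Davies--de Verclos--Kang--Pirot (written in terms of the Lambert $W$ function and valid for $\lambda\in[\tfrac{\log K}{D},\tfrac{\sqrt K}{D}]$) for the counting conclusion. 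The integration is then a clean change of variables $u=W(D\log(1+\lambda))$ yielding the $\tfrac18\log^2K$ constant. So rather than re-deriving Shearer-type occupancy from scratch, the paper reduces everything to a single edge count and defers the hard analysis to the literature.

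Second, and more substantively: you obtain the independence-number bound by evaluating the occupancy at the top of your admissible $\lambda$-range, but the paper does \emph{not} do this, and it is not clear your route yields the sharp leading constant. The cited occupancy bound is only asserted on $[\tfrac{\log K}{D},\tfrac{\sqrt K}{D}]$, and at $\lambda=\sqrt K/D$ it gives occupancy roughly $\tfrac{1}{2D}\log K$, a factor-of-two loss; your claimed form $\psi(\lambda)=\Theta(\tfrac1D\log(1+\lambda K))$ does not obviously repair this (try $K=D^{1/2}$). The paper sidesteps the issue entirely by going through the chromatic number. Your sketch of a direct occupancy proof is in the spirit of the cited papers, but the assertion $Z_{\Gamma[B\cap\mathbf A]}(\lambda)=(1+\lambda)^{(1-o(1))|\mathbf A|}$ and the ``self-consistent averaging'' step are precisely where the real work lies, and would need to be carried out with care to recover $(1-o(1))$ rather than a weaker constant.
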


\begin{remark}
In some of our applications we have $K=D^{\Theta(1)}$, in which case the second conclusion implies that the average size of an independent set in $G$ is at least $\Omega(1)\cdot \frac{N}{D}\log D$. 
\end{remark}

\begin{proof}[Proof of \cref{thm:locally-sparse}]
As $|\Gamma|\le D$, we get 
\[
2e(\Gamma)=\sum_{v\in B}\deg_{\Gamma}(v)+\sum_{v\in I}\deg_{\Gamma}(v)\le |B|\cdot (D/K)+|I|\cdot |\Gamma|\le 2D^2/K.
\]
Hence $\Gamma$ has average degree at most 
$4D/K$. By a result of Hurley and Pirot \cite[Theorem 2]{HP21}, $G$ has chromatic number at most $(1+o_{K\rightarrow\infty}(1))\frac{D}{\log K}$. It follows that the independence number of $G$ is at least 
$\big(1-o_{K\rightarrow\infty}(1)\big)\frac{N}{D}\log K$,
as desired.

For the second statement, we need to introduce some notation. Let $\cI(G)$ be the collection of independent sets of $G$. The {\em hard-core model on $G$ at fugacity} $\lambda>0$ is a probability distribution on $\cI(G)$, where each $I\in \cI(G)$ occurs with probability proportional to $\lambda^{|I|}$. In other words,
\[
\PP[I]=\frac{\lambda^{|I|}}{\sum_{J\in \cI(G)}\lambda^{|J|}}.
\]
The denominator, $P_G(\lambda)=\sum_{J\in \cI(G)}\lambda^{|J|}$, is the {\em partition function} of the hard-core model on $G$. Note that $P_G(\lambda)$ is an increasing function with $P_G(0)=1$ and $P_G(1)=|\cI(G)|$.

The expected size of an independent set drawn from the hard-core model on $G$ at fugacity $\lambda$ is the scaled logarithmic derivative of the partition function:
\begin{equation}\label{eq:occupancy}
\widebar{\alpha}_G(\lambda)=\sum_{I\in \cI(G)}|I|\cdot \PP[I]=\frac{\sum_{I\in \cI(G)}|I|\lambda^{|I|}}{P_G(\lambda)}=\frac{\lambda P'_G(\lambda)}{P_G(\lambda)}=\lambda \cdot (\log P_G(\lambda))'.
\end{equation}

We need a lower bound on $\widebar{\alpha}_G(\lambda)$ for certain range of $\lambda$, due to Davies et al. \cite{DJKP21}. The lower bound is written in terms of the Lambert W function: for $z>0$, $W(z)$ is the unique positive real satisfying $W(z)e^{W(z)}=z$. Note that $W(z)=(1+o(1))\log z$ as $z\rightarrow \infty$.

Consider a graph $G$ that satisfies the assumptions of \cref{thm:locally-sparse}. Let $\lambda_0=\frac{\log K}{D}$ and $\lambda_1=\frac{\sqrt{K}}{D}$. As $e(\Gamma)\le D^2/K$, Theorem 5 in \cite{DJKP21} shows that for all $\lambda \in [\lambda_0,\lambda_1]$ we have
\[
\frac{1}{N}\widebar{\alpha}_G(\lambda) \ge (1+o(1))\frac{\lambda}{1+\lambda}\frac{W(D\log(1+\lambda))}{D\log(1+\lambda)}.
\]
Combining this with \eqref{eq:occupancy} and letting $u_i=W(D\log(1+\lambda_i))$, we find
\begin{align*}
\log P_G(\lambda_1)-\log P_G(\lambda_0) &\ge \frac{N}{D}\int_{\lambda_0}^{\lambda_1}\frac{W(D\log(1+t))}{(1+t)\log(1+t)}\,dt\\
&=\frac{N}{D}\int_{W(D\log(1+\lambda_0))}^{W(D\log(1+\lambda_1))}(1+u)\,du\\
&=\frac{N}{2D} \, [u_1^2+2u_1-u_0^2-2u_0],
\end{align*}
where the first equality follows from change of variable $u=W(D\log(1+t))$. Using the approximations $D\log(1+\lambda_0)=(1+o(1))\log K$, $D\log(1+\lambda_1)=(1+o(1))\sqrt{K}$, and $W(z)=(1+o(1))\log z$, we have $u_0=(1+o(1))\log\log K$ and $u_1=(\frac12+o(1))\log K$. Therefore, we get
\[
\log P_G(\lambda_1)-\log P_G(\lambda_0) \ge \big(\frac{1}{8}+o(1)\big)\frac{N}{D}\log^2 K.
\]
Since $1\le P_G(\lambda_0) \le P_G(\lambda_1) \le |\cI(G)|$, this gives
$\log |\cI(G)| \ge (\frac{1}{8}+o(1))\frac{N}{D}\log^2 K$, as desired. 
\end{proof}


\section{Improvement on Gilbert--Varshamov bounds}\label{sec:GV-bound}

We present in this section a unified short proofs of improvements on  
sphere-covering bounds on various codes by reducing it to lower bound on independence number of an auxiliary graph. In order to use \cref{thm:locally-sparse}, we need to show that the graph is locally sparse. Our strategy is to split the edge count in the subgraph induced by the neighbourhood of a vertex into two parts, one from vertices from the boundary of the Hamming/Johnson/Euclidean ball, and the other from interior vertices of the ball. The contribution from boundary vertices is exponentially small because the volume of the intersection of balls that are far apart is small as we have shown using~\cref{thm:suff-cond-small-int} and concentration of measure.
On the other hand, the contribution from the interior vertices is also small as there are negligible amount of interior vertices using the growth of the balls in such spaces. 

\begin{proof}[Proof of \cref{thm: existence of code}]
Define a graph $G$ whose vertices are points in the metric space $(X,\mathsf{d})$ and two points are adjacent if their distance is at most $r$. It is easy to see that $G$ has $|X|$ vertices, the degree of every vertex is $\vol(r)-1$, and the maximum size of an $(X,\mathsf{d},r)$-code is the independence number $\alpha(G)$ of $G$. Let $\Gamma$ be a subgraph induced by the neighborhood of an arbitrary vertex
$x\in X$. We partition $V(\Gamma)=B\cup I$, where $I$ is the punctured ball of radius $r-t$ centered at $x$. By the assumption, $\frac{|I|}{\vol(r)}=\frac{\vol(r-t)-1}{\vol(r)}\le e^{-K}$.
Consider any vertex $u\in B$. As $r-t < \mathsf{d}(x,u) \le r$, we obtain
\[
\frac{\deg_{\Gamma}(u)}{\vol(r)}=\frac{\vol(B(x,r)\cap B(u,r))}{\vol(r)} \le e^{-K}.
\]
Therefore, \cref{thm: existence of code} is a realization of \cref{thm:locally-sparse}.
\end{proof}

\begin{proof}[Proof of \cref{thm:q-ary,thm:constant-weight,thm:permutation}]
Each of \cref{lem:volume cube,lem:volume constant-weight,lem:volume permutation} verifies the conditions for each of $q$-ary codes, constant-weight codes and permutation codes
for applying \cref{thm: existence of code}, respectively.
Hence, \cref{thm:q-ary,thm:constant-weight,thm:permutation} all follow from \cref{thm: existence of code}.
\end{proof}


\subsection{Spherical codes}
We need two lemmas for the short proof of~\cref{thm:sph-code}. The first one is a folklore result that partitions the sphere into small pieces of equal measure (see e.g.~\cite[Lemma 21]{FS}).
\begin{lemma}\label{lem: partition-sphere}
For each $\delta \in (0,1)$ the sphere $\mathbb{S}^{n-1}$ can be partitioned into $N=(O(1)/\delta)^n$ pieces of equal measure, each of diameter at most $\delta$.
\end{lemma}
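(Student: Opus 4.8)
The plan is to decouple the two requirements --- small diameter and equal measure --- and meet them in two stages: first build a partition of $\mathbb{S}^{n-1}$ with the right number of parts and small diameter but possibly unequal parts, and then refine it so that every part has exactly the same measure. Throughout, $\mu$ denotes the normalized surface measure.

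\emph{Stage 1 (small diameter, right count).} Let $\{x_1,\dots,x_M\}\subseteq\mathbb{S}^{n-1}$ be a maximal $\delta/4$-separated set. By maximality it is also a $\delta/4$-net, so every point of the sphere lies within distance $\delta/4$ of some $x_i$; and $\delta/4$-separation makes the geodesic balls of radius $\delta/8$ about the $x_i$ pairwise disjoint. Each such ball has $\mu$-measure at least $(c\delta)^{n-1}$ for an absolute constant $c>0$, so comparing with $\mu(\mathbb{S}^{n-1})=1$ gives $M\le (C/\delta)^{n}$ for an absolute constant $C$ --- the wasteful exponent $n$ instead of $n-1$ is precisely what lets us absorb the dimension-dependent factors in this comparison. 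Let $V_1,\dots,V_M$ be the Voronoi cells of the net (ties broken by index); they partition $\mathbb{S}^{n-1}$, and since the net is a $\delta/4$-cover we have $V_i\subseteq \overline{B}(x_i,\delta/4)$, hence $\operatorname{diam}(V_i)\le \delta/2$.

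\emph{Stage 2 (equalizing the measures).} Fix $N$, an integer of the form $(O(1)/\delta)^{n}$, to be pinned down at the end. I would first perturb the partition $(V_i)$ into a partition $(V_i')$ with $\operatorname{diam}(V_i')<\delta$ and $\mu(V_i')$ an integer multiple of $1/N$ for every $i$; then, since $\mu$ is non-atomic, subdivide each $V_i'$ into $N\mu(V_i')$ subsets of measure exactly $1/N$, producing $N$ equal pieces in total, each of diameter at most $\operatorname{diam}(V_i')<\delta$. To construct $(V_i')$ one must move at most $1/N$ of measure into or out of each cell in order to reach multiples of $1/N$; this is done by transferring thin measurable slabs across the shared $(n-2)$-dimensional faces of adjacent Voronoi cells, each slab kept inside a sufficiently thin tube around its face that the diameter of every modified cell stays below $\delta$. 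Which slabs are moved is dictated by routing the rounding defects $\mu(V_i)-\mu(V_i')$ as a flow on the (connected) adjacency graph of the Voronoi diagram.

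\emph{Main obstacle.} The real work is entirely in Stage 2, and specifically in the flow routing: a single $(n-2)$-face can absorb a transfer of only $\Theta(\delta^{n-1})$ of measure before its cell grows past diameter $\delta$, so a naive spanning-tree routing of the defects would need $N$ as large as $\delta^{-2(n-1)}$, overshooting $(O(1)/\delta)^{n}$ once $n\ge 3$. The fix is to route efficiently, using that the adjacency graph of a $\delta$-net on $\mathbb{S}^{n-1}$ behaves like an $(n-1)$-dimensional grid on $\asymp\delta^{-(n-1)}$ vertices and therefore has combinatorial diameter $\asymp\delta^{-1}$: spreading each defect of size $\le 1/N$ along a shortest path makes every edge carry only $O(1/(N\delta))$, which is $O(\delta^{n-1})$ as soon as $N\gtrsim\delta^{-n}=(O(1)/\delta)^{n}$. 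A slicker alternative that bypasses the routing is to apply the Aurenhammer, Hoffmann, and Aronov power-diagram theorem to the net points $x_i$ with uniform targets $1/N$, obtaining cells of measure exactly $1/N$; because the net is $\delta$-dense the corresponding power weights have oscillation $O(\delta^{2})$, so each power cell still lies within $O(\delta)$ of its site and the diameter bound survives after replacing $\delta$ by a fixed constant multiple of itself. I would present whichever of the two is cleaner to make fully rigorous.
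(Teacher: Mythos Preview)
The paper does not actually prove this lemma; it quotes it as folklore and cites Feige--Schechtman \cite[Lemma~21]{FS}. So there is no in-paper argument to match against, and your proposal has to stand on its own.

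Your Stage~1 is correct, but both Stage~2 routes have real gaps. In option (a), the sentence ``spreading each defect of size $\le 1/N$ along a shortest path makes every edge carry only $O(1/(N\delta))$'' is not justified: routing a single defect along a path places its full mass $\le 1/N$ on \emph{every} edge of that path, and there are $M\asymp\delta^{-(n-1)}$ defects whose paths may overlap arbitrarily. What you need is a congestion bound on the maximum edge load, and that does not follow from the graph having combinatorial diameter $O(\delta^{-1})$; it would require a genuine multicommodity-flow argument. In option (b), you invoke a Euclidean power-diagram theorem on the sphere and assert without proof that the resulting weights oscillate by only $O(\delta^{2})$; this is a quantitative stability estimate for semi-discrete optimal transport and is far from free. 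A much more elementary redistribution avoids both issues: take a spanning tree of the Voronoi adjacency graph, process cells from the leaves up, let each cell $V_j$ absorb the remainders $r_i<1/N$ from its children and then pass its own remainder $r_j<1/N$ to its parent using a \emph{fresh} subset of the original $V_j$ (possible once $1/N\le\min_i\mu(V_i)$, which holds for $N=(O(1)/\delta)^{n}$). By induction the mass received from each child lies inside that child's original cell, so every modified cell is contained in the union of $V_j$ with its tree-children and hence in a ball of radius $O(\delta)$ about $x_j$; starting from a $\delta/6$-net and subdividing each modified cell by non-atomicity then gives the partition.
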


The second one is an Euclidean version of results from \cref{sec:small volume}. For a measurable set $A\subset \mathbb{S}^{n-1}$, let $s(A)$ denote the normalized surface area of $A$. Recall that $s_n(\theta)$ is the normalized surface area of a spherical cap of angular radius $\theta$. It is well known that for fixed angle $\theta \in (0,\pi/2)$
\begin{equation}\label{eq:cap}
 s_n(\theta)=\frac{1+o(1)}{\sqrt{2\pi n}}\cdot \frac{\sin^{n-1}\theta}{\cos \theta}.  
\end{equation}

We need a parameter $q_{\theta}$, which is the angular radius of the smallest cap  containing the intersection of two spherical caps of angular radius $\theta$ whose centers are at angle $\theta$. It is straightforward to compute that
\begin{equation}\label{eq:q-theta}
    q_{\theta}=\arcsin\Big(\frac{\sqrt{(\cos\theta-1)^2(1+2\cos\theta)}}{\sin\theta}\Big).
\end{equation}
\begin{lemma}[{\cite[Lemma 6]{JJP18}}]\label{lem:intersection-cap}
   Let $x \in \mathbb{S}^{n-1}$ and $A\subset C_{\theta}(x)$ be measurable with $s(A)>0$. Then
   \[
   \underset{\bm{u}\sim A}{\E}[s(C_{\theta}(\bm{u})\cap A)] \le 2 \cdot s_n(q_{\theta}),
   \]
where $q_{\theta}$ is as in~\eqref{eq:q-theta}.
\end{lemma}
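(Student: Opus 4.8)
The plan is to recast the statement as an estimate on a symmetric double integral over $A\times A$, reduce to the case that $A$ is a spherical cap centred at $x$, and finish with an explicit computation for caps. Write $\mu$ for the uniform probability measure on $\mathbb{S}^{n-1}$, so $s(\cdot)=\mu(\cdot)$. Unfolding the expectation,
\begin{align*}
\underset{\bm u\sim A}{\E}\big[s(C_\theta(\bm u)\cap A)\big]
&=\frac{1}{s(A)}\int_A\int_A \mathbbm{1}\{\langle u,v\rangle\ge\cos\theta\}\,d\mu(u)\,d\mu(v)\\
&=\frac{N(A)}{s(A)},
\end{align*}
where $N(A):=(\mu\times\mu)\big(\{(u,v)\in A\times A:\langle u,v\rangle\ge\cos\theta\}\big)$. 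Hence it suffices to prove $N(A)\le 2\,s(A)\,s_n(q_\theta)$. Note it is essential to use that \emph{both} points lie in $A\subseteq C_\theta(x)$: bounding $C_\theta(\bm u)\cap A$ merely by $C_\theta(\bm u)\cap C_\theta(x)$ is too lossy, since the latter can have measure as large as $s_n(\theta)$ when $\bm u$ is close to $x$.

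Next I would reduce to the case that $A$ is a cap around $x$. The functional $A\mapsto N(A)$ is of Riesz-rearrangement type with the symmetric, angle-decreasing kernel $\mathbbm{1}\{\langle u,v\rangle\ge\cos\theta\}$, so replacing $A$ by its spherical symmetrization $A^{\ast}$ about the axis $\mathbb{R}x$ (a rotationally symmetric set with the same latitude-wise measure) preserves $s(A)$ and does not decrease $N(A)$; thus $N(A)/s(A)\le N(A^{\ast})/s(A^{\ast})$. Among rotationally symmetric subsets of $C_\theta(x)$, the ratio $N(\cdot)/s(\cdot)$ is in turn maximised by a cap $C_\psi(x)$: viewing a symmetric set as a union of latitude-shells, the marginal gain in $N/s$ from including the shell at angle $\phi$ is governed by the probability that two uniform points at angles $\phi$ and $\phi'$ from $x$ are within angle $\theta$, which is decreasing in $\phi$, so the optimal set of latitudes is an initial interval $[0,\psi]$. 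It therefore remains to verify $N\big(C_\psi(x)\big)\le 2\,s_n(\psi)\,s_n(q_\theta)$ for all $\psi\in(0,\theta]$.

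For $\psi\le\theta/2$ every pair of points of $C_\psi(x)$ is within angle $\theta$, so $N(C_\psi(x))=s_n(\psi)^2\le s_n(\theta/2)\,s_n(\psi)\le s_n(q_\theta)\,s_n(\psi)$, using $\theta/2\le q_\theta$. For $\psi\in(\theta/2,\theta]$ one writes $N(C_\psi(x))=\int_{C_\psi(x)}\mu\big(C_\theta(v)\cap C_\psi(x)\big)\,d\mu(v)$; since a uniform random point of a cap concentrates exponentially near its boundary as $n\to\infty$, this equals $(1+o(1))\,s_n(\psi)\cdot\mu\big(C_\theta(v^{\ast})\cap C_\psi(x)\big)$ for $v^{\ast}$ at angle exactly $\psi$ from $x$, and a spherical-trigonometry calculation — this is exactly where the formula~\eqref{eq:q-theta} for $q_\theta$ enters — bounds $\mu\big(C_\theta(v^{\ast})\cap C_\psi(x)\big)$ by $(1+o(1))\,s_n(q_\theta)$; the spare factor $2$ absorbs the $1+o(1)$ terms.

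I expect the reduction to caps to be the main obstacle: justifying the spherical symmetrization step (the cap/Baernstein–Taylor symmetrization, conveniently obtained by iterating two-point polarizations and passing to a limit) and checking that all the monotonicities point in the claimed direction takes some care, and the cap computation for $\psi\in(\theta/2,\theta]$ is routine but not short. An alternative to the rearrangement is to work with $A$ directly via the change of variables sending a pair $(u,v)$ with $\langle u,v\rangle\ge\cos\theta$ to its spherical midpoint $w$ — so that $u$ and $v$ are reflections of each other across $w$ and $w\in C_\theta(x)$ by geodesic convexity of the cap — but one then has to track the Jacobian $2(2\cos\angle(v,w))^{n-2}$ and carefully exploit the reflection constraint; this is essentially the route of Jenssen, Joos and Perkins~\cite{JJP18}.
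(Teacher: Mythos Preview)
The paper does not prove this lemma --- it is cited from~\cite{JJP18} --- so there is no in-paper proof to compare against. Your symmetrization route is genuinely different from the midpoint/reflection change of variables that~\cite{JJP18} uses (and that you allude to at the end), and it can in principle be made to work, but as written it has real gaps.

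First, the symmetrization step is muddled: a ``rotationally symmetric set with the same latitude-wise measure'' as $A$ does not make sense as a genuine set (a subset of $\mathbb{S}^{n-1}$ invariant under all rotations fixing $x$ contains each latitude shell entirely or not at all), and the subsequent ``marginal gain'' heuristic for why caps are optimal among such sets is not a proof. What you actually want is the one-step cap symmetrization $A^{\ast}=C_\psi(x)$ with $s_n(\psi)=s(A)$; Baernstein--Taylor then gives $N(A)\le N(A^{\ast})$ directly and you are reduced to caps with no further argument. Second, and more seriously, the crux is the unproved assertion that $s\big(C_\theta(v^{\ast})\cap C_\psi(x)\big)\le (1+o(1))\,s_n(q_\theta)$ for every $\psi\in(\theta/2,\theta]$ with $v^{\ast}$ at angle $\psi$ from $x$. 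This is true, but it \emph{is} the content of the lemma and cannot be waved away as ``a spherical-trigonometry calculation'': writing $c=\cos\theta$ and $p=\cos\psi$, the asymptotic exponent of the lens measure is governed by $\rho^2=\min(y_1^2+y_2^2)$ over the planar region $\{y_1\ge p,\ p\,y_1+\sqrt{1-p^2}\,y_2\ge c\}$, and one must actually verify that $\rho^2\ge\tfrac{2c^2}{1+c}=\cos^2 q_\theta$ for all $p\in[c,1)$, with equality precisely at $p=c$ (i.e.\ $\psi=\theta$). Finally, your argument is inherently asymptotic in $n$ (boundary concentration, $1+o(1)$ factors to be absorbed by the $2$); this suffices for the paper's application but does not deliver the lemma as stated for all~$n$.
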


\begin{proof}[Proof of~\cref{thm:sph-code}]
Choose $\delta\ll_{\theta,n} 1$, that is, $\delta$ is less than a suitable function of $\theta$ and $n$. Apply Lemma~\ref{lem: partition-sphere} to partition the unit sphere into $N=(O(1)/\delta)^n$ pieces $P_1,\ldots,P_N$ of equal measure, each with diameter at most $\delta$. For each $i \in [N]$, pick an arbitrary point $v_i$ from $P_i$. Let $G$ be a graph with vertex set being these $N$ chosen points, and two vertices form an edge if the angle between them is less than $\theta$. Then by definition, $A(n,\theta)\ge \alpha(G)$. We first use a packing/covering argument to show that every vertex in $G$ has degree $(1+o(1)) s_{n}(\theta)N$. Write $N[x]:=N(x)\cup\{x\}$ for the closed neighborhood of $x$.

\begin{claim}\label{claim:covering}
For every $x \in V(G)$, 
\[
C_{\theta-2\delta}(x) \subset \bigcup_{\bm{v}_i \in N[x]}P_i \subset C_{\theta+2\delta}(x).
\]
\end{claim}
\begin{poc}
We only prove the first inclusion. Let $y$ be any point in $C_{\theta-2\delta}(x)$, that is, the angle between $y$ and $x$ is at most $\theta-2\delta$. As the $P_i$'s cover the sphere, there exists an index $i$ such that $y \in P_i$. By the assumption on $P_i$, we have $||y-v_i|| \le \delta$. Thus, the angle between $v_i$ and $y$ is $2\arcsin(||y-v_i||/2) \le 2\arcsin(\delta/2) < 2\delta$. It follows from the triangle inequality that the angle between $v_i$ and $x$ is less than $2\delta+(\theta-2\delta)=\theta$, implying $v_i\in N[x]$. Therefore, for every $y\in C_{\theta-2\delta}(x)$ we must have $y \in \bigcup_{v_i \in N[x]}P_i$, as desired.
\end{poc}

Let $x$ be an arbitrary vertex of $G$. Since the $P_i$'s are disjoint subsets of $\mathbb{S}^{n-1}$ of normalized surface area $1/N$, \cref{claim:covering} gives $s_n(\theta-2\delta)N \le |N[x]| \le s_n(\theta+2\delta)N$. Moreover, it follows from \eqref{eq:cap} that $s_n(\theta\pm 2\delta)=(1+O(\delta))^ns_n(\theta)=(1+o(1))s_n(\theta)$. Therefore, every vertex in $G$ has degree $D:=(1+o(1))s_n(\theta)N$. 

Let $K=\frac{s_n(\theta)}{4s_n(q_{\theta})}$. By \eqref{eq:cap}, we obtain $\log K=(1+o(1))\log\frac{\sin\theta}{\sin q_{\theta}}\cdot n=(1+o(1))c_{\theta}\cdot n$.
It suffices to show that we can apply~\cref{thm:locally-sparse} with this choice of $K$. This amounts to proving that for any $x\in V(G)$, the average degree of $G[N(x)]$ is at most $D/K$. For this, we view the average degree of $G[N(x)]$ probabilistically as the expected degree of a uniform random vertex in $N(x)$. 

We partition $N[x]=B\cup I$, where $I=\{v_i\colon P_i\subset C_{\theta}(x)\}$. From \cref{claim:covering}, we know that $\bigcup_{v_i \in B}P_i$ is contained in $C_{\theta+2\delta}(x)\setminus C_{\theta-2\delta}(x)$. Thus, $\delta\ll_{\theta,n} 1$, the number of boundary point is 
\[
|B| \le \big(s_{n}(\theta+2\delta)-s_n(\theta-2\delta)\big)N=O(\delta n)s_{n}(\theta)N=o(D/K),
\]
which is negligible. So it suffices to estimate the average degree of $G[I]$.

Let  $A=\bigcup_{v_i\in I}P_i$, and let $\bm{u}$ be a uniform random point in $A$. Now, as each vertex in $G$ corresponds to a piece of the sphere with the same measure, we can generate $\bm{v}_i\sim I$ by rounding $\bm{u}$ to the vertex $\bm{v_i}$ such that $\bm{u}\in P_i$. Thus, we have by~\cref{lem:intersection-cap} that
$$
\underset{\bm{v}_i\sim I}{\E}[\deg_{G[I]}(\bm{v}_i)]= \underset{\bm{u}\sim A}{\E}[s(C_{\theta}(\bm{u})\cap A)]\cdot N\le 2s_n(q_{\theta})N\le D/K,$$ 
as desired.
\end{proof}


\section{List-decodability of random codes}\label{sec:list-decoding}

In this section, we prove \cref{thm:list size}, which states that a uniformly chosen random code of rate $1-h_q(p)-\eps$ is with high probability {\em not} $(p,(1-o(1))/\eps)$-list decodable. In large part we follow the proof of Guruswami and Narayanan \cite[Theorem 20]{GN14}. As in \cite{GN14} we define a random variable $\bm{W}$ that counts the number of witnesses that certify
the violation of the $(p,L)$-list decodability property. Thus the code is $(p,L)$-list decodable if and only if $\bm{W}=0$. So our job becomes to bound the probability of the event that $\bm{W}=0$. For this we employ the Chebyshev's inequality
\[
\PP(\bm{W}=0) \le \frac{\Var[\bm{W}]}{\E[\bm{W}]^2}.
\]
We then show that $\Var[\bm{W}]/\E[\bm{W}]^2$ is exponentially small, which would finish the proof. To bound the variance, we introduce a new ingredient (\cref{lem:covariance}), whose proof relies crucially on our bound on intersection volume from \cref{lem:volume cube}.

\noindent {\bf Notation.} For the rest of this section, we shall employ the following notation. Given $a \in [q]^n$ and $r\in \mathbb{N}$, we write $\cB_q(a,r)$ for the Hamming ball of radius $r$ centered at $a$. Recall that $\vol_q(n,r)$ is the volume of a radius-$r$ Hamming ball in $[q]^n$, and $\vol_q(n,r;k)$ stands for the volume of the intersections of two radius-$r$ balls whose centers are distance $k$ apart.

\begin{lemma}\label{lem:covariance}
Let $0<p<1-1/q$, $1 \le \ell \le L$ and $\mu:=q^{-n}\vol_q(n,pn)$. There exists a constant $c=c_{p,q}>0$ such that the following holds. Let
\[
\bm{a},\bm{b}, \enskip \x_1,\ldots,\x_{\ell}, \enskip \y_{\ell+1},\ldots,\y_L, \enskip \z_{\ell+1},\ldots,\z_L
\]
be chosen independently and uniformly at random from $[q]^n$. Denote by $\cE_{\ell}$ the event
\[
\Big\{ \x_1,\ldots, \x_{\ell} \in \cB_q(\bm{a},pn)\cap \cB_q(\bm{b},pn), \enskip \y_{\ell+1},\ldots,\y_L\in \cB_q(\bm{a},pn), \enskip \z_{\ell+1},\ldots,\z_L\in \cB_q(\bm{b},pn)\Big\}.
\]
Then
\[
\PP(\cE_{\ell}) \le \min \Big\{\mu^{2L-\ell+1},q^{-n}\mu^{2L-\ell}\left(1+2(q-1)q^{-c\ell}\right)^n\Big\}.
\]
\end{lemma}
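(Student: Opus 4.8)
The plan is to compute $\PP(\cE_\ell)$ by first conditioning on the pair $(\bm a,\bm b)$, in particular on $k:=\Delta(\bm a,\bm b)$, and then using independence of the remaining points. Given $\bm a,\bm b$, the points $\y_{\ell+1},\dots,\y_L$ each land in $\cB_q(\bm a,pn)$ with probability $\mu$, and similarly the $\z_j$'s land in $\cB_q(\bm b,pn)$ with probability $\mu$; each $\x_i$ lands in $\cB_q(\bm a,pn)\cap\cB_q(\bm b,pn)$ with probability $q^{-n}\vol_q(n,pn;k)$. By independence, $\PP(\cE_\ell\mid \bm a,\bm b)=\mu^{2(L-\ell)}\big(q^{-n}\vol_q(n,pn;k)\big)^{\ell}$. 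Writing $\rho_k:=\vol_q(n,pn;k)/\vol_q(n,pn)\le 1$ for the normalized intersection volume, this is $\mu^{2L-\ell}\rho_k^{\ell}$.

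For the first bound $\mu^{2L-\ell+1}$, I would use the trivial estimate $\vol_q(n,pn;k)\le \vol_q(n,pn)$, which gives $\PP(\cE_\ell\mid\bm a,\bm b)\le \mu^{2L-\ell}\cdot\big(q^{-n}\vol_q(n,pn)\big)=\mu^{2L-\ell+1}$ for every value of $k$; averaging over $(\bm a,\bm b)$ preserves this. For the second, sharper bound I would now average over $k$ using its exact distribution: $\Delta(\bm a,\bm b)$ is a sum of $n$ i.i.d.\ indicator variables, so $\PP(\Delta(\bm a,\bm b)=k)=\binom nk(1-1/q)^k q^{-(n-k)}$. Hence
\[
\PP(\cE_\ell)=\mu^{2L-\ell}\sum_{k=0}^n\binom nk\Big(\frac{q-1}{q}\Big)^k q^{-(n-k)}\rho_k^{\ell}
=\mu^{2L-\ell}\,q^{-n}\sum_{k=0}^n\binom nk\,(q-1)^k\,\rho_k^{\ell}.
\]
Now invoke \cref{lem:volume cube}, specifically~\eqref{eq:constant}: there is $c_0=c_{0,p,q}>0$ with $\rho_k\le 2e^{-c_0 k}$, and one can absorb the factor $2$ (for $k$ bounded it is harmless, for $k$ large $2e^{-c_0k}\le e^{-c_0k/2}$ say) to get $\rho_k\le q^{-ck}$ for a suitable $c=c_{p,q}>0$ and all $k\ge 0$; for $k=0$ note $\rho_0=1=q^{-c\cdot 0}$. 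Therefore $\rho_k^{\ell}\le q^{-c\ell k}\le q^{-c\ell k}$, wait—more carefully, since $\ell\ge1$ and $\rho_k\le 1$ we have $\rho_k^\ell\le \rho_k\le q^{-ck}$, but to keep the $\ell$-dependence advertised in the statement I would instead bound $\rho_k^\ell \le q^{-c\ell k}$ only when that is what the exponent $q^{-c\ell}$ in the claim demands; in fact $\rho_k^\ell\le q^{-c\ell k}$ follows from $\rho_k\le q^{-ck}$ directly. Plugging in,
\[
\sum_{k=0}^n\binom nk (q-1)^k \rho_k^{\ell}\le \sum_{k=0}^n\binom nk\big((q-1)q^{-c\ell}\big)^k=\big(1+(q-1)q^{-c\ell}\big)^n\le\big(1+2(q-1)q^{-c\ell}\big)^n,
\]
by the binomial theorem, which yields $\PP(\cE_\ell)\le \mu^{2L-\ell}q^{-n}\big(1+2(q-1)q^{-c\ell}\big)^n$ as required.

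The only genuinely delicate point is the reduction of \cref{lem:volume cube}'s conclusion ``$\rho_k\le 2e^{-\Omega_{p,q}(1)\Delta(a,b)}$'' to the clean form ``$\rho_k\le q^{-ck}$ for all $k\ge 0$'' with a single constant $c=c_{p,q}$; this is routine but must be done honestly, handling the leading constant $2$ and the $k=0$ base case, and noting that the $\Omega_{p,q}(1)$ rate is uniform in $n$ and in $k$ (which is exactly the strength of \cref{lem:volume cube}, valid for all $1\le k\le n$, emphasized after \cref{thm:suff-cond-small-int}). Everything else is exact computation: the conditional factorization via independence, the exact hypergeometric-type distribution of $\Delta(\bm a,\bm b)$, and the binomial theorem. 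I do not anticipate any obstacle beyond bookkeeping once the uniform exponential bound on $\rho_k$ is in hand.
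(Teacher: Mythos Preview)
Your argument for the second bound is essentially the same as the paper's: condition on $k=\Delta(\bm a,\bm b)$, use the exponential intersection estimate from \cref{lem:volume cube}, and close with the binomial theorem. That part is fine (modulo the bookkeeping with the factor~$2$, which you correctly flag).

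The gap is in your derivation of the first bound $\mu^{2L-\ell+1}$. You have, correctly,
\[
\PP(\cE_\ell\mid \bm a,\bm b)=\mu^{2L-\ell}\,\rho_k^{\ell},\qquad \rho_k=\frac{\vol_q(n,pn;k)}{\vol_q(n,pn)}.
\]
You then assert that the trivial bound $\vol_q(n,pn;k)\le \vol_q(n,pn)$ yields $\PP(\cE_\ell\mid \bm a,\bm b)\le \mu^{2L-\ell}\cdot q^{-n}\vol_q(n,pn)=\mu^{2L-\ell+1}$ \emph{for every value of $k$}. This is false: the trivial bound gives only $\rho_k\le 1$, hence $\PP(\cE_\ell\mid \bm a,\bm b)\le \mu^{2L-\ell}$, and no extra factor of $\mu$ appears. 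Indeed, when $k=0$ one has $\rho_0=1$ and the conditional probability equals $\mu^{2L-\ell}>\mu^{2L-\ell+1}$, so the pointwise claim cannot hold.

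The paper obtains the missing factor by a symmetry (``swap'') argument rather than a pointwise bound: since $\x_1\in \cB_q(\bm a,pn)\cap\cB_q(\bm b,pn)$ is equivalent to $\bm a,\bm b\in\cB_q(\x_1,pn)$, one conditions on $\x_1$ first and picks up $\mu^2$ from the independent events $\bm a\in\cB_q(\x_1,pn)$ and $\bm b\in\cB_q(\x_1,pn)$; the remaining $\x_2,\dots,\x_\ell,\y_j,\z_j$ contribute $\mu^{L-1}\cdot\mu^{L-\ell}$. Equivalently, within your framework one can salvage the bound by averaging, not bounding pointwise: using $\rho_k^\ell\le \rho_k$ and the identity
\[
\E_{\bm a,\bm b}\big[\rho_{\Delta(\bm a,\bm b)}\big]=\frac{1}{\mu}\,\PP\big(\x\in \cB_q(\bm a,pn)\cap\cB_q(\bm b,pn)\big)=\frac{\mu^2}{\mu}=\mu,
\]
which is exactly the same swap trick in disguise. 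Either way, the extra factor of $\mu$ comes from exploiting the symmetry between $\x_1$ and the centers, not from the inclusion $\cB_q(\bm a,pn)\cap\cB_q(\bm b,pn)\subseteq\cB_q(\bm a,pn)$.
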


\begin{remark}
A version of \cref{lem:covariance}, for the case $q=2$, appeared as \cite[Lemma A.5]{LW21}. The proof of \cite[Lemma A.5]{LW21}, however, does not extend to larger $q$.
\end{remark}

\begin{proof}[Proof of \cref{lem:covariance}]
We first show that the probability of $\cE_{\ell}$ is at most $\mu^{2L-\ell+1}$. For the event $\cE_{\ell}$ to occur, one must have (i) $\bm{a},\bm{b} \in \cB_q(\x_1,pn)$, (ii) $\x_2,\ldots,\x_{\ell},\y_{\ell+1},\ldots,\y_{L}\in \cB_q(\bm{a},pn)$, and (iii) $\z_{\ell+1},\ldots,\z_{L}\in \cB_q(\bm{b},pn)$. Note that the events (i), (ii), (iii) are independent. Conditioned on the position of $\x_1$, (i) occurs with probability $\mu^2$. Given $\bm{a}$ and $\bm{b}$, (ii) and (iii) happen with probability $\mu^{L-1}$ and $\mu^{L-\ell}$, respectively. It follows that $\PP(\cE_{\ell}) \le \mu^{2L-\ell+1}$. 

For the other bound, we first apply the law of total probability to get
\[
\PP(\cE_{\ell})=\sum_{k=0}^n\PP(\Delta(\bm{a},\bm{b})=k)\cdot\PP(\cE_{\ell}\big|\Delta(\bm{a},\bm{b})=k).
\]
Since there are $\binom{n}{k}(q-1)^k$ codewords $b\in [q]^n$ which are at distance $k$ from $a\in [q]^n$, the probability that $\Delta(\bm{a},\bm{b})=k$ is exactly $q^{-n}\binom{n}{k}(q-1)^k$. Conditioned on the positions of $\bm{a}$ and $\bm{b}$ being distance $k$ apart, the probability that $\x_1,\ldots, \x_{\ell} \in \cB_q(\bm{a},pn)\cap \cB_q(\bm{b},pn)$ is $\left(\frac{\vol_q(n,pn;k)}{q^n}\right)^{\ell}=\left(\frac{\vol_q(n,pn;k)}{\vol_q(n,pn)}\right)^{\ell}\mu^{\ell}$. The probability that $\y_{\ell+1},\ldots,\y_L\in \cB_q(\bm{a},pn)$ is $\mu^{L-\ell}$, and the probability that $\z_{\ell+1},\ldots,\z_L\in \cB_q(\bm{a},pn)$ is $\mu^{L-\ell}$. Thus, we have 
\begin{align*}
\PP(\cE_{\ell}\big|\Delta(\bm{a},\bm{b})=k)&=\left(\frac{\vol_q(n,pn;k)}{\vol_q(n,pn)}\right)^{\ell}\mu^{\ell}\cdot \mu^{L-\ell}\cdot \mu^{L-\ell}\\
&=\left(\frac{\vol_q(n,pn;k)}{\vol_q(n,pn)}\right)^{\ell}\mu^{2L-\ell}.
\end{align*}
Therefore, we get the following for some $c=c_{p,q}$ as in \cref{lem:volume cube}.
\begin{align*}
\PP(\cE_{\ell})&=\sum_{k=0}^n q^{-n}\binom{n}{k}(q-1)^k\cdot \left(\frac{\vol_q(n,pn;k)}{\vol_q(n,pn)}\right)^{\ell}\mu^{2L-\ell}\\
(\text{by \cref{lem:volume cube}}) &\le q^{-n}\mu^{2L-\ell}\sum_{k=0}^n\binom{n}{k}(q-1)^k\cdot (2q^{-ck})^{\ell}\\
&=q^{-n}\mu^{2L-\ell}\left(1+2(q-1)q^{-c\ell}\right)^n,
\end{align*}
as desired.
\end{proof}

We are now ready to prove \cref{thm:list size}.

\begin{proof}[Proof of \cref{thm:list size}]
Let $c$ be the positive constant given by \eqref{eq:constant}. Let
\[
\mu:=q^{-n}\vol_q(n,pn), \quad \ell_0:=\frac{1-h_q(p)}{2\eps}, \quad \gamma:=\frac{4(q-1)}{\ln q}\cdot q^{-c\ell_0}, \quad \text{and}\quad L=\frac{1-\gamma}{\eps}.
\]
From \cref{lem:Hammingball-volume}, and recalling that $R=1-h_q(p)-\eps$, we get
\begin{equation}\label{eq:asymptotic-mu}
\mu= \frac{\Theta(1)}{\sqrt{n}}\cdot q^{-(1-h_q(p))n} \quad \text{and} \quad q^{Rn}\mu=\frac{\Theta(1)}{\sqrt{n}}\cdot q^{-\eps n}.    
\end{equation}
Notice that a random $q$-ary code of rate $R$ is simply a random map $\bm{\cC}\colon [q]^{Rn} \rightarrow [q]^n$ where, for each $x \in [q]^{Rn}$, its image $\bm{\cC}(x)$ is chosen independently and uniformly at random from $[q]^n$. For any center $a \in [q]^n$ and any ordered list of $L$ distinct messages $X=(x_1,\ldots,x_L)\in ([q]^{Rn})^L$, we define $\bm{\I}(a,X)$ to be the indicator random variable for the event that $\bm{\cC}(x_1),\ldots, \bm{\cC}(x_L)$ all fall in $\cB_q(a,pn)$, and let $\bm{W}=\sum_{a,X}\bm{\I}(a,X)$. Then $\bm{\cC}$ is $(p,L-1)$-list decodable if and only if $\bm{W}=0$.

We have $\E[\bm{\I}(a,X)]=\PP\big\{\bm{\cC}(x_1),\ldots, \bm{\cC}(x_L)\in  \cB_q(a,pn)\big\}=\mu^L$ and the number of pairs $(a,X)$ is $q^n\cdot \prod\limits_{i=0}^{L-1}(q^{Rn}-i) \ge q^{n}\cdot\frac12 q^{RnL}$. Thus, by linearity of expectation, 
\begin{equation}\label{eq:expectation}
\E[\bm{W}]\ge \frac12 \mu^Lq^{RnL+n}.    
\end{equation}

Observe that if $X$ and $Y$ are two disjoint lists (viewed as sets), then the events $\bm{\I}(a,X)$ and $\bm{\I}(b,Y)$ are independent for any pair of centers $a,b$. It follows that
\begin{align*}
\Var[\bm{W}]&=\sum_{X\cap Y\ne \varnothing}\sum_{a,b} \Big(\E[\bm{\I}(a,X)\bm{\I}(b,Y)]-\E[\bm{\I}(a,X)]\cdot\E[\bm{\I}(b,Y)]\Big)\\
&\le \sum_{X\cap Y\ne \varnothing}\sum_{a,b} \E[\bm{\I}(a,X)\bm{\I}(b,Y)]\\
&=\sum_{\ell=1}^L\sum_{|X\cap Y|=\ell}\sum_{a,b}\PP\big\{\bm{\I}(a,X)=1 \enskip \text{and} \enskip \bm{\I}(b,Y)=1\big\}\\
&=\sum_{\ell=1}^L\sum_{|X\cap Y|=\ell}q^{2n}\cdot \PP_{\bm{a},\bm{b},\bm{\cC}}\big\{\bm{\I}(\bm{a},X)=1 \enskip \text{and} \enskip \bm{\I}(\bm{b},Y)=1\big\},
\end{align*}
where in the last equality we converted the inner summation into an expectation by randomizing over the centers $a$ and $b$. 

Fix a pair $(X,Y)$ with $|X\cap Y|=\ell$, and suppose that the elements of $\bm{\cC}(X)$ are $\x_1,\ldots,\x_{\ell},\y_{\ell+1},\ldots,\y_L$ while the elements of $\bm{\cC}(Y)$ are $\x_1,\ldots,\x_{\ell},\z_{\ell+1},\ldots,\z_L$. Then the event $\big\{\bm{\I}(\bm{a},X)=1 \enskip \text{and} \enskip \bm{\I}(\bm{b},Y)=1\big\}$ is exactly the event $\cE_{\ell}$ in \cref{lem:covariance}. Thus, we can bound the variance of $\bm{W}$ as
\begin{align*}
\Var[\bm{W}] &\le \sum_{\ell=1}^L\sum_{|X\cap Y|=\ell}q^{2n}\cdot \PP(\cE_{\ell})\\
&\le \sum_{\ell=1}^L L^{2L}q^{Rn(2L-\ell)+2n}\cdot \PP(\cE_{\ell}),
\end{align*}
where the second inequality stems from the fact that the number of pairs $(X,Y)$ with $|X\cap Y|=\ell$ is at most $L^{2L}q^{Rn(2L-\ell)}$. We split the summation into $\ell \le \ell_0$ and $\ell>\ell_0$, and get $\Var[\bm{W}]\le V_{\le \ell_0}+V_{>\ell_0}$. From \cref{lem:covariance} and \eqref{eq:expectation}, we find 
\begin{align*}
\frac{V_{\le \ell_0}}{\E[\bm{W}]^2}&\le \frac{4}{\mu^{2L}q^{2RnL+2n}}\sum_{\ell=1}^{\ell_0} L^{2L}q^{Rn(2L-\ell)+2n}\cdot \mu^{2L-\ell+1}\\
&=4L^{2L}\sum_{\ell=1}^{\ell_0}(q^{Rn}\mu)^{-\ell}\cdot \mu\\
(\text{by \eqref{eq:asymptotic-mu}}) &= \Theta(1)\cdot \left(\sqrt{n}q^{\eps n}\right)^{\ell_0} \cdot \frac{\Theta(1)}{\sqrt{n}}q^{-(1-h_q(p))n}\\
(\text{as} \enskip \ell_0 =\frac{1-h_q(p)}{2\eps})&=q^{-\Omega(n)}. 
\end{align*}
Again by appealing to \cref{lem:covariance} and \eqref{eq:expectation}, we see that 
\begin{align*}
\frac{V_{> \ell_0}}{\E[\bm{W}]^2}&\le \frac{4}{\mu^{2L}q^{2RnL+2n}}\sum_{\ell_0<\ell \le L} L^{2L}q^{Rn(2L-\ell)+2n}\cdot q^{-n}\mu^{2L-\ell}\left(1+2(q-1)q^{-c\ell}\right)^n\\
&=4L^{2L}\sum_{\ell_0<\ell \le L}(q^{Rn}\mu)^{-\ell}\cdot \left(\frac{1+2(q-1)q^{-c\ell}}{q}\right)^n\\ 
(\text{by the choice of} \enskip \gamma) & \le 4L^{2L}\sum_{\ell_0<\ell \le L}(q^{Rn}\mu)^{-\ell}\cdot q^{-(1-\gamma/2)n}\\
(\text{by \eqref{eq:asymptotic-mu}}) &=\Theta(1)\cdot \left(\sqrt{n}q^{\eps n}\right)^{L} \cdot q^{-(1-\gamma/2)n}\\
(\text{since} \enskip L= \frac{1-\gamma}{\eps})&=q^{-\Omega(n)}. 
\end{align*}
Putting everything together, we get from Chebyshev's inequality that
\[
\PP(\bm{W}=0) \le \frac{\Var[\bm{W}]}{\E[\bm{W}]^2} \le \frac{V_{\le \ell_0}+V_{>\ell_0}}{\E[\bm{W}]^2} \le  q^{-\Omega(n)}.
\]
Since $\cC$ is $(p,L-1)$-list decodable if and only if $\bm{W}=0$, we conclude that $\bm{\cC}$ is with probability $1-q^{-\Omega(n)}$ not $(p,L-1)$-list decodable.
\end{proof}

\section*{Acknowledgement}
We would like to thank Benny Sudakov for bringing~\cite{KLV04} to our attention.

\end{document}